%% file: main-COLT.tex
\documentclass[final,12pt]{colt2026} 
\usepackage[T1]{fontenc}
\usepackage{color}
\usepackage{babel}
\usepackage{float}
\usepackage{amsmath}
\usepackage{amssymb}

\makeatletter

\floatstyle{ruled}
\newfloat{algorithm}{tbp}{loa}
\providecommand{\algorithmname}{Algorithm}
\floatname{algorithm}{\protect\algorithmname}

\newtheorem{assumption}{\protect\assumptionname}
\newtheorem{lem}{\protect\lemmaname}
\newtheorem{thm}{\protect\theoremname}
\newtheorem{cor}{\protect\corollaryname}
\newtheorem{rem}{\protect\remarkname}

\@ifundefined{date}{}{\date{}}

\usepackage{enumitem}
\setlist[itemize]{leftmargin=*}
\setlist[enumerate]{leftmargin=*}

\makeatother

\providecommand{\assumptionname}{Assumption}
\providecommand{\corollaryname}{Corollary}

\providecommand{\lemmaname}{Lemma}
\providecommand{\remarkname}{Remark}
\providecommand{\theoremname}{Theorem}

\renewenvironment{proof}[1][\proofname]%
{%
 \par\noindent{\bfseries\upshape #1\ }%
}%
{\jmlrQED}

\title[Random Reshuffling Dominates Stochastic Gradient Descent]{Random Reshuffling Dominates Stochastic Gradient Descent}
\usepackage{times}

\coltauthor{%
 \Name{Zijian Liu} \Email{zl3067@stern.nyu.edu}\\
 \addr Stern School of Business, New York University
}

\begin{document}
\global\long\def\E{\mathbb{E}}%
\global\long\def\N{\mathbb{N}}%
\global\long\def\R{\mathbb{R}}%
\global\long\def\bx{\mathbf{x}}%
\global\long\def\by{\mathbf{y}}%
\global\long\def\bzero{\mathbf{0}}%
\global\long\def\defeq{\triangleq}%
\global\long\def\RR{\textsf{RR}}%
\global\long\def\SS{\textsf{SS}}%
\global\long\def\IG{\textsf{IG}}%
\global\long\def\SGD{\textsf{SGD}}%
\global\long\def\ShufflingSGD{\textsf{Shuffling SGD}}%
\global\long\def\Lavg{\bar{L}}%
\global\long\def\Lmax{\hat{L}}%
\global\long\def\mydots{\dots}%
\global\long\def\Breg{\mathrm{B}}%
\global\long\def\Order{\textsf{Order}}%
\global\long\def\Old{\textsf{Old}}%
\global\long\def\New{\textsf{New}}%

\maketitle
\begin{abstract}
Stochastic Gradient Descent ($\textsf{SGD}$) is one of the most classical optimization algorithms with favorable theoretical guarantees, yet the practical implementation of $\textsf{SGD}$ differs subtly from its well-known form and is often referred to as Shuffling Stochastic Gradient Descent ($\textsf{Shuffling SGD}$). A particularly popular strategy in $\textsf{Shuffling SGD}$ is Random Reshuffling ($\textsf{RR}$), which has achieved great empirical success across numerous experiments. Despite its strong performance, $\textsf{RR}$ has long been considered a heuristic due to a lack of theoretical support. Over the last decade, people have finally established provable convergence rates for $\textsf{RR}$, thus justifying its observed superiority. However, for smooth convex optimization, two clouds over the convergence theory of $\textsf{RR}$ remain to this day. More precisely, according to the current theory, $\textsf{Shuffling SGD}$ under $\textsf{RR}$ converges only when the stepsize is smaller than a threshold proportional to $1/n$, where $n$ is the number of summands in the objective (or the number of data points). Consequently, the optimally tuned theoretical rate of $\textsf{Shuffling SGD}$ under $\textsf{RR}$ is strictly worse than that of $\textsf{SGD}$ when the number of epochs is smaller than another threshold proportional to $n$. These two restrictions heavily limit the applicability of existing theories and leave a critical mismatch with practice. In this work, for the first time, we prove that $\textsf{RR}$ dominates $\textsf{SGD}$ in smooth convex optimization under any reasonable stepsize after any finite number of epochs, thereby addressing a longstanding open question.
\end{abstract}{\footnotetext{In this work, we say that one optimization algorithm dominates another if the order of its convergence rate is no worse than that of the latter and is strictly better in certain regimes.}}

\begin{keywords}%
  Convex Optimization, Stochastic Optimization, Random Reshuffling%
\end{keywords}

\input{introduction.tex}

\input{preliminary.tex}
\input{algorithm.tex}

\input{conclusion.tex}

\section*{Acknowledgments}

The author thanks the anonymous reviewers for their valuable feedback.

\bibliography{ref}

\clearpage

\appendix
\input{appendix.tex}

\end{document}

%% file: introduction.tex
\section{Introduction\label{sec:Introduction}}

One of the fundamental tasks in machine learning is to optimize functions
in a finite-sum form, i.e., $f(\bx)\defeq\frac{1}{n}\sum_{i=1}^{n}f_{i}(\bx)$.
Among different optimization algorithms, Stochastic Gradient Descent
($\SGD$), proposed in the seminal work of \citet{10.1214/aoms/1177729586},
is arguably one of the most classical methods. Due to its easy implementation
and computational efficiency, $\SGD$ is particularly popular when
$n$ is large, the standard case nowadays. More importantly, the convergence
guarantees of $\SGD$ have been extensively studied, yielding provable
rates in various settings \citep{Solr-KOHA-OAI-TEST:19722,doi:10.1137/16M1080173,lan2020first},
thereby providing a theoretical backbone for $\SGD$.

However, compared with the standard form of $\SGD$ analyzed in theory,
which uniformly samples a function to perform a gradient descent step
at each iteration, the practical implementation differs subtly and
is often referred to as Shuffling Stochastic Gradient Descent ($\textsf{Shuffling SGD}$).
In $\ShufflingSGD$, the optimization procedure is divided into $K$
epochs, and within each epoch, the order in which functions are processed
is determined by a permutation $\pi$ of $\left\{ 1,\mydots,n\right\} $.
A widely implemented strategy for generating $\pi$ is Random Reshuffling
($\RR$), which, in each epoch, independently and uniformly draws
a new permutation from all possible ones.

Although $\ShufflingSGD$ under $\RR$ has achieved great empirical
success across numerous experiments, it has long been considered a
heuristic due to a lack of theoretical support. Over the last decade,
beginning with the pioneering work of \citet{gurbuzbalaban2021random},
people have finally established provable convergence rates for $\textsf{RR}$,
thereby justifying its observed superiority over standard $\SGD$.

In particular, for smooth convex optimization (i.e., each $f_{i}$
is convex and $L$-smooth\footnote{For simplicity, we adopt a uniform smoothness parameter in the discussion,
as in most of the existing literature.}), $\RR$ with a constant stepsize\footnote{We also use a constant stepsize in the discussion for convenience.}
$\eta$ is known to converge in expectation at the rate $\frac{D^{2}}{\eta nK}+\eta^{2}nL\sigma_{\star}^{2}$
(e.g., \citet{NEURIPS2020_c8cc6e90,JMLR:v22:20-1238}), provided that
the stepsize satisfies $\eta\lesssim\frac{1}{nL}$, where $D$ denotes
the distance between the initial point and the optimal solution, and
$\sigma_{\star}^{2}$ is the gradient variance at the minimizer. In
comparison, $\SGD$ under the same setting guarantees the in-expectation
convergence rate $\frac{D^{2}}{\eta nK}+\eta\sigma_{\star}^{2}$ \citep{garrigos2023handbook}
but only requires $\eta\lesssim\frac{1}{L}$. Clearly, $\RR$ converges
faster than $\SGD$ in the regime $\eta\lesssim\frac{1}{nL}$, which
has been recognized as theoretical evidence demonstrating the strong
performance of $\RR$.

Despite the progress discussed above, some important issues remain
unaddressed. The most critical longstanding open question is that
people still do not understand what happens to $\RR$ when $\eta$
falls into the regime $\eta\gtrsim\frac{1}{nL}$. This point is critical
because, given that $n$ is typically large in modern tasks, the threshold
$\frac{1}{nL}$ can be extremely small or even vanish, whereas stepsizes
used in practice are usually at a constant level, thereby leaving
a significant gap between theory and practice. More crucially, even
if one temporarily assumes that the existing rate for $\RR$ mentioned
earlier could be extended to allow $\eta\lesssim\frac{1}{L}$ (though
no such theory has been established), it would still fail to explain
the advantage of $\RR$ over $\SGD$, as the term $\eta^{2}nL\sigma_{\star}^{2}$
for $\RR$ is worse than the term $\eta\sigma_{\star}^{2}$ for $\SGD$
when $\eta\gtrsim\frac{1}{nL}$. This hints that an analysis different
from existing ones may be needed.

Another issue implied by the above discussion is that the optimally
tuned rate for $\RR$ induced by the existing bound is only $\frac{LD^{2}}{K}+(\frac{L\sigma_{\star}^{2}D^{4}}{nK^{2}})^{\frac{1}{3}}$,
which is better than the best tuned rate $\frac{LD^{2}}{nK}+\frac{\sigma_{\star}D}{\sqrt{nK}}$
for $\SGD$ only when $K$ is larger than a threshold proportional
to $n$. Moreover, in the case of $\sigma_{\star}=0$ (i.e., all $f_{i}$'s
share a common optimal solution), the rate of $\RR$ reduces to only
$\frac{LD^{2}}{K}$, which is even worse than the $\frac{LD^{2}}{nK}$
rate of $\SGD$ by a factor of $\frac{1}{n}$.

The above restrictions on the stepsize $\eta$ or the number of epochs
$K$ heavily limit the applicability of existing theories and cannot
fully explain the favorable performance of $\ShufflingSGD$ under
$\RR$ compared with standard $\SGD$. Therefore, we are naturally
led to the following question:
\begin{center}
\textit{In smooth convex optimization, does $\RR$ dominate $\SGD$
without these two restrictions?}
\par\end{center}

\subsection{Our Contributions}

This work provides an affirmative answer to the above question.
\begin{itemize}
\item Concretely, we show that $\ShufflingSGD$ under $\RR$ provably converges
at a rate of $\frac{D^{2}}{\eta nK}+\min\left\{ 1,\eta nL\right\} \eta\sigma_{\star}^{2}$
for any stepsize satisfying $\eta\lesssim\frac{1}{L}$ (see Theorem
\ref{thm:main-RR-cvx} for the formal version with nonuniform smoothness
parameters and dynamic stepsizes that depend on the epoch number).
We highlight that this rate is not only the first provable result
for $\RR$ that allows $\eta\gtrsim\frac{1}{nL}$, but also provably
dominates the $\frac{D^{2}}{\eta nK}+\eta\sigma_{\star}^{2}$ bound
of $\SGD$ under any stepsize $\eta\lesssim\frac{1}{L}$. It is noteworthy
that this rate is not merely a simple extension of the previously
best bound for $\RR$, since the latter is slower than $\SGD$ when
$\eta\gtrsim\frac{1}{nL}$, as discussed before.
\item Consequently, the optimally tuned rate for $\ShufflingSGD$ under
$\RR$ is improved to $\frac{LD^{2}}{nK}+\min\{\frac{\sigma_{\star}D}{\sqrt{nK}},(\frac{L\sigma_{\star}^{2}D^{4}}{nK^{2}})^{\frac{1}{3}}\}$
(see Corollary \ref{cor:main-RR-cvx-constant-stepsize} for the formal
version with nonuniform smoothness parameters), which dominates the
best tuned bound $\frac{LD^{2}}{nK}+\frac{\sigma_{\star}D}{\sqrt{nK}}$
of $\SGD$ for any finite $K$. Moreover, the rate reduces to $\frac{LD^{2}}{nK}$
when $\sigma_{\star}=0$, improving upon the best known result $\frac{LD^{2}}{K}$
by a factor of $\frac{1}{n}$.
\end{itemize}
In summary, for the first time, we prove that $\textsf{RR}$ dominates
standard $\textsf{SGD}$ in smooth convex optimization under any reasonable
stepsize after any finite number of epochs, resolving a longstanding
open question.

\subsection{Related Work}

We provide a brief overview of $\ShufflingSGD$ under $\RR$ and defer
further details to Appendix \ref{sec:Additional-Related-Work}.

Over the past decades, the effectiveness of $\RR$ has been reported
in many works (e.g., \citet{bottou2009curiously,Bottou2012,Bengio2012}).
However, the theoretical understanding of it has long lagged behind.
The first breakthrough is by \citet{gurbuzbalaban2021random}, which
provides the first theoretical evidence that $\RR$ can beat $\SGD$
in smooth strongly convex optimization under certain additional assumptions.
Since then, $\RR$ has been extensively studied (e.g., \citet{pmlr-v97-nagaraj19a,pmlr-v97-haochen19a}).
To date, the best known rate in smooth convex optimization is $\frac{D^{2}}{\eta nK}+\eta^{2}nL\sigma_{\star}^{2}$
under the condition $\eta\lesssim\frac{1}{nL}$ \citep{NEURIPS2020_c8cc6e90,JMLR:v22:20-1238}.
The only existing lower bound in smooth convex optimization is $(\frac{L\sigma_{\star}^{2}D^{4}}{nK^{2}})^{\frac{1}{3}}$,
due to \citet{pmlr-v202-cha23a}, which holds for constant stepsizes
$\eta\lesssim\frac{1}{nL}$ and a large number of epochs at least
satisfying $K\gtrsim\frac{nL^{2}D^{2}}{\sigma_{\star}^{2}}$.

%% file: preliminary.tex
\section{Preliminary}

\paragraph{Notation.}

$\N$ denotes the set of natural numbers (excluding $0$). Given $n\in\N$,
we write $\left[n\right]\defeq\left\{ 1,\mydots,n\right\} $. $\left\langle \cdot,\cdot\right\rangle $
is the standard Euclidean inner product, and $\left\Vert \cdot\right\Vert \defeq\sqrt{\left\langle \cdot,\cdot\right\rangle }$
is the $\ell_{2}$ norm. Given a real-valued differentiable function
$h:\R^{d}\to\R$, $\nabla h(\bx)$ denotes the gradient at $\bx\in\R^{d}$.
The Bregman divergence induced by $h$ is defined as $\Breg_{h}(\bx,\by)\defeq h(\bx)-h(\by)-\left\langle \nabla h(\by),\bx-\by\right\rangle $,
which is nonnegative if $h$ is additionally convex.

\paragraph{Objective.}

We study the following finite-sum optimization problem in this work
\[
\inf_{\bx\in\R^{d}}f(\bx)\defeq\frac{1}{n}\sum_{i=1}^{n}f_{i}(\bx),
\]
where $n\in\N$ and each $f_{i}:\R^{d}\to\R$ is differentiable.
\begin{rem}
To ease notation, hereinafter we use $\Breg\defeq\Breg_{f}$ and $\Breg_{i}\defeq\Breg_{f_{i}}$
to denote the Bregman divergences induced by $f$ and $f_{i}$, respectively. 
\end{rem}

\paragraph{Assumptions.}

Our analysis relies on the following three assumptions.
\begin{assumption}[Minimizer]
\label{assu:minimizer}$\exists\bx_{\star}\in\R^{d}$ such that $f_{\star}\defeq f(\bx_{\star})=\inf_{\bx\in\R^{d}}f(\bx)\in\R$.
\end{assumption}
\begin{assumption}[Convexity]
\label{assu:convexity}Each $f_{i}$ is convex.
\end{assumption}
\begin{assumption}[Smoothness]
\label{assu:smoothness}Each $f_{i}$ is $L_{i}$-smooth, i.e., $\exists L_{i}>0$
such that $\left\Vert \nabla f_{i}(\bx)-\nabla f_{i}(\by)\right\Vert \leq L_{i}\left\Vert \bx-\by\right\Vert ,\forall\bx,\by\in\R^{d}$.
\end{assumption}
All three of the above assumptions are standard and commonly adopted
in the literature \citep{Solr-KOHA-OAI-TEST:19722,nesterov2018lectures,doi:10.1137/16M1080173,lan2020first}.
Notably, we do not impose any assumptions on the difference between
the individual gradient $\nabla f_{i}$ and the full gradient $\nabla f$,
such as the popular finite variance condition.

Next, we introduce three more notations to simplify the expressions
in the subsequent sections. $\sigma_{\star}^{2}$ denotes the variance
of the gradient at the minimizer $\bx_{\star}$, and $\Lavg$ (resp.
$\Lmax$) represents the average (resp. maximum) smoothness parameter,
i.e.,
\begin{eqnarray*}
\sigma_{\star}^{2}\defeq\frac{1}{n}\sum_{i=1}^{n}\left\Vert \nabla f_{i}(\bx_{\star})\right\Vert ^{2}, & \Lavg\defeq\frac{1}{n}\sum_{i=1}^{n}L_{i}, & \Lmax\defeq\max_{i\in\left[n\right]}L_{i}.
\end{eqnarray*}
We note that the quantity $\sigma_{\star}^{2}$ is widely used in
prior works on shuffling gradient methods (e.g., \citet{8514028,NEURIPS2020_c8cc6e90,JMLR:v22:20-1238})
and remains invariant even when $f$ has multiple minimizers (see
Lemma 4.17 of \citet{garrigos2023handbook}). In particular, $\sigma_{\star}^{2}=0$
corresponds to the case in which all $f_{i}$ share a common optimal
solution.

To finish this section, we state a classical result in convex optimization,
known as the co-coercivity property of smooth convex functions, which
serves as a key tool in our analysis. As for its proof, see, for example,
Theorem 2.15 of \citet{nesterov2018lectures}.
\begin{lem}[Co-coercivity]
\label{lem:co-coercivity-cvx}Let $h:\R^{d}\to\R$ be a differentiable
convex function that is also $L$-smooth, then we have, for any $\bx,\by\in\R^{d}$,
\begin{eqnarray*}
\left\Vert \nabla h(\bx)-\nabla h(\by)\right\Vert ^{2}\leq2L\Breg_{h}(\bx,\by) & \text{and} & \left\Vert \nabla h(\bx)-\nabla h(\by)\right\Vert ^{2}\leq L\left\langle \nabla h(\bx)-\nabla h(\by),\bx-\by\right\rangle .
\end{eqnarray*}
\end{lem}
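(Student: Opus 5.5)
The plan is to reduce both inequalities to the one-dimensional behaviour of $h$ along a segment, via an auxiliary convex function minimized at $\by$. Fix $\by$ and define
\[
\phi(\bx)\defeq h(\bx)-\left\langle \nabla h(\by),\bx\right\rangle ,
\]
so that $\phi(\bx)-\phi(\by)=\Breg_{h}(\bx,\by)$.

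First, $\phi$ is convex and $L$-smooth, since it differs from $h$ by a linear term. Its gradient $\nabla\phi(\bx)=\nabla h(\bx)-\nabla h(\by)$ vanishes at $\bx=\by$, so by convexity $\by$ is a global minimizer of $\phi$.

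Next, I will use the standard descent-lemma bound for $L$-smooth functions,
\[
\phi(\mathbf{z})\le\phi(\bx)+\left\langle \nabla\phi(\bx),\mathbf{z}-\bx\right\rangle +\tfrac{L}{2}\left\Vert \mathbf{z}-\bx\right\Vert ^{2}.
\]
This follows by integrating $\nabla\phi$ along the segment from $\bx$ to $\mathbf{z}$ and applying the Lipschitz bound. Taking $\mathbf{z}=\bx-\frac{1}{L}\nabla\phi(\bx)$ gives
\[
\phi(\by)\le\phi(\mathbf{z})\le\phi(\bx)-\tfrac{1}{2L}\left\Vert \nabla\phi(\bx)\right\Vert ^{2}.
\]
Rearranging yields $\left\Vert \nabla h(\bx)-\nabla h(\by)\right\Vert ^{2}\le2L\Breg_{h}(\bx,\by)$, which is the first inequality.

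For the second inequality, swap the roles of $\bx$ and $\by$ to get $\left\Vert \nabla h(\bx)-\nabla h(\by)\right\Vert ^{2}\le2L\Breg_{h}(\by,\bx)$. Adding this to the first inequality and using
\[
\Breg_{h}(\bx,\by)+\Breg_{h}(\by,\bx)=\left\langle \nabla h(\bx)-\nabla h(\by),\bx-\by\right\rangle
\]
gives $2\left\Vert \nabla h(\bx)-\nabla h(\by)\right\Vert ^{2}\le2L\left\langle \nabla h(\bx)-\nabla h(\by),\bx-\by\right\rangle$. Dividing by $2$ gives the claim.

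The only delicate point is the first step: recognizing that $\by$ globally minimizes $\phi$. This needs convexity and is where that hypothesis enters. Without it, the descent step gives only an upper bound on $\phi(\mathbf{z})$, not a lower bound through $\phi(\by)$. Everything else is routine.
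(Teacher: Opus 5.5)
Your proof is correct. It is the standard argument behind the result the paper cites (Nesterov's Theorem 2.1.5): the tilted function $h(\cdot)-\langle \nabla h(\mathbf{y}),\cdot\rangle$ is globally minimized at $\mathbf{y}$, one gradient step with the descent lemma gives the first inequality, and adding the two Bregman bounds with the roles of $\mathbf{x}$ and $\mathbf{y}$ swapped gives the second. The paper gives no proof of its own beyond that citation.
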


%% file: algorithm.tex
\section{Shuffling Stochastic Gradient Descent}

\begin{algorithm}[H]
\caption{\label{alg:Shuffling-SGD}Shuffling Stochastic Gradient Descent ($\protect\ShufflingSGD$)}

\textbf{Input:} initial point $\bx_{1}^{1}\in\R^{d}$, stepsize $\eta_{k}>0$

\textbf{for} $k=1$ \textbf{to} $K$ \textbf{do}

$\quad$Generate a permutation $\pi_{k}$ of $\left[n\right]$

$\quad$\textbf{for} $i=1$ \textbf{to} $n$ \textbf{do}

$\qquad$$\bx_{k}^{i+1}=\bx_{k}^{i}-\eta_{k}\nabla f_{\pi_{k}^{i}}(\bx_{k}^{i})$

$\quad$\textbf{end for}

$\quad$$\bx_{k+1}^{1}=\bx_{k}^{n+1}$

\textbf{end for}
\end{algorithm}

The method studied in this work, Shuffling Stochastic Gradient Descent
($\ShufflingSGD$), is given in Algorithm \ref{alg:Shuffling-SGD}.
Compared with the standard $\SGD$ algorithm, which uniformly samples
a function to process at each step, $\ShufflingSGD$ determines the
order in which functions are passed in each epoch based on a permutation.
In particular, three strategies for generating permutations are popular
in practice, as illustrated in the following examples, where $S_{n}$
denotes the symmetric group of $\left[n\right]$.
\begin{example}[Random Reshuffling ($\RR$)]
\label{exa:RR}Each $\pi_{k}$ is drawn independently and uniformly
from $S_{n}$.
\end{example}
\begin{example}[Single Shuffling ($\SS$)]
\label{exa:SS}Each $\pi_{k}=\pi$, a permutation drawn uniformly
from $S_{n}$.
\end{example}
\begin{example}[Incremental Gradient ($\IG$)]
\label{exa:IG}Each $\pi_{k}=\pi$, a deterministic permutation from
$S_{n}$. 
\end{example}

\subsection{New Rate for $\protect\RR$}

We are now ready to provide the main result, Theorem \ref{thm:main-RR-cvx},
a new convergence rate for $\RR$.
\begin{thm}
\label{thm:main-RR-cvx}Under Assumptions \ref{assu:minimizer}, \ref{assu:convexity},
and \ref{assu:smoothness}, suppose $\RR$ is employed with $\eta_{k}\leq\frac{1}{6\Lmax},\forall k\in\left[K\right]$,
let
\begin{eqnarray}
\bar{\bx}_{K}\defeq\sum_{k=1}^{K}\sum_{i=1}^{n}\frac{\eta_{k}}{nH_{K}}\bx_{k}^{i} & \text{where} & H_{K}\defeq\sum_{k=1}^{K}\eta_{k},\label{eq:avg-x}
\end{eqnarray}
then $\ShufflingSGD$ (Algorithm \ref{alg:Shuffling-SGD}) guarantees
that
\[
\E\left[f(\bar{\bx}_{K})-f_{\star}\right]\leq\frac{6\left\Vert \bx_{1}^{1}-\bx_{\star}\right\Vert ^{2}}{n\sum_{k=1}^{K}\eta_{k}}+51\min\left\{ \frac{\sum_{k=1}^{K}\eta_{k}^{2}}{\sum_{k=1}^{K}\eta_{k}},\frac{\sum_{k=1}^{K}\eta_{k}^{3}n\Lavg}{\sum_{k=1}^{K}\eta_{k}}\right\} \sigma_{\star}^{2}.
\]
\end{thm}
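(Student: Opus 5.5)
The plan is to prove a one-epoch descent inequality of the form
\[
\E\|\bx_{k+1}^{1}-\bx_\star\|^2\le \E\|\bx_k^1-\bx_\star\|^2-c\,\eta_k\sum_{i=1}^n\E[f(\bx_k^i)-f_\star]+C\min\{\eta_k^2 n,\ \eta_k^3n^2\Lavg\}\sigma_\star^2,
\]
then sum it over $k$ with the weights given in \eqref{eq:avg-x}, divide by $nH_K$, and apply Jensen's inequality to $\bar\bx_K$. The $\min$ in Theorem \ref{thm:main-RR-cvx} appears epoch by epoch as two separate bounds on the same error term. We take the smaller of the two in each epoch; since the displayed bound only needs the minimum of the two sums, it suffices to prove each branch separately and sum.

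For a single step we expand
\[
\|\bx_k^{i+1}-\bx_\star\|^2=\|\bx_k^i-\bx_\star\|^2-2\eta_k\langle\nabla f_{\pi_k^i}(\bx_k^i),\bx_k^i-\bx_\star\rangle+\eta_k^2\|\nabla f_{\pi_k^i}(\bx_k^i)\|^2 .
\]
We use the three-point identity
\[
\langle\nabla f_j(\bx),\bx-\bx_\star\rangle=f_j(\bx)-f_j(\bx_\star)+\Breg_j(\bx_\star,\bx).
\]
For the squared norm we use $\|\nabla f_j(\bx)\|^2\le 2\|\nabla f_j(\bx)-\nabla f_j(\bx_\star)\|^2+2\|\nabla f_j(\bx_\star)\|^2$ together with Lemma \ref{lem:co-coercivity-cvx}, which gives $\le 4L_j\Breg_j(\bx,\bx_\star)+2\|\nabla f_j(\bx_\star)\|^2$. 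Since $\eta_k\le 1/(6\Lmax)$, this Bregman term is absorbed. Summing over the epoch leaves $-2\eta_k\sum_i[f_{\pi^i}(\bx_k^i)-f_{\pi^i}(\bx_\star)]$ plus a term $\eta_k^2\cdot 2\sum_i\|\nabla f_{\pi^i}(\bx_\star)\|^2=2\eta_k^2n\sigma_\star^2$. The difficulty is that $\bx_k^i$ depends on $\pi^1,\dots,\pi^{i-1}$ but not on $\pi^i$. Conditionally on the past, $\pi^i$ is uniform over the remaining indices, not over all of $[n]$, so $\E f_{\pi^i}(\bx_k^i)\neq\E f(\bx_k^i)$.

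To fix this, we compare each $\bx_k^i$ with the epoch's starting point $\bx_k^1$. Exchangeability of $\pi_k$ gives $\E\sum_i[f_{\pi^i}(\bx_k^1)-f_{\pi^i}(\bx_\star)]=n(f(\bx_k^1)-f_\star)$, so the correction is $\sum_i[f_{\pi^i}(\bx_k^i)-f_{\pi^i}(\bx_k^1)]$. We control it using convexity and smoothness of $f_{\pi^i}$ in terms of $\|\bx_k^i-\bx_k^1\|$ and Bregman terms. This is exactly where the two branches differ.

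\emph{Branch 1} ($\eta_k^2 n$ term): here we avoid comparing with $\bx_k^1$ at all. We use $f_{\pi^i}(\bx_k^i)-f_{\pi^i}(\bx_\star)\ge f_{\pi^i}(\bx_k^i)-f_{\pi^i}(\bx_\star)$ and simply lower-bound $\sum_i$ over the epoch via a ``virtual'' point. Concretely, we write $\sum_i f_{\pi^i}(\bx_k^i)\ge \sum_i f_{\pi^i}(\bx_k^{n+1})-\sum_i\langle\nabla f_{\pi^i}(\bx_k^i),\bx_k^{n+1}-\bx_k^i\rangle$-type identities. We then control the cross terms by the same $\eta_k^2\sigma_\star^2$ budget, giving an $\SGD$-like rate $\eta_k^2 n\sigma_\star^2$ with an $f(\bx_k^{n+1})$ (or averaged) descent term.

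\emph{Branch 2} ($\eta_k^3n^2\Lavg$ term): here we follow the classical $\RR$ analysis. The drift is bounded using the without-replacement variance bound
\[
\E\Big\|\sum_{j<i}\nabla f_{\pi^j}(\bx_\star)\Big\|^2=\frac{(i-1)(n-i+1)}{n-1}\sigma_\star^2\le in\sigma_\star^2,
\]
which gives $\E\|\bx_k^i-\bx_k^1\|^2\lesssim \eta_k^2 in\sigma_\star^2+\eta_k^2 i\sum_{j<i}(\text{Bregman terms})$. Multiplying by $\eta_k L_{\pi^i}$ and summing yields $\eta_k^3n^2\Lavg\sigma_\star^2$. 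The weighting by $L_{\pi^i}$ averages to $\Lavg$ because $\pi^i$ is marginally uniform.

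The main obstacle is the $\min$ with $\eta_k\le 1/(6\Lmax)$ rather than $1/(n\Lmax)$ in branch 2. The accumulated Bregman terms in the drift carry a factor $\eta_k^2 i\cdot\eta_k L\sim \eta_k n L$, which is not small when $\eta_k\gtrsim 1/(nL)$. My first attempt is to avoid ever needing branch 2 in that regime: whenever $\eta_k n\Lavg\ge 1$, branch 1 already gives the minimum, so branch 2 only has to hold for $\eta_k\lesssim 1/(n\Lavg)$. The remaining subtlety is that it must hold with $\Lavg$ rather than $\Lmax$. I would resolve this by bounding the drift Bregman terms by $\Breg_{\pi^j}(\bx_k^j,\bx_\star)$ weighted by $L_{\pi^j}$, and absorbing them into the descent term using $\eta_k\le 1/(6\Lmax)$ together with $\eta_k\sum_i L_{\pi^i}\le\eta_k n\Lavg\lesssim 1$. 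This yields the stated constants after summation and Jensen.
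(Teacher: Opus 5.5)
There is a genuine gap, and it sits where the theorem is new. Your top-level structure (two separate bounds combined through the minimum) matches the paper. The problem is branch 1, which your last paragraph makes solely responsible for every epoch with $\eta_kn\Lavg\gtrsim1$.

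\textbf{Branch 1 does not go through.} Your displayed inequality $\sum_i f_{\pi_k^i}(\bx_k^i)\ge\sum_i f_{\pi_k^i}(\bx_k^{n+1})-\sum_i\langle\nabla f_{\pi_k^i}(\bx_k^i),\bx_k^{n+1}-\bx_k^i\rangle$ is the reverse of what convexity gives. It holds only after subtracting the smoothness remainder $\frac{L_{\pi_k^i}}{2}\|\bx_k^{n+1}-\bx_k^i\|^2$, where $\bx_k^{n+1}-\bx_k^i=-\eta_k\sum_{j\ge i}\nabla f_{\pi_k^j}(\bx_k^j)$. The inner-product terms are harmless, since they sum to a nonnegative quantity. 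After multiplying by $2\eta_k$, however, the remainder contributes
\[
\eta_k^3\sum_{i=1}^{n}L_{\pi_k^i}\Big\|\sum_{j\ge i}\nabla f_{\pi_k^j}(\bx_k^j)\Big\|^2 .
\]
This is the classical epoch-level residual.

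\begin{itemize}
\item Its $\bx_\star$-noise part is of order $\eta_k^3n^2\Lavg\sigma_\star^2$, because $\sum_{j\ge i}\nabla f_{\pi_k^j}(\bx_\star)=-\sum_{j<i}\nabla f_{\pi_k^j}(\bx_\star)$ has second moment $\frac{(i-1)(n-i+1)}{n-1}\sigma_\star^2$. This exceeds $\eta_k^2n\sigma_\star^2$ precisely when $\eta_kn\Lavg\gtrsim1$.
\item Its gradient-difference part can only be absorbed under a $1/n$-type stepsize restriction.
\end{itemize}

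So this route gives back the classical branch-2 analysis, not an $\eta_k^2n\sigma_\star^2$ residual for all $\eta_k\le1/(6\Lmax)$. The claim that the cross terms fit in \emph{the same $\eta_k^2\sigma_\star^2$ budget} is asserted, not proved. The underlying issue is generic to comparing $\bx_k^i$ with an epoch endpoint: the drift is a sum of up to $n$ gradient steps.

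\textbf{The missing idea.} You need a comparison whose error does not grow with the distance travelled inside the epoch. The paper uses an exchange coupling. For $j<i$, let $\pi_k(i,j)$ be $\pi_k$ with positions $i$ and $j$ swapped, and let $\bx_k^i(i,j)$ be the $i$-th iterate of epoch $k$ run from $\bx_k^1$ under $\pi_k(i,j)$. Since $\pi_k(i,j)$ has the same law as $\pi_k$, Lemma \ref{lem:core-function} gives the exact identity
\[
\E\left[f(\bx_k^i)-f_{\pi_k^i}(\bx_k^i)\right]=\frac{1}{n}\sum_{j<i}\E\left[f_{\pi_k^i}(\bx_k^i(i,j))-f_{\pi_k^i}(\bx_k^i)\right].
\]
Every gradient step with $\eta_k\le2/\Lmax$ is nonexpansive by co-coercivity, so the two trajectories differ only through step $j$. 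This gives $\|\bx_k^i(i,j)-\bx_k^i\|\le\eta_k\|\nabla f_{\pi_k^i}(\bx_k^j)-\nabla f_{\pi_k^j}(\bx_k^j)\|$, however large $i-j$ is (Lemma \ref{lem:core-stability}).

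Smoothness then yields Lemma \ref{lem:I-core}. Thanks to the $1/n$ weight, its errors sum over an epoch to at most $\frac{7\eta_k}{3}\sum_i\E\|\nabla f_{\pi_k^i}(\bx_k^i)\|^2$. The per-step identity of Lemma \ref{lem:I-descent} absorbs this via co-coercivity when $\eta_k\le1/(6\Lmax)$. The outcome is an $O(\eta_k^2n\sigma_\star^2)$ residual with a descent term in $\sum_i f(\bx_k^i)$, which is exactly what $\bar\bx_K$ requires. Your descent terms in $f(\bx_k^{n+1})$ or $f(\bx_k^1)$ would need a further drift argument to control $f(\bar\bx_K)$.

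\textbf{Branch 2 also needs repair.}
\begin{itemize}
\item Your common per-step expansion leaves $+2\eta_k^2n\sigma_\star^2$. This exceeds the target $\eta_k^3n^2\Lavg\sigma_\star^2$ exactly when $\eta_kn\Lavg\lesssim1$, which is where you use branch 2. Lower-bounding the correction by $-O(L_{\pi_k^i}\|\bx_k^i-\bx_k^1\|^2)$ minus Bregman terms cannot cancel a positive residual.
\item To remove it you must exploit $\sum_i\nabla f_{\pi_k^i}(\bx_\star)=\bzero$. The paper does this by tracking $\|\bx_k^i-\by_k^i\|^2$, with $\by_k^{i+1}=\by_k^i-\eta_k\nabla f_{\pi_k^i}(\bx_\star)$ and $\by_k^1=\bx_\star$ (Lemma \ref{lem:II-descent}).
\item Let $\mathbf{v}_j\defeq\nabla f_{\pi_k^j}(\bx_k^j)-\nabla f_{\pi_k^j}(\bx_\star)$. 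With the unweighted bound $\|\sum_{j<i}\mathbf{v}_j\|^2\le(i-1)\sum_{j<i}\|\mathbf{v}_j\|^2$, your absorbed Bregman terms carry the coefficient $\eta_k^2n^2\Lavg\Lmax$. Under $\eta_kn\Lavg\lesssim1$ and $\eta_k\le1/(6\Lmax)$ this can be of order $n$.
\item The weighted form $\|\sum_{j<i}\mathbf{v}_j\|^2\le(\sum_{j<i}L_{\pi_k^j})\sum_{j<i}\|\mathbf{v}_j\|^2/L_{\pi_k^j}$ would reduce the coefficient to $(\eta_kn\Lavg)^2$.
\end{itemize}
The paper avoids per-epoch switching altogether. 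It applies the same swap coupling to $\ell_i(\bx)=f_i(\bx)-\langle\nabla f_i(\bx_\star),\bx\rangle$ (Lemma \ref{lem:II-core}), so Theorem \ref{thm:II-rate} holds for all $\eta_k\le1/(2\Lmax)$. The stated bound is then simply the minimum of Theorems \ref{thm:I-rate} and \ref{thm:II-rate}.
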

\begin{rem}
We make no effort to optimize the constants in the bounds obtained
in this work.
\end{rem}
\begin{proof}
The proof is deferred to Subsection \ref{subsec:Proof}.
\end{proof}

To the best of our knowledge, in smooth convex optimization, Theorem
\ref{thm:main-RR-cvx} shows the first convergence rate for $\RR$
under any reasonable stepsize, i.e., $\eta_{k}\lesssim1/\Lmax$, thereby
improving existing results in different aspects. Previously, the best
known convergence rate for $\RR$ is $\frac{D^{2}}{n\sum_{k=1}^{K}\eta_{k}}+\frac{\sum_{k=1}^{K}\eta_{k}^{3}n\Lavg\sigma_{\star}^{2}}{\sum_{k=1}^{K}\eta_{k}}$
(where $D\defeq\left\Vert \bx_{1}^{1}-\bx_{\star}\right\Vert $) under
the condition $\eta_{k}\lesssim1/(n\sqrt{\Lavg\Lmax})$ \citep{pmlr-v235-liu24cg,NEURIPS2024_84d39572}.
However, this requirement on $\eta_{k}$ is highly unrealistic, as
$n$ is typically pretty large in modern machine learning tasks, meaning
that the stepsize has to be extremely small or even vanishing. This
contradicts the constant-level stepsizes commonly used in practice
and leads to a gap between theory and practice.

In comparison, Theorem \ref{thm:main-RR-cvx} not only allows the
stepsize to lie in the constant regime $\eta_{k}\lesssim1/\Lmax$
but also shows a fundamental improvement rather than a mere extension
of the previously best known rate, since the latter becomes slower
than $\SGD$ once $\eta_{k}\gtrsim1/(n\Lavg)$ (the rate of $\SGD$
is stated below), while our Theorem \ref{thm:main-RR-cvx} never does. 

In addition, compared with the $\frac{D^{2}}{n\sum_{k=1}^{K}\eta_{k}}+\frac{\sum_{k=1}^{K}\eta_{k}^{2}\sigma_{\star}^{2}}{\sum_{k=1}^{K}\eta_{k}}$
rate of standard $\SGD$ \citep{garrigos2023handbook}, our Theorem
\ref{thm:main-RR-cvx} is never worse for any reasonable stepsize
(i.e., $\eta_{k}\lesssim1/\Lmax$), and is strictly better when $\eta_{k}\lesssim1/(n\Lavg)$.
This feature has an important implication: if both methods employ
their own optimally tuned stepsizes, $\ShufflingSGD$ under $\RR$
provably achieves a better upper bound that dominates the $\frac{\Lmax D^{2}}{nK}+\frac{\sigma_{\star}D}{\sqrt{nK}}$
rate of $\SGD$ after any finite number of epochs, as evidenced by
Corollary \ref{cor:main-RR-cvx-constant-stepsize} below.
\begin{cor}
\label{cor:main-RR-cvx-constant-stepsize}Under the same setting
as in Theorem \ref{thm:main-RR-cvx}, with the optimally tuned constant
stepsize $\eta_{k}=\eta_{\star},\forall k\in\left[K\right]$, where
$\eta_{\star}\leq\frac{1}{6\Lmax}$, $\ShufflingSGD$ (Algorithm \ref{alg:Shuffling-SGD})
guarantees that
\[
\E\left[f(\bar{\bx}_{K})-f_{\star}\right]\lesssim\frac{\Lmax\left\Vert \bx_{1}^{1}-\bx_{\star}\right\Vert ^{2}}{nK}+\min\left\{ \frac{\sigma_{\star}\left\Vert \bx_{1}^{1}-\bx_{\star}\right\Vert }{\sqrt{nK}},\left(\frac{\Lavg\sigma_{\star}^{2}\left\Vert \bx_{1}^{1}-\bx_{\star}\right\Vert ^{4}}{nK^{2}}\right)^{\frac{1}{3}}\right\} .
\]
\end{cor}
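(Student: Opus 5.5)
The plan is to plug the constant stepsize $\eta_k=\eta$ into Theorem \ref{thm:main-RR-cvx} and then optimize over $\eta\in(0,\frac{1}{6\Lmax}]$. Write $D\defeq\left\Vert \bx_{1}^{1}-\bx_{\star}\right\Vert$. With a constant stepsize the sums collapse, $\frac{\sum_k\eta_k^2}{\sum_k\eta_k}=\eta$ and $\frac{\sum_k\eta_k^3 n\Lavg}{\sum_k\eta_k}=\eta^2 n\Lavg$, so the theorem gives
\[
\E\left[f(\bar{\bx}_{K})-f_{\star}\right]\lesssim\frac{D^{2}}{\eta nK}+\min\left\{ \eta,\eta^{2}n\Lavg\right\} \sigma_{\star}^{2}
\]
for every $\eta\le\frac{1}{6\Lmax}$. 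Since the left side does not depend on which branch of the minimum is active, it suffices to show two separate bounds: the right side is at most $\frac{\Lmax D^2}{nK}+\frac{\sigma_\star D}{\sqrt{nK}}$ for one choice of $\eta$, and at most $\frac{\Lmax D^2}{nK}+(\frac{\Lavg\sigma_\star^2D^4}{nK^2})^{1/3}$ for another. Choosing $\eta_\star$ as whichever gives the smaller bound then yields the minimum, because $a+\min\{b,c\}=\min\{a+b,a+c\}$.

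\emph{SGD-type branch.} Bound $\min\{\eta,\eta^2n\Lavg\}\le\eta$ and take
\[
\eta=\min\left\{\frac{1}{6\Lmax},\frac{D}{\sigma_\star\sqrt{nK}}\right\}.
\]
Use $\frac{1}{\min\{a,b\}}\le\frac1a+\frac1b$ on the first term. Then $\frac{D^2}{\eta nK}\le\frac{6\Lmax D^2}{nK}+\frac{\sigma_\star D}{\sqrt{nK}}$, and since $\eta\le\frac{D}{\sigma_\star\sqrt{nK}}$, the variance term satisfies $\eta\sigma_\star^2\le\frac{\sigma_\star D}{\sqrt{nK}}$.

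\emph{RR-type branch.} Bound the minimum by $\eta^2n\Lavg$ and take
\[
\eta=\min\left\{\frac{1}{6\Lmax},\left(\frac{D^2}{n^2K\Lavg\sigma_\star^2}\right)^{1/3}\right\},
\]
which balances $\frac{D^2}{\eta nK}$ against $\eta^2n\Lavg\sigma_\star^2$. The first term is at most $\frac{6\Lmax D^2}{nK}+\frac{D^2}{nK}\left(\frac{n^2K\Lavg\sigma_\star^2}{D^2}\right)^{1/3}=\frac{6\Lmax D^2}{nK}+\left(\frac{\Lavg\sigma_\star^2D^4}{nK^2}\right)^{1/3}$. The variance term is at most $\left(\frac{D^2}{n^2K\Lavg\sigma_\star^2}\right)^{2/3}n\Lavg\sigma_\star^2$, and this equals the same cube-root quantity.

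The only care needed is with degenerate cases. If $\sigma_\star=0$, read both second candidates as $+\infty$, so $\eta=\frac{1}{6\Lmax}$ and the bound is $\frac{6\Lmax D^2}{nK}$, which is consistent. The $D=0$ case is trivial. There is no real obstacle here; the main point is checking the exponent arithmetic in the cube-root balancing.
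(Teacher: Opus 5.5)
Your proposal is correct and matches the paper's proof. The paper specializes Theorem~\ref{thm:main-RR-cvx} to $\eta_k\equiv\eta$, which gives $\frac{6D^2}{\eta nK}+51\min\{\eta,\eta^2 n\bar{L}\}\sigma_\star^2$, and then optimizes over $\eta\in(0,\frac{1}{6\hat{L}}]$ in a single line; you carry out the two branch choices and the cube-root exponent arithmetic explicitly, and these calculations are correct.

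One small quibble: the case $D=0$ with $\sigma_\star>0$ is not quite ``trivial.'' Both of your candidate stepsizes collapse to $0$, so the bound is only approached as $\eta\to 0$. The paper glosses over this degenerate case as well.
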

\begin{proof}
The proof is deferred to Subsection \ref{subsec:Proof}.
\end{proof}

In contrast, the known optimally tuned rate for $\RR$ is only $\frac{\sqrt{\Lavg\Lmax}D^{2}}{K}+(\frac{\Lavg\sigma_{\star}^{2}D^{4}}{nK^{2}})^{\frac{1}{3}}$.
To better understand the differences among these optimally tuned rates,
let $\Order_{\SGD}(K)$ and $\Order_{\RR}^{\Old}(K)$ denote the dominant
terms in the existing optimally tuned rates for $\SGD$ and $\ShufflingSGD$
under $\RR$, respectively, i.e.,
\[
\Order_{\SGD}(K)\defeq\max\left\{ \frac{\Lmax D^{2}}{nK},\frac{\sigma_{\star}D}{\sqrt{nK}}\right\} \enspace\text{and}\enspace\Order_{\RR}^{\Old}(K)\defeq\max\left\{ \frac{\sqrt{\Lavg\Lmax}D^{2}}{K},\left(\frac{\Lavg\sigma_{\star}^{2}D^{4}}{nK^{2}}\right)^{\frac{1}{3}}\right\} .
\]
Similarly, $\Order_{\RR}^{\New}(K)$ denotes the dominant term in
the rate obtained in Corollary \ref{cor:main-RR-cvx-constant-stepsize},
i.e.,
\[
\Order_{\RR}^{\New}(K)\defeq\max\left\{ \frac{\Lmax D^{2}}{nK},\min\left\{ \frac{\sigma_{\star}D}{\sqrt{nK}},\left(\frac{\Lavg\sigma_{\star}^{2}D^{4}}{nK^{2}}\right)^{\frac{1}{3}}\right\} \right\} .
\]

In the nondegenerate case $\sigma_{\star}\neq0$ (which implies that
$n\geq2$), the following comparisons hold:
\[
\begin{cases}
\Order_{\RR}^{\New}(K)=\Order_{\SGD}(K)\overset{(a)}{\leq}\Order_{\RR}^{\Old}(K), & K\leq\frac{n\Lavg^{2}D^{2}}{\sigma_{\star}^{2}},\\
\Order_{\RR}^{\New}(K)<\Order_{\SGD}(K)\overset{(b)}{\leq}\Order_{\RR}^{\Old}(K), & \frac{n\Lavg^{2}D^{2}}{\sigma_{\star}^{2}}<K\leq\frac{n\Lavg\Lmax D^{2}}{\sigma_{\star}^{2}},\\
\Order_{\RR}^{\New}(K)\overset{(c)}{\leq}\Order_{\RR}^{\Old}(K)<\Order_{\SGD}(K), & \frac{n\Lavg\Lmax D^{2}}{\sigma_{\star}^{2}}<K\leq\frac{n\Lavg^{1/2}\Lmax^{3/2}D^{2}}{\sigma_{\star}^{2}},\\
\Order_{\RR}^{\New}(K)=\Order_{\RR}^{\Old}(K)<\Order_{\SGD}(K), & K>\frac{n\Lavg^{1/2}\Lmax^{3/2}D^{2}}{\sigma_{\star}^{2}},
\end{cases}
\]
where $(a)$ becomes an equality if and only if $K=\frac{n\Lavg^{2}D^{2}}{\sigma_{\star}^{2}}$
and $\Lavg=\Lmax$, $(b)$ becomes an equality if and only if $K=\frac{n\Lavg\Lmax D^{2}}{\sigma_{\star}^{2}}$,
and $(c)$ becomes an equality if and only if $K=\frac{n\Lavg^{1/2}\Lmax^{3/2}D^{2}}{\sigma_{\star}^{2}}$.
As one can see, unlike the existing optimally tuned bound for $\RR$
in the literature, which can be slower than standard $\SGD$ when
$K\leq\frac{n\Lavg\Lmax D^{2}}{\sigma_{\star}^{2}}$, the new result
in Corollary \ref{cor:main-RR-cvx-constant-stepsize} is never worse
than the optimally tuned rate of standard $\SGD$ and strictly beats
it once $K>\frac{n\Lavg^{2}D^{2}}{\sigma_{\star}^{2}}$, thereby improving
the threshold from $\frac{n\Lavg\Lmax D^{2}}{\sigma_{\star}^{2}}$
to $\frac{n\Lavg^{2}D^{2}}{\sigma_{\star}^{2}}$.

Moreover, in the special case where all $f_{i}$'s share a common
minimizer, or equivalently when $\sigma_{\star}=0$, Corollary \ref{cor:main-RR-cvx-constant-stepsize}
achieves the same $\frac{\Lmax D^{2}}{nK}$ rate as $\SGD$. However,
the prior bound for $\RR$ can only be reduced to a slower rate in
the order of $\frac{\sqrt{\Lavg\Lmax}D^{2}}{K}$ due to the stepsize
restriction $\eta_{k}\lesssim1/(n\sqrt{\Lavg\Lmax})$, as discussed
before. Formally, once $n\geq2$, we have
\[
\Order_{\RR}^{\New}(K)=\Order_{\SGD}(K)<\Order_{\RR}^{\Old}(K),\forall K\in\N.
\]

In summary, Theorem \ref{thm:main-RR-cvx} and Corollary \ref{cor:main-RR-cvx-constant-stepsize}
together imply that $\RR$ dominates $\SGD$ in smooth convex optimization
under any reasonable stepsize after any finite number of epochs.

\section{Theoretical Analysis}

In this section, we lay the groundwork for proving Theorem \ref{thm:main-RR-cvx}
and complete its proof at the end. The section is organized into four
parts. First, we provide the two most important lemmas in Subsection
\ref{subsec:Core-Lemmas}. Next, in Subsection \ref{subsec:Bound-I},
we establish an upper bound in Theorem \ref{thm:I-rate}, which indicates
that $\ShufflingSGD$ under $\RR$ never converges more slowly than
$\SGD$ under any reasonable stepsize. Then, Theorem \ref{thm:II-rate}
in Subsection \ref{subsec:Bound-II} presents an alternative convergence
rate for $\ShufflingSGD$ under $\RR$, which demonstrates that Algorithm
\ref{alg:Shuffling-SGD} under $\RR$ provably converges faster than
$\SGD$ when the stepsize is sufficiently small. Finally, in Subsection
\ref{subsec:Proof}, we conclude Theorem \ref{thm:main-RR-cvx} and
then use it to prove Corollary \ref{cor:main-RR-cvx-constant-stepsize}.

\subsection{Two Core Lemmas\label{subsec:Core-Lemmas}}

This subsection contains two core lemmas, both of which are critical
to our analysis. 

Before presenting the lemmas, we introduce two notions. Given a permutation
$\pi$ of $\left[n\right]$ and two indices $i,j\in\left[n\right]$
satisfying $j\leq i$, we define the following new permutation
\begin{equation}
\pi(i,j)\defeq\left(\pi^{1},\mydots,\pi^{j-1},\pi^{i},\pi^{j+1},\mydots,\pi^{i-1},\pi^{j},\pi^{i+1},\mydots,\pi^{n}\right).\label{eq:main-virtual-pi}
\end{equation}
In words, $\pi(i,j)$ is the permutation generated by exchanging the
elements $\pi_{i}$ and $\pi_{j}$ in $\pi$. Equipped with the notion
of $\pi(i,j)$, we introduce the following virtual sequence, for any
given $k\in\left[K\right]$,
\begin{eqnarray}
 & \bx_{k}^{l+1}(i,j)\defeq\bx_{k}^{l}(i,j)-\eta_{k}\nabla f_{\pi_{k}^{l}(i,j)}(\bx_{k}^{l}(i,j)),\forall l\in\left[n\right], & \text{where}\enspace\bx_{k}^{1}(i,j)\defeq\bx_{k}^{1}.\label{eq:main-virtual-x}
\end{eqnarray}
This means that the sequence $\bx_{k}^{l}(i,j),\forall l\in\left[n+1\right]$
denotes the trajectory of the $k$-th epoch starting from $\bx_{k}^{1}$,
produced by Algorithm \ref{alg:Shuffling-SGD}, but under the permutation
$\pi_{k}(i,j)$. This virtual iterate can be viewed as a coupled sequence
of the real output and, to the best of our knowledge, was first introduced
by \citet{NEURIPS2021_107030ca}. It plays a fundamental role in our
proof, as will become clear.

With these two concepts in hand, we proceed to state the two core
lemmas. The first is Lemma \ref{lem:core-function}, which is based
on Lemma 2 of \citet{NEURIPS2021_107030ca}. For completeness, we
reproduce the proof of Lemma \ref{lem:core-function} in Appendix
\ref{sec:Core-Lemmas}.
\begin{lem}
\label{lem:core-function}Given an arbitrary finite-sum function $\ell(\bx)=\frac{1}{n}\sum_{i=1}^{n}\ell_{i}(\bx)$,
suppose $\RR$ is employed, then for any $k\in\left[K\right]$ and
$i\in\left[n\right]$, $\ShufflingSGD$ (Algorithm \ref{alg:Shuffling-SGD})
guarantees that
\[
\E\left[\ell(\bx_{k}^{i})-\ell_{\pi_{k}^{i}}(\bx_{k}^{i})\right]=\frac{1}{n}\sum_{j<i}\E\left[\ell_{\pi_{k}^{i}}(\bx_{k}^{i}(i,j))-\ell_{\pi_{k}^{i}}(\bx_{k}^{i})\right],
\]
where $\bx_{k}(i,j)$ is defined in (\ref{eq:main-virtual-x}).
\end{lem}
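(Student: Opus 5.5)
The plan is to condition on everything before epoch $k$ and then use two symmetries of the uniform distribution of $\pi_k$ on $S_n$. Let $\mathcal{F}_{k}$ be the $\sigma$-field generated by $\pi_1,\mydots,\pi_{k-1}$. Under $\RR$, $\bx_k^1$ is $\mathcal{F}_k$-measurable, and $\pi_k$ is uniform on $S_n$ and independent of $\mathcal{F}_k$. Conditionally on $\mathcal{F}_k$, the iterate $\bx_k^i$ is a deterministic function $\Phi_i(\pi_k^1,\mydots,\pi_k^{i-1})$ of the first $i-1$ entries of $\pi_k$. By the definition in (\ref{eq:main-virtual-x}), the virtual iterate is the same function applied to the swapped permutation:
\[
\bx_k^i(i,j)=\Phi_i\bigl(\pi_k^1(i,j),\mydots,\pi_k^{i-1}(i,j)\bigr).
\]
I will prove the identity conditionally on $\mathcal{F}_k$ and then take total expectation. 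I assume throughout that the expectations involved are finite; otherwise the statement is read formally.

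\textbf{Step 1 (split the average).} For $j<i$ write $a_j\defeq\E[\ell_{\pi_k^j}(\bx_k^i)\mid\mathcal{F}_k]$, and put $b\defeq\E[\ell_{\pi_k^i}(\bx_k^i)\mid\mathcal{F}_k]$. Since $\{\pi_k^1,\mydots,\pi_k^n\}=[n]$, we can write
\[
\ell(\bx_k^i)=\frac{1}{n}\sum_{j<i}\ell_{\pi_k^j}(\bx_k^i)+\frac{1}{n}\sum_{j\ge i}\ell_{\pi_k^j}(\bx_k^i).
\]

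\textbf{Step 2 (tail terms).} Take $j\ge i$. The transposition of positions $i$ and $j$ is a bijection of $S_n$, so it preserves the uniform law. It also leaves the first $i-1$ entries unchanged, hence leaves $\bx_k^i$ unchanged. Therefore $(\pi_k^1,\mydots,\pi_k^{i-1},\pi_k^j)$ has the same law as $(\pi_k^1,\mydots,\pi_k^{i-1},\pi_k^i)$, and
\[
\E[\ell_{\pi_k^j}(\bx_k^i)\mid\mathcal{F}_k]=b\quad\text{for every } j\ge i.
\]
The tail sum thus contributes $\frac{n-i+1}{n}\,b$.

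\textbf{Step 3 (head terms, the key step).} Take $j<i$. The map $\pi\mapsto\pi(i,j)$ is an involution on $S_n$, so it preserves the uniform law. Applying it to the random variable $\ell_{\pi_k^i}\bigl(\Phi_i(\pi_k^1,\mydots,\pi_k^{i-1})\bigr)$ replaces $\pi_k$ by $\pi_k(i,j)$. After the swap, the $i$-th entry is $\pi_k^j$, and the first $i-1$ entries are those of $\pi_k(i,j)$, which is exactly what produces $\bx_k^i(i,j)$. Hence
\[
\E[\ell_{\pi_k^j}(\bx_k^i)\mid\mathcal{F}_k]=\E[\ell_{\pi_k^i}(\bx_k^i(i,j))\mid\mathcal{F}_k],
\]
that is, $a_j=\E[\ell_{\pi_k^i}(\bx_k^i(i,j))\mid\mathcal{F}_k]$. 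This is the step that needs care: one must check that the virtual trajectory really is $\Phi_i$ evaluated on the swapped prefix, which holds because both trajectories start from the same $\bx_k^1$.

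\textbf{Step 4 (combine).} Putting the pieces together,
\[
\E[\ell(\bx_k^i)-\ell_{\pi_k^i}(\bx_k^i)\mid\mathcal{F}_k]=\frac{1}{n}\sum_{j<i}a_j+\frac{n-i+1}{n}\,b-b=\frac{1}{n}\sum_{j<i}\bigl(a_j-b\bigr).
\]
The last equality uses $\frac{n-i+1}{n}b-b=-\frac{i-1}{n}b=-\frac{1}{n}\sum_{j<i}b$. Taking total expectation gives the claimed identity.
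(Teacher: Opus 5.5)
Your argument follows the paper's proof. You split $\ell(\bx_k^i)$ into head terms ($j<i$) and tail terms ($j\ge i$). You remove the tail using exchangeability of $\pi_k^j$ and $\pi_k^i$ given the prefix. You rewrite each head term through the law-preserving swap $\pi\mapsto\pi(i,j)$. Conditioning on $\mathcal{F}_k$ only makes explicit what the paper leaves implicit when it asserts that $(\pi_k,\bx_k)$ and $(\pi_k(i,j),\bx_k(i,j))$ are equal in distribution.

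There is, however, a slip in Step 3 that you need to fix. You apply the swap to $\ell_{\pi_k^i}(\bx_k^i)$, and, as your own sentence says, its image is $\ell_{\pi_k^j}(\bx_k^i(i,j))$. So what that step establishes is $b=\E[\ell_{\pi_k^j}(\bx_k^i(i,j))\mid\mathcal{F}_k]$. This is true, but it is not the displayed identity for $a_j$ that Step 4 uses.

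The swap must be applied to the integrand of $a_j$, i.e.\ to $h(\pi_k)$ with $h(\pi)\defeq\ell_{\pi^j}(\Phi_i(\pi^1,\dots,\pi^{i-1}))$. Because $\pi_k^j(i,j)=\pi_k^i$ and the first $i-1$ entries of $\pi_k(i,j)$ generate $\bx_k^i(i,j)$, you get $h(\pi_k(i,j))=\ell_{\pi_k^i}(\bx_k^i(i,j))$. The conditional equality in law of $\pi_k(i,j)$ and $\pi_k$ then gives $a_j=\E[\ell_{\pi_k^i}(\bx_k^i(i,j))\mid\mathcal{F}_k]$. This is the paper's chain $\E[\ell_{\pi_k^j}(\bx_k^i)]=\E[\ell_{\pi_k^j(i,j)}(\bx_k^i(i,j))]=\E[\ell_{\pi_k^i}(\bx_k^i(i,j))]$. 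With this relabeling your proof is complete.
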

In the analysis of shuffling gradient methods, a well-known major
challenge is to properly bound the term $\E\left[f(\bx_{k}^{i})-f_{\pi_{k}^{i}}(\bx_{k}^{i})\right]$,
unlike in $\SGD$, which no longer equals $0$ due to the nature of
shuffling-based algorithms. Lemma \ref{lem:core-function} provides
a possible approach by relating the term we want to control (in a
slightly more general form, applicable to any finite-sum function
$\ell$) to another quantity involving the virtual sequence introduced
earlier in (\ref{eq:main-virtual-x}).

For the convenience of the discussion, temporarily assume $\ell_{i}=f_{i}$
in Lemma \ref{lem:core-function}. Then, under the smoothness assumption,
one would expect the difference between $\ell_{\pi_{k}^{i}}(\bx_{k}^{i}(i,j))$
and $\ell_{\pi_{k}^{i}}(\bx_{k}^{i})$ to be small whenever $\bx_{k}^{i}(i,j)$
and $\bx_{k}^{i}$ are close. This observation naturally leads us
to the other core Lemma \ref{lem:core-stability} stated below.
\begin{lem}
\label{lem:core-stability}Under Assumptions \ref{assu:convexity}
and \ref{assu:smoothness}, suppose $\eta_{k}\leq\frac{2}{\Lmax},\forall k\in\left[K\right]$,
then for any $k\in\left[K\right]$, $i\in\left[n\right]$, and $j\in\left[i-1\right]$,
$\ShufflingSGD$ (Algorithm \ref{alg:Shuffling-SGD}) guarantees that
\[
\left\Vert \bx_{k}^{i}(i,j)-\bx_{k}^{i}\right\Vert \leq\eta_{k}\left\Vert \nabla f_{\pi_{k}^{i}}(\bx_{k}^{j})-\nabla f_{\pi_{k}^{j}}(\bx_{k}^{j})\right\Vert ,
\]
where $\bx_{k}(i,j)$ is defined in (\ref{eq:main-virtual-x}).
\end{lem}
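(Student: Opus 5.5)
The idea is to compare the real trajectory with the virtual one step by step over the window $l=j,\dots,i$, using that gradient steps on smooth convex functions are nonexpansive. By construction $\pi_{k}(i,j)$ agrees with $\pi_{k}$ in positions $1,\dots,j-1$. Since both sequences start at $\bx_{k}^{1}$, we have $\bx_{k}^{l}(i,j)=\bx_{k}^{l}$ for all $l\leq j$, and in particular $\bx_{k}^{j}(i,j)=\bx_{k}^{j}$.

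At step $j$ the real sequence uses $f_{\pi_{k}^{j}}$, while the virtual one uses $f_{\pi_{k}^{j}(i,j)}=f_{\pi_{k}^{i}}$, both evaluated at the same point $\bx_{k}^{j}$. Subtracting the two updates gives
\[
\bx_{k}^{j+1}(i,j)-\bx_{k}^{j+1}=-\eta_{k}\left(\nabla f_{\pi_{k}^{i}}(\bx_{k}^{j})-\nabla f_{\pi_{k}^{j}}(\bx_{k}^{j})\right).
\]
So after step $j$ the distance is exactly $\eta_{k}\left\Vert \nabla f_{\pi_{k}^{i}}(\bx_{k}^{j})-\nabla f_{\pi_{k}^{j}}(\bx_{k}^{j})\right\Vert$, which is the claimed bound.

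For the intermediate steps $l=j+1,\dots,i-1$, the two permutations coincide again: $\pi_{k}^{l}(i,j)=\pi_{k}^{l}$. Hence both sequences apply the same map $T_{l}(\bx)\defeq\bx-\eta_{k}\nabla f_{\pi_{k}^{l}}(\bx)$. I will show that $T_{l}$ is nonexpansive whenever $\eta_{k}\leq2/\Lmax$. For any $\bx,\by$, write $g\defeq\nabla f_{\pi_{k}^{l}}(\bx)-\nabla f_{\pi_{k}^{l}}(\by)$ and $L\defeq L_{\pi_{k}^{l}}$. Then
\[
\left\Vert T_{l}(\bx)-T_{l}(\by)\right\Vert ^{2}=\left\Vert \bx-\by\right\Vert ^{2}-2\eta_{k}\left\langle g,\bx-\by\right\rangle +\eta_{k}^{2}\left\Vert g\right\Vert ^{2}.
\]
The second inequality of Lemma \ref{lem:co-coercivity-cvx} gives $\left\langle g,\bx-\by\right\rangle \geq\frac{1}{L}\left\Vert g\right\Vert ^{2}$. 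Substituting, the right-hand side is at most $\left\Vert \bx-\by\right\Vert ^{2}-\eta_{k}\left(\frac{2}{L}-\eta_{k}\right)\left\Vert g\right\Vert ^{2}$. This is at most $\left\Vert \bx-\by\right\Vert ^{2}$ because $\eta_{k}\leq2/\Lmax\leq2/L$.

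Applying this contraction inductively from $l=j+1$ to $l=i-1$ shows the distance does not grow, so $\left\Vert \bx_{k}^{i}(i,j)-\bx_{k}^{i}\right\Vert \leq\left\Vert \bx_{k}^{j+1}(i,j)-\bx_{k}^{j+1}\right\Vert$. Note that $\bx_{k}^{i}$ is the iterate before step $i$ is applied, so step $i$, where the permutations differ again, never enters. When $i=j+1$ the chain is empty and the bound is immediate. The only point needing care is the index bookkeeping, namely confirming that position $i$ is not used in forming $\bx_{k}^{i}$; there is no real obstacle beyond that.
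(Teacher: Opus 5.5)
Your proposal is correct and follows essentially the same route as the paper: the two trajectories coincide up to index $j$, differ at step $j$ by exactly $\eta_{k}(\nabla f_{\pi_{k}^{i}}(\bx_{k}^{j})-\nabla f_{\pi_{k}^{j}}(\bx_{k}^{j}))$, and the gradient maps for $l=j+1,\dots,i-1$ are nonexpansive by co-coercivity when $\eta_{k}\leq2/\Lmax$. The only cosmetic difference is that you get $\bx_{k}^{l}(i,j)=\bx_{k}^{l}$ for $l\leq j$ directly from the permutations agreeing, whereas the paper derives it from the same nonexpansiveness inequality applied from a zero initial gap.
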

Lemma \ref{lem:core-stability} quantifies how close the virtual iterate
and the true trajectory can be, under the widely required condition
of $\eta_{k}\leq2/\Lmax$ in smooth optimization. Although the inequality
does not directly offer a bound on the distance between $\bx_{k}^{i}(i,j)$
and $\bx_{k}^{i}$ that depends only on deterministic terms (e.g.,
the stepsize $\eta_{k}$), it is sufficient for our proof when combined
with a careful analysis.

The proof of Lemma \ref{lem:core-stability} is given in Appendix
\ref{sec:Core-Lemmas} and relies on the co-coercivity property (i.e.,
Lemma \ref{lem:co-coercivity-cvx}), which is closely related to the
nonexpansiveness of the update rule in gradient methods \citep{Solr-KOHA-OAI-TEST:19722,nesterov2018lectures}.

\subsection{Bound I: Never Worse than SGD under Reasonable Stepsize\label{subsec:Bound-I}}

In this subsection, we give the first bound for $\ShufflingSGD$ under
$\RR$, stated in Theorem \ref{thm:I-rate} below.
\begin{thm}
\label{thm:I-rate}Under Assumptions \ref{assu:minimizer}, \ref{assu:convexity},
and \ref{assu:smoothness}, suppose $\RR$ is employed with $\eta_{k}\leq\frac{1}{6\Lmax},\forall k\in\left[K\right]$,
then $\ShufflingSGD$ (Algorithm \ref{alg:Shuffling-SGD}) guarantees
that
\[
\E\left[f(\bar{\bx}_{K})-f_{\star}\right]\leq\frac{\left\Vert \bx_{1}^{1}-\bx_{\star}\right\Vert ^{2}}{2n\sum_{k=1}^{K}\eta_{k}}+\frac{51\sum_{k=1}^{K}\eta_{k}^{2}\sigma_{\star}^{2}}{\sum_{k=1}^{K}\eta_{k}},
\]
where $\bar{\bx}_{K}$ is defined in (\ref{eq:avg-x}).
\end{thm}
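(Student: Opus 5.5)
The plan is the classical one-step distance recursion for $\SGD$. The one place where shuffling enters, the bias term $\E[f(\bx_k^i)-f_{\pi_k^i}(\bx_k^i)]$, is handled by Lemma \ref{lem:core-function} with $\ell=f$ and then by Lemma \ref{lem:core-stability}.

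\textbf{Step 1: one-step recursion.} Fix an epoch $k$, write $\eta=\eta_k$, $\bx^i=\bx_k^i$, $\pi=\pi_k$. Expanding the update gives
\[
\|\bx^{i+1}-\bx_\star\|^2=\|\bx^i-\bx_\star\|^2-2\eta\langle\nabla f_{\pi^i}(\bx^i),\bx^i-\bx_\star\rangle+\eta^2\|\nabla f_{\pi^i}(\bx^i)\|^2 .
\]
I will use two facts:
\begin{itemize}
\item the identity $\langle\nabla f_{\pi^i}(\bx^i),\bx^i-\bx_\star\rangle=f_{\pi^i}(\bx^i)-f_{\pi^i}(\bx_\star)+\Breg_{\pi^i}(\bx_\star,\bx^i)$;
\item the bound $\|\nabla f_{\pi^i}(\bx^i)\|^2\le 4L_{\pi^i}\Breg_{\pi^i}(\bx_\star,\bx^i)+2\|\nabla f_{\pi^i}(\bx_\star)\|^2$, which follows from Lemma \ref{lem:co-coercivity-cvx}.
\end{itemize}
Since $\eta\le 1/(6\Lmax)$, the Bregman part of the squared gradient is absorbed by the negative term $-2\eta\Breg_{\pi^i}(\bx_\star,\bx^i)$, leaving a surplus $-c\,\eta\Breg_{\pi^i}(\bx_\star,\bx^i)$ with $c>0$ for later use. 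Next I split $f_{\pi^i}(\bx^i)-f_{\pi^i}(\bx_\star)=[f(\bx^i)-f_\star]-[f(\bx^i)-f_{\pi^i}(\bx^i)]+[f_\star-f_{\pi^i}(\bx_\star)]$. Since $\pi^i$ is marginally uniform, $\E[f_{\pi^i}(\bx_\star)]=f_\star$ and $\E\|\nabla f_{\pi^i}(\bx_\star)\|^2=\sigma_\star^2$.

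\textbf{Step 2: the bias term.} By Lemma \ref{lem:core-function} with $\ell=f$,
\[
\E[f(\bx^i)-f_{\pi^i}(\bx^i)]=\frac1n\sum_{j<i}\E[f_{\pi^i}(\bx^i(i,j))-f_{\pi^i}(\bx^i)].
\]
Smoothness of $f_{\pi^i}$ and Lemma \ref{lem:core-stability} bound each summand by $\|\nabla f_{\pi^i}(\bx^i)\|\,\eta G_{ij}+\tfrac{\Lmax}{2}\eta^2G_{ij}^2$, where $G_{ij}\defeq\|\nabla f_{\pi^i}(\bx^j)-\nabla f_{\pi^j}(\bx^j)\|$. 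Applying Young's inequality to the cross term then gives a constant times $\eta\|\nabla f_{\pi^i}(\bx^i)\|^2+\eta G_{ij}^2$. The prefactor $\frac{i-1}{n}\le1$ tames the first piece, and bounding it again by $4\Lmax\Breg_{\pi^i}(\bx_\star,\bx^i)+2\|\nabla f_{\pi^i}(\bx_\star)\|^2$ lets the surplus from Step 1 absorb it. For $G_{ij}^2$ I insert $\nabla f_{\pi^i}(\bx_\star)$ and $\nabla f_{\pi^j}(\bx_\star)$ and obtain
\[
G_{ij}^2\le 3\|\nabla f_{\pi^i}(\bx^j)-\nabla f_{\pi^i}(\bx_\star)\|^2+3\|\nabla f_{\pi^j}(\bx^j)-\nabla f_{\pi^j}(\bx_\star)\|^2+3\|\nabla f_{\pi^i}(\bx_\star)-\nabla f_{\pi^j}(\bx_\star)\|^2 .
\]
The third term has expectation $O(\sigma_\star^2)$.

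\textbf{Step 3: the main obstacle.} The term $\|\nabla f_{\pi^i}(\bx^j)-\nabla f_{\pi^i}(\bx_\star)\|^2$ with $j<i$ is the delicate one. Conditional on the past, $\pi^i$ is not uniform over $[n]$, so a naive per-$i$ bound loses a factor $n/(n-j+1)$, and summing over $i$ gives a $\log n$. I will avoid this by summing the recursion over the whole epoch first and exchanging the order of summation. For fixed $j$, the indices $\{\pi^i\}_{i>j}$ are distinct elements of $[n]$, so deterministically
\[
\frac1n\sum_{i>j}\|\nabla f_{\pi^i}(\bx^j)-\nabla f_{\pi^i}(\bx_\star)\|^2\le\frac1n\sum_{m=1}^n 2L_m\Breg_m(\bx^j,\bx_\star)\le 2\Lmax\,(f(\bx^j)-f_\star),
\]
using $\nabla f(\bx_\star)=\bzero$. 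The second term is handled the same way, via $4L_{\pi^j}\Breg_{\pi^j}(\bx_\star,\bx^j)$, which the surplus absorbs. Multiplied by $\eta$, each such contribution is at most a small multiple of $\eta\Lmax\cdot\eta(f(\bx^j)-f_\star)$, and $\eta\Lmax\le 1/6$ lets it be absorbed into the leading term $-2\eta\sum_j(f(\bx^j)-f_\star)$, leaving say $-\eta\sum_i(f(\bx^i)-f_\star)$. Getting the constants to close here is where the restriction $\eta\le\frac1{6\Lmax}$ is used.

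\textbf{Step 4: conclusion.} What remains per epoch is
\[
\E\|\bx_{k+1}^1-\bx_\star\|^2\le\E\|\bx_k^1-\bx_\star\|^2-2\eta_k\sum_{i=1}^n\E[f(\bx_k^i)-f_\star]+C\eta_k^2 n\sigma_\star^2 .
\]
Telescoping over $k$, dividing by $2n H_K$, and applying Jensen to $\bar{\bx}_K$ yields the claimed bound with an explicit constant.
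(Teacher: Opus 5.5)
\textbf{Gap: the leading constant $\frac{1}{2}$ is not reached.} Your skeleton is the paper's: the identity of Lemma \ref{lem:I-descent}, Lemma \ref{lem:core-function} with $\ell=f$, smoothness plus Lemma \ref{lem:core-stability}, Young, and absorption via Lemma \ref{lem:co-coercivity-cvx}. However, Step 3 does not give the inequality you write in Step 4, so it does not give the theorem as stated.

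You charge $\frac{1}{n}\sum_{i>j}\Vert\nabla f_{\pi_k^i}(\bx_k^j)-\nabla f_{\pi_k^i}(\bx_{\star})\Vert^2$ to $2\Lmax(f(\bx_k^j)-f_{\star})$ and pay for it out of the leading term. By your own account this leaves only about $-\eta_k\sum_i\E[f(\bx_k^i)-f_{\star}]$, yet Step 4 silently restores the full $-2\eta_k$. In your route that term always carries a positive coefficient; the $\frac{\Lmax}{2}\eta_k^2G_{ij}^2$ piece alone contributes one. So a fixed fraction of $2\eta_k\sum_i(f(\bx_k^i)-f_{\star})$ is always consumed, and you end with $c\Vert\bx_1^1-\bx_{\star}\Vert^2/(nH_K)$ for some $c>\frac{1}{2}$. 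The form of the bound is right, and it would still be enough for Theorem \ref{thm:main-RR-cvx}, whose leading constant is $6$. But the displayed inequality is not proved.

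\textbf{The constants as set up do not close either.} Write the cross term as $\le\frac{\eta_k}{2\alpha}\Vert\nabla f_{\pi_k^i}(\bx_k^i)\Vert^2+\frac{\alpha\eta_k}{2}G_{ij}^2$, and use $4L_{\pi_k^i}$ in Step 1, the crude bound $\frac{i-1}{n}\le1$, and the factor $3$. Then for large $n$ the Bregman charges at early indices approach $\frac{2}{3\alpha}+\alpha+\frac{1}{6}\ge2\sqrt{2/3}+\frac16$. This exceeds the surplus $\frac{4}{3}$ (in units of $\eta_k\Breg_{\pi_k^i}(\bx_{\star},\bx_k^i)$) for every $\alpha$. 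You need sharper splits, or the exact weights $\frac{i-1}{n}$ and $\frac{n-j}{n}$.

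\textbf{The missing idea is step $(d)$ in the paper's proof of Lemma \ref{lem:I-core}.} It also shows that your ``main obstacle'' is not one. Condition on epochs $1,\ldots,k-1$ and on $\pi_k^1,\ldots,\pi_k^{j-1}$. Then $\bx_k^j$ is fixed, while $\pi_k^i$ and $\pi_k^j$ (for $i>j$) are both uniform on the remaining indices. Hence $\E\Vert\nabla f_{\pi_k^i}(\bx_k^j)\Vert^2=\E\Vert\nabla f_{\pi_k^j}(\bx_k^j)\Vert^2$, and likewise for the versions centered at $\bx_{\star}$. Comparing with $\pi_k^j$ rather than with the full average $f$ loses no factor $n/(n-j+1)$ and needs no exchange of summation.

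The paper therefore bounds $G_{ij}^2\le2\Vert\nabla f_{\pi_k^i}(\bx_k^j)\Vert^2+2\Vert\nabla f_{\pi_k^j}(\bx_k^j)\Vert^2$ without centering and obtains Lemma \ref{lem:I-core}. After summing and adding the $\frac{\eta_k}{2}$ from Lemma \ref{lem:I-descent}, it faces only $\frac{17\eta_k}{6}\sum_i\Vert\nabla f_{\pi_k^i}(\bx_k^i)\Vert^2$. It absorbs this entirely into $\sum_i\Breg_{\pi_k^i}(\bx_{\star},\bx_k^i)$, using $\Vert\nabla f_{\pi_k^i}(\bx_k^i)\Vert^2\le\frac{36}{17}L_{\pi_k^i}\Breg_{\pi_k^i}(\bx_{\star},\bx_k^i)+18\Vert\nabla f_{\pi_k^i}(\bx_{\star})\Vert^2$ and $\eta_k\le\frac{1}{6\Lmax}$; this is where $51=\frac{17}{6}\cdot18$ comes from. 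Nothing is taken from $f(\bx_k^i)-f_{\star}$, so its coefficient stays $2\eta_k$ in your normalization, which is exactly what yields the $\frac{1}{2}$.
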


\paragraph{Discussion on Theorem \ref{thm:I-rate}.}

To the best of our knowledge, Theorem \ref{thm:I-rate} offers the
first theoretical evidence that $\ShufflingSGD$ under $\RR$ shares
surprising similarities with $\SGD$, as reflected in the two aspects
elaborated below.

First, Theorem \ref{thm:I-rate} states that, similar to $\SGD$,
$\ShufflingSGD$ under $\RR$ does converge under any reasonable stepsize
(i.e., $\eta_{k}\lesssim1/\Lmax$). In contrast, as far as we know,
all prior works that provide provable rates of multi-epoch $\RR$
for smooth convex optimization require the stepsize $\eta_{k}$ to
be smaller than a threshold proportional to $1/n$, with only one
exception \citep{pmlr-v97-nagaraj19a}, which, however, assumes each
$f_{i}$ to be additionally Lipschitz, thereby limiting the applicability
of their theory and even excluding common quadratic optimization problems
over $\R^{d}$.

Second, we highlight that Theorem \ref{thm:I-rate} gives the same
convergence upper bound (up to constant factors) as $\SGD$ \citep{garrigos2023handbook}
in smooth convex optimization, while allowing a stepsize that depends
on the epoch number. This result thus fills a gap in the literature.

Putting these together, Theorem \ref{thm:I-rate} indicates that,
for smooth convex optimization, $\RR$ under any reasonable stepsize
never converges more slowly than $\SGD$.

\paragraph{Analysis.}

In the following, we present the analysis for Theorem \ref{thm:I-rate}
and finally prove it. The core idea underlying the proof is, as one
might expect, to analyze $\ShufflingSGD$ in a manner analogous to
$\SGD$. In other words, we aim to quantify the progress made by Algorithm
\ref{alg:Shuffling-SGD} at each iteration. Although this perspective
is natural, it has been less explored in prior studies. The main reason
is that, as discussed earlier, the permutation in $\ShufflingSGD$
causes the most important property of $\SGD$, unbiasedness, to no
longer hold. To overcome this barrier, we develop a novel analysis
that avoids any additional assumptions, such as Lipschitz continuity
considered in \citet{pmlr-v97-nagaraj19a}.

We start with the following Lemma \ref{lem:I-descent}, a standard
step in characterizing the per-iterate progress of $\ShufflingSGD$
(or $\SGD$). The proof of Lemma \ref{lem:I-descent} follows directly
from expanding both sides. To make the work self-contained, we include
it in Appendix \ref{sec:Bound-I}.
\begin{lem}
\label{lem:I-descent}Under Assumption \ref{assu:minimizer}, for
any $k\in\left[K\right]$ and $i\in\left[n\right]$, $\ShufflingSGD$
(Algorithm \ref{alg:Shuffling-SGD}) guarantees that
\[
f_{\pi_{k}^{i}}(\bx_{k}^{i})-f_{\pi_{k}^{i}}(\bx_{\star})=\frac{\left\Vert \bx_{k}^{i}-\bx_{\star}\right\Vert ^{2}-\left\Vert \bx_{k}^{i+1}-\bx_{\star}\right\Vert ^{2}}{2\eta_{k}}+\frac{\eta_{k}}{2}\left\Vert \nabla f_{\pi_{k}^{i}}(\bx_{k}^{i})\right\Vert ^{2}-\Breg_{\pi_{k}^{i}}(\bx_{\star},\bx_{k}^{i}).
\]
\end{lem}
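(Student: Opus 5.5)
The identity in Lemma \ref{lem:I-descent} is deterministic, so the plan is to prove it by direct algebra from the update rule $\bx_{k}^{i+1}=\bx_{k}^{i}-\eta_{k}\nabla f_{\pi_{k}^{i}}(\bx_{k}^{i})$. Only Assumption \ref{assu:minimizer} is needed, and only to make $\bx_{\star}$ a well-defined point. Convexity and smoothness play no role, so the identity holds for any differentiable $f_{\pi_{k}^{i}}$. Throughout, write $g\defeq\nabla f_{\pi_{k}^{i}}(\bx_{k}^{i})$ and $h\defeq f_{\pi_{k}^{i}}$.

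First, expand the squared distance after one step:
\[
\left\Vert \bx_{k}^{i+1}-\bx_{\star}\right\Vert ^{2}=\left\Vert \bx_{k}^{i}-\bx_{\star}\right\Vert ^{2}-2\eta_{k}\left\langle g,\bx_{k}^{i}-\bx_{\star}\right\rangle +\eta_{k}^{2}\left\Vert g\right\Vert ^{2}.
\]
Rearranging and dividing by $2\eta_{k}>0$ gives
\[
\left\langle g,\bx_{k}^{i}-\bx_{\star}\right\rangle =\frac{\left\Vert \bx_{k}^{i}-\bx_{\star}\right\Vert ^{2}-\left\Vert \bx_{k}^{i+1}-\bx_{\star}\right\Vert ^{2}}{2\eta_{k}}+\frac{\eta_{k}}{2}\left\Vert g\right\Vert ^{2}.
\]

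Second, rewrite the left-hand side using the definition of the Bregman divergence,
\[
\Breg_{\pi_{k}^{i}}(\bx_{\star},\bx_{k}^{i})=h(\bx_{\star})-h(\bx_{k}^{i})-\left\langle g,\bx_{\star}-\bx_{k}^{i}\right\rangle .
\]
This yields
\[
\left\langle g,\bx_{k}^{i}-\bx_{\star}\right\rangle =h(\bx_{k}^{i})-h(\bx_{\star})+\Breg_{\pi_{k}^{i}}(\bx_{\star},\bx_{k}^{i}).
\]
Substituting into the previous display and moving the Bregman term to the right-hand side gives exactly the claimed identity.

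There is no real obstacle here. The only point requiring care is the sign convention: the Bregman term is $\Breg(\bx_{\star},\bx_{k}^{i})$, with the linearization taken at $\bx_{k}^{i}$, which matches the gradient used in the update.
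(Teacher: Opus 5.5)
Your proposal is correct and follows essentially the same route as the paper. Both write $f_{\pi_{k}^{i}}(\bx_{k}^{i})-f_{\pi_{k}^{i}}(\bx_{\star})=\left\langle \nabla f_{\pi_{k}^{i}}(\bx_{k}^{i}),\bx_{k}^{i}-\bx_{\star}\right\rangle -\Breg_{\pi_{k}^{i}}(\bx_{\star},\bx_{k}^{i})$ via the Bregman definition, then expand the inner product using the update rule, exactly as you do.
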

By the co-coercivity property (i.e., Lemma \ref{lem:co-coercivity-cvx}),
the term $\frac{\eta_{k}}{2}\Vert\nabla f_{\pi_{k}^{i}}(\bx_{k}^{i})\Vert^{2}-\Breg_{\pi_{k}^{i}}(\bx_{\star},\bx_{k}^{i})$
can be easily upper bounded by $\eta_{k}\Vert\nabla f_{\pi_{k}^{i}}(\bx_{\star})\Vert^{2}$
(ignoring constant factors) once $\eta_{k}\lesssim1/\Lmax$, which
further yields a desired residual term $\eta_{k}\sigma_{\star}^{2}$
after taking expectations. Therefore, the only difficulty is to relate
$f_{\pi_{k}^{i}}(\bx_{k}^{i})$ to $f(\bx_{k}^{i})$, which is, again,
the main challenge in the analysis of $\ShufflingSGD$.

To address the issue mentioned, the prior work of \citet{pmlr-v97-nagaraj19a}
applies an argument based on Wasserstein distance, which additionally
requires the Lipschitz continuity of each $f_{i}$. In comparison,
we tackle this problem by establishing the following new inequality
in Lemma \ref{lem:I-core}.
\begin{lem}
\label{lem:I-core}Under Assumptions \ref{assu:convexity} and \ref{assu:smoothness},
suppose $\RR$ is employed with $\eta_{k}\leq\frac{1}{6\Lmax},\forall k\in\left[K\right]$,
then for any $k\in\left[K\right]$ and $i\in\left[n\right]$, $\ShufflingSGD$
(Algorithm \ref{alg:Shuffling-SGD}) guarantees that
\[
\E\left[f(\bx_{k}^{i})\right]\leq\E\left[f_{\pi_{k}^{i}}(\bx_{k}^{i})\right]+\eta_{k}\E\left[\left\Vert \nabla f_{\pi_{k}^{i}}(\bx_{k}^{i})\right\Vert ^{2}\right]+\frac{4\eta_{k}}{3n}\sum_{j<i}\E\left[\left\Vert \nabla f_{\pi_{k}^{j}}(\bx_{k}^{j})\right\Vert ^{2}\right].
\]
\end{lem}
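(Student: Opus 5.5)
The plan is to start from Lemma \ref{lem:core-function} with $\ell=f$ and $\ell_i=f_i$. It gives
\[
\E\left[f(\bx_{k}^{i})-f_{\pi_{k}^{i}}(\bx_{k}^{i})\right]=\frac{1}{n}\sum_{j<i}\E\left[f_{\pi_{k}^{i}}(\bx_{k}^{i}(i,j))-f_{\pi_{k}^{i}}(\bx_{k}^{i})\right],
\]
so it suffices to bound each summand. I will bound it by the $\Lmax$-smoothness upper quadratic bound of $f_{\pi_k^i}$ around the real iterate $\bx_k^i$, then control the displacement with Lemma \ref{lem:core-stability}. Write $g_i\defeq\nabla f_{\pi_k^i}(\bx_k^i)$ and $D_{ij}\defeq\Vert\nabla f_{\pi_{k}^{i}}(\bx_{k}^{j})-\nabla f_{\pi_{k}^{j}}(\bx_{k}^{j})\Vert$. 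Since $\eta_k\le\frac{1}{6\Lmax}\le\frac{2}{\Lmax}$, Lemma \ref{lem:core-stability} applies and gives $\Vert\bx_k^i(i,j)-\bx_k^i\Vert\le\eta_k D_{ij}$. Combining with Cauchy--Schwarz,
\[
f_{\pi_{k}^{i}}(\bx_{k}^{i}(i,j))-f_{\pi_{k}^{i}}(\bx_{k}^{i})\le\eta_k\Vert g_i\Vert D_{ij}+\frac{\Lmax\eta_k^2}{2}D_{ij}^2 .
\]

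Next I apply Young's inequality with a tuned weight, $\eta_k\Vert g_i\Vert D_{ij}\le\eta_k\Vert g_i\Vert^2+\frac{\eta_k}{4}D_{ij}^2$. I also use $\Lmax\eta_k\le\frac16$, so the quadratic term is at most $\frac{\eta_k}{12}D_{ij}^2$. Each summand is then at most $\eta_k\Vert g_i\Vert^2+\frac{\eta_k}{3}D_{ij}^2$. Averaging over $j<i$ with weight $\frac1n$, and using $\frac{i-1}{n}\le1$, yields
\[
\E\left[f(\bx_{k}^{i})-f_{\pi_{k}^{i}}(\bx_{k}^{i})\right]\le\eta_k\E\Vert g_i\Vert^2+\frac{\eta_k}{3n}\sum_{j<i}\E[D_{ij}^2].
\]
Finally, $D_{ij}^2\le2\Vert\nabla f_{\pi_k^i}(\bx_k^j)\Vert^2+2\Vert\nabla f_{\pi_k^j}(\bx_k^j)\Vert^2$.

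The remaining and main step is to replace the ``mismatched'' term $\E\Vert\nabla f_{\pi_k^i}(\bx_k^j)\Vert^2$, where the index $\pi_k^i$ does not generate the iterate $\bx_k^j$, by the on-trajectory quantity $\E\Vert\nabla f_{\pi_k^j}(\bx_k^j)\Vert^2$. I will use an exchangeability argument under $\RR$. The iterate $\bx_k^j$ is a deterministic function of $\bx_k^1$ and $\pi_k^1,\dots,\pi_k^{j-1}$, and $\bx_k^1$ is independent of $\pi_k$. Conditional on $\bx_k^1$ and $(\pi_k^1,\dots,\pi_k^{j-1})$, the entries $\pi_k^j$ and $\pi_k^i$ (with $i>j$) are both uniformly distributed on the remaining indices, so they have the same conditional law. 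Taking the conditional expectation of the nonnegative function $\Vert\nabla f_{m}(\bx_k^j)\Vert^2$ at $m=\pi_k^i$ versus $m=\pi_k^j$ therefore gives equal values. Hence $\E\Vert\nabla f_{\pi_k^i}(\bx_k^j)\Vert^2=\E\Vert\nabla f_{\pi_k^j}(\bx_k^j)\Vert^2$.

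This gives $\E[D_{ij}^2]\le4\E\Vert\nabla f_{\pi_k^j}(\bx_k^j)\Vert^2$, so the last sum is at most $\frac{4\eta_k}{3n}\sum_{j<i}\E\Vert\nabla f_{\pi_k^j}(\bx_k^j)\Vert^2$. Rearranging yields exactly the claimed inequality. I expect the only delicate points to be this symmetry step, making sure $\bx_k^j$ does not depend on $\pi_k^j$, and the choice of Young weight, which is what lands the constant $\frac43$.
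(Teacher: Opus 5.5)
Your proposal is correct and follows the paper's proof essentially step for step. The shared steps are: Lemma \ref{lem:core-function} with $\ell=f$; the smoothness upper bound combined with Cauchy--Schwarz and Lemma \ref{lem:core-stability}; the same Young weight, which under $\eta_k\le\frac{1}{6\Lmax}$ yields the coefficient $\frac{\eta_k}{3}$ on $D_{ij}^2$; the split $D_{ij}^2\le 2\Vert\cdot\Vert^2+2\Vert\cdot\Vert^2$; and the exchangeability identity $\E\Vert\nabla f_{\pi_k^i}(\bx_k^j)\Vert^2=\E\Vert\nabla f_{\pi_k^j}(\bx_k^j)\Vert^2$ for $j<i$. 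The only differences are cosmetic: you use $\Lmax$ rather than $L_{\pi_k^i}$ in the quadratic term, and you spell out the conditioning argument behind the exchangeability step, which the paper simply asserts.
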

\begin{rem}
Lemma \ref{lem:I-core} is stronger than the existing bound of \citet{pmlr-v97-nagaraj19a}
derived via the Wasserstein distance, since imposing the additional
condition $\left\Vert \nabla f_{i}(\bx)\right\Vert \leq G$, as in
\citet{pmlr-v97-nagaraj19a}, recovers their Lemma 4.
\end{rem}
Lemma \ref{lem:I-core} provides a novel inequality that measures
the difference between $\E\left[f(\bx_{k}^{i})\right]$ and $\E[f_{\pi_{k}^{i}}(\bx_{k}^{i})]$
by the second moment of the stochastic gradients up to time $i$.
Note that the second term on the R.H.S. can be absorbed by the R.H.S.
of the inequality in Lemma \ref{lem:I-descent}. For the remaining
term, the coefficient $\eta_{k}/n$ is key to the final proof, which
ensures that the accumulated error in one epoch is controlled by $\E[\eta_{k}\sum_{i=1}^{n}\Vert\nabla f_{\pi_{k}^{i}}(\bx_{k}^{i})\Vert^{2}]$.

The proof of Lemma \ref{lem:I-core} builds on the two core results,
Lemmas \ref{lem:core-function} and \ref{lem:core-stability}, presented
before. To save space, we defer it to Appendix \ref{sec:Bound-I}.

\paragraph{Final proof.}

With Lemmas \ref{lem:I-descent} and \ref{lem:I-core} stated above,
we are ready to prove Theorem \ref{thm:I-rate}.

\begin{proof}[Proof of Theorem \ref{thm:I-rate}]
We sum the inequality in Lemma \ref{lem:I-descent} from $i=1$ to
$n$ and use $\bx_{k+1}^{1}=\bx_{k}^{n+1}$ to obtain
\[
\sum_{i=1}^{n}f_{\pi_{k}^{i}}(\bx_{k}^{i})-f_{\pi_{k}^{i}}(\bx_{\star})=\frac{\left\Vert \bx_{k}^{1}-\bx_{\star}\right\Vert ^{2}-\left\Vert \bx_{k+1}^{1}-\bx_{\star}\right\Vert ^{2}}{2\eta_{k}}+\frac{\eta_{k}}{2}\sum_{i=1}^{n}\left\Vert \nabla f_{\pi_{k}^{i}}(\bx_{k}^{i})\right\Vert ^{2}-\sum_{i=1}^{n}\Breg_{\pi_{k}^{i}}(\bx_{\star},\bx_{k}^{i}).
\]
Take expectations on both sides and note that $\E\left[f_{\pi_{k}^{i}}(\bx_{\star})\right]=f_{\star}$
to yield
\begin{align}
\sum_{i=1}^{n}\E\left[f_{\pi_{k}^{i}}(\bx_{k}^{i})-f_{\star}\right]= & \frac{\E\left[\left\Vert \bx_{k}^{1}-\bx_{\star}\right\Vert ^{2}\right]-\E\left[\left\Vert \bx_{k+1}^{1}-\bx_{\star}\right\Vert ^{2}\right]}{2\eta_{k}}\nonumber \\
 & +\E\left[\sum_{i=1}^{n}\frac{\eta_{k}}{2}\left\Vert \nabla f_{\pi_{k}^{i}}(\bx_{k}^{i})\right\Vert ^{2}-\Breg_{\pi_{k}^{i}}(\bx_{\star},\bx_{k}^{i})\right].\label{eq:I-rate-1}
\end{align}

Next, we invoke Lemma \ref{lem:I-core} and sum it up from $i=1$
to $n$ to have
\begin{equation}
\sum_{i=1}^{n}\E\left[f(\bx_{k}^{i})\right]\leq\sum_{i=1}^{n}\E\left[f_{\pi_{k}^{i}}(\bx_{k}^{i})\right]+\E\left[\frac{7\eta_{k}}{3}\sum_{i=1}^{n}\left\Vert \nabla f_{\pi_{k}^{i}}(\bx_{k}^{i})\right\Vert ^{2}\right].\label{eq:I-rate-2}
\end{equation}
Combine (\ref{eq:I-rate-1}) and (\ref{eq:I-rate-2}) to obtain
\begin{align}
\sum_{i=1}^{n}\E\left[f(\bx_{k}^{i})-f_{\star}\right]\leq & \frac{\E\left[\left\Vert \bx_{k}^{1}-\bx_{\star}\right\Vert ^{2}\right]-\E\left[\left\Vert \bx_{k+1}^{1}-\bx_{\star}\right\Vert ^{2}\right]}{2\eta_{k}}\nonumber \\
 & +\E\left[\sum_{i=1}^{n}\frac{17\eta_{k}}{6}\left\Vert \nabla f_{\pi_{k}^{i}}(\bx_{k}^{i})\right\Vert ^{2}-\Breg_{\pi_{k}^{i}}(\bx_{\star},\bx_{k}^{i})\right].\label{eq:I-rate-3}
\end{align}

One more step, we observe that 
\begin{align}
\left\Vert \nabla f_{\pi_{k}^{i}}(\bx_{k}^{i})\right\Vert ^{2} & \leq\frac{18}{17}\left\Vert \nabla f_{\pi_{k}^{i}}(\bx_{k}^{i})-\nabla f_{\pi_{k}^{i}}(\bx_{\star})\right\Vert ^{2}+18\left\Vert \nabla f_{\pi_{k}^{i}}(\bx_{\star})\right\Vert ^{2}\nonumber \\
 & \overset{(a)}{\leq}\frac{36}{17}L_{\pi_{k}^{i}}\Breg_{\pi_{k}^{i}}(\bx_{\star},\bx_{k}^{i})+18\left\Vert \nabla f_{\pi_{k}^{i}}(\bx_{\star})\right\Vert ^{2}\nonumber \\
\Rightarrow\E\left[\frac{17\eta_{k}}{6}\sum_{i=1}^{n}\left\Vert \nabla f_{\pi_{k}^{i}}(\bx_{k}^{i})\right\Vert ^{2}\right] & \overset{(b)}{\leq}\E\left[\sum_{i=1}^{n}\Breg_{\pi_{k}^{i}}(\bx_{\star},\bx_{k}^{i})\right]+51\eta_{k}n\sigma_{\star}^{2},\label{eq:I-rate-4}
\end{align}
where $(a)$ is by Lemma \ref{lem:co-coercivity-cvx} and $(b)$ is
due to $\eta_{k}\leq\frac{1}{6\Lmax}$ and $\E\left[\left\Vert \nabla f_{\pi_{k}^{i}}(\bx_{\star})\right\Vert ^{2}\right]=\sigma_{\star}^{2},\forall k\in\left[K\right],i\in\left[n\right]$. 

Finally, we plug (\ref{eq:I-rate-4}) back into (\ref{eq:I-rate-3}),
multiply both sides by $\eta_{k}$, sum over $k=1$ to $K$, divide
both sides by $n\sum_{k=1}^{K}\eta_{k}$, apply the convexity of $f$,
and use the definition of $\bar{\bx}_{K}$ in (\ref{eq:avg-x}) to
conclude.
\end{proof}

\subsection{Bound II: Always Better than SGD under Small Stepsize\label{subsec:Bound-II}}

This subsection presents the other rate of $\ShufflingSGD$ under
$\RR$, as shown in Theorem \ref{thm:II-rate} below.
\begin{thm}
\label{thm:II-rate}Under Assumptions \ref{assu:minimizer}, \ref{assu:convexity},
and \ref{assu:smoothness}, suppose $\RR$ is employed with $\eta_{k}\leq\frac{1}{2\Lmax},\forall k\in\left[K\right]$,
then $\ShufflingSGD$ (Algorithm \ref{alg:Shuffling-SGD}) guarantees
that
\[
\E\left[f(\bar{\bx}_{K})-f_{\star}\right]\leq\frac{6\left\Vert \bx_{1}^{1}-\bx_{\star}\right\Vert ^{2}}{n\sum_{k=1}^{K}\eta_{k}}+\frac{8\sum_{k=1}^{K}\eta_{k}^{3}n\Lavg\sigma_{\star}^{2}}{\sum_{k=1}^{K}\eta_{k}},
\]
where $\bar{\bx}_{K}$ is defined in (\ref{eq:avg-x}).
\end{thm}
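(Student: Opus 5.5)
We would follow the structure of the proof of Theorem \ref{thm:I-rate}, but bound the shuffling bias $\E[f(\bx_{k}^{i})-f_{\pi_{k}^{i}}(\bx_{k}^{i})]$ in a way that produces an $\eta_{k}^{2}n\Lavg$ factor instead of $\eta_{k}$. Start from Lemma \ref{lem:I-descent} summed over $i\in[n]$ and take expectations, exactly as in (\ref{eq:I-rate-1}). The new ingredient is to apply Lemma \ref{lem:core-function} not to $\ell=f$ but to the shifted functions $\ell_{i}(\bx)\defeq f_{i}(\bx)-\langle\nabla f_{i}(\bx_{\star}),\bx\rangle$. Their average is $f$ minus a linear function with zero slope, since $\nabla f(\bx_{\star})=\bzero$, so $\ell=f$ up to a constant. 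Each $\ell_{i}$ is convex and $L_{i}$-smooth with $\nabla\ell_{i}(\bx_{\star})=\bzero$. This gives
\[
\E\left[f(\bx_{k}^{i})-f_{\pi_{k}^{i}}(\bx_{k}^{i})\right]=\E\left[\langle\nabla f_{\pi_{k}^{i}}(\bx_{\star}),\bx_{\star}-\bx_{k}^{i}\rangle\right]+\frac{1}{n}\sum_{j<i}\E\left[\ell_{\pi_{k}^{i}}(\bx_{k}^{i}(i,j))-\ell_{\pi_{k}^{i}}(\bx_{k}^{i})\right].
\]
The first term is again a bias term. The better route is to use the identity $f_{\pi}(\bx)-f_{\pi}(\bx_{\star})=\Breg_{\pi}(\bx,\bx_{\star})+\langle\nabla f_{\pi}(\bx_{\star}),\bx-\bx_{\star}\rangle$ directly in the descent step. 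One then compares the average over $i$ of $\Breg_{\pi_{k}^{i}}(\bx_{k}^{i},\bx_{\star})$ with $\frac{1}{n}\sum_{i}\Breg_{i}(\bx_{k}^{i},\bx_{\star})$. Here Lemma \ref{lem:core-function} applies cleanly with $\ell_{i}=\Breg_{i}(\cdot,\bx_{\star})$, whose average is $\Breg(\cdot,\bx_{\star})=f-f_{\star}$.

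Write the remaining linear term as $\langle\nabla f_{\pi_{k}^{i}}(\bx_{\star}),\bx_{k}^{i}-\bx_{\star}\rangle$ and decompose $\bx_{k}^{i}=\bx_{k}^{1}-\eta_{k}\sum_{j<i}\nabla f_{\pi_{k}^{j}}(\bx_{k}^{j})$. Since $\bx_{k}^{1}$ is independent of $\pi_{k}$ and $\E[\nabla f_{\pi_{k}^{i}}(\bx_{\star})]=\bzero$, the $\bx_{k}^{1}$ part vanishes. What remains is $-\eta_{k}\sum_{j<i}\E\langle\nabla f_{\pi_{k}^{i}}(\bx_{\star}),\nabla f_{\pi_{k}^{j}}(\bx_{k}^{j})\rangle$. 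Split $\nabla f_{\pi_{k}^{j}}(\bx_{k}^{j})=\nabla f_{\pi_{k}^{j}}(\bx_{\star})+(\nabla f_{\pi_{k}^{j}}(\bx_{k}^{j})-\nabla f_{\pi_{k}^{j}}(\bx_{\star}))$. The pure cross term $\sum_{j<i}\E\langle\nabla f_{\pi^{i}}(\bx_{\star}),\nabla f_{\pi^{j}}(\bx_{\star})\rangle=-\frac{i-1}{n-1}\sigma_{\star}^{2}$ enters with a favorable sign, since sampling without replacement gives negative correlation. The other cross term we bound by Young's inequality, charging the gradient-difference part to $\frac{1}{n}\sum_{i}\E\Breg_{\pi^{i}}(\bx_{\star},\bx_{k}^{i})$ (times $\eta_{k}L$) and the rest to $\eta_{k}^{2}n\Lavg\sigma_{\star}^{2}$-type terms.

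We expect the main obstacle to be obtaining the $\eta_{k}^{3}n\Lavg$ residual without a $1/n$ stepsize condition. A naive Young split produces $\eta_{k}^{2}n\Lmax\cdot(\text{Bregman})$, which needs $\eta_{k}\lesssim1/(n\Lmax)$. To avoid this we would route the comparison through Lemma \ref{lem:core-stability}: the difference $\ell_{\pi^{i}}(\bx^{i}(i,j))-\ell_{\pi^{i}}(\bx^{i})$ is controlled by smoothness at the distance $\eta_{k}\|\nabla f_{\pi^{i}}(\bx^{j})-\nabla f_{\pi^{j}}(\bx^{j})\|$. Bounding the inner-product part by $\E\|\nabla\ell_{\pi^{i}}(\bx_{k}^{i})\|\cdot\eta_{k}\|\cdot\|$ and applying Young, with weight chosen so that the Bregman pieces carry coefficient at most $\frac{1}{2}$ (this uses only $\eta_{k}\le\frac{1}{2\Lmax}$), leaves $\eta_{k}^{2}L_{\pi^{i}}\|\nabla f_{\pi^{i}}(\bx_{\star})\|^{2}$ summed over $j<i\le n$. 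Averaging $L_{\pi^{i}}\|\nabla f_{\pi^{i}}(\bx_{\star})\|^{2}$ is awkward, so we would instead split it as $\Lmax\cdot(\text{Bregman at }\bx^{j})$ plus an $L_{\pi^{i}}$-weighted $\sigma_{\star}^{2}$-type term. This is only bounded by $\Lavg\sigma_{\star}^{2}$ after using independence of the positions, which needs care.

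With the per-iterate bias bounded, one sums over $i$ and $k$. The leftover terms of the form $\eta_{k}\|\bx_{k}^{1}-\bx_{\star}\|^{2}/n$-scale or Bregman pieces are absorbed into the telescoping distance terms, at the cost of the constant $6$ instead of $\frac{1}{2}$. Multiplying by $\eta_{k}$, dividing by $nH_{K}$ and applying Jensen to $\bar{\bx}_{K}$ then completes the proof, as in Theorem \ref{thm:I-rate}.
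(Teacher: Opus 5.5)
Your treatment of the shuffling bias is essentially the paper's Lemma \ref{lem:II-core}: Lemma \ref{lem:core-function} with $\ell_i=\Breg_i(\cdot,\bx_\star)$ (equivalently $f_i-\langle\nabla f_i(\bx_\star),\cdot\rangle$), followed by Lemma \ref{lem:core-stability} and co-coercivity. The term $\E[L_{\pi_k^i}\|\nabla f_{\pi_k^i}(\bx_\star)\|^2]$ that you call awkward is harmless. It carries the prefactor $\frac1n$ from Lemma \ref{lem:core-function}, so the crude bound $\Lmax\sigma_\star^2\le n\Lavg\sigma_\star^2$ suffices. Only the mixed term $\E[L_{\pi_k^i}\|\nabla f_{\pi_k^j}(\bx_\star)\|^2]\le 2\Lavg\sigma_\star^2$ uses sampling without replacement.

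\textbf{The gap is in the descent half.} You start from (\ref{eq:I-rate-1}), whose right-hand side contains $\frac{\eta_k}{2}\sum_i\|\nabla f_{\pi_k^i}(\bx_k^i)\|^2-\Breg_{\pi_k^i}(\bx_\star,\bx_k^i)$, and you never say what happens to this term. If you bound it as in (\ref{eq:I-rate-4}), or with any split $\|a+b\|^2\le 2\|a\|^2+2\|b\|^2$, you are left with a residual of order $\eta_k n\sigma_\star^2$ per epoch. That is exactly the $\SGD$ term $\sum_k\eta_k^2\sigma_\star^2/\sum_k\eta_k$ that Theorem \ref{thm:II-rate} has to eliminate. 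The favorable cross term you found, $\eta_k\sum_i\frac{i-1}{n-1}\sigma_\star^2=\frac{\eta_k n}{2}\sigma_\star^2$, has exactly the size of $\frac{\eta_k}{2}\sum_i\E\|\nabla f_{\pi_k^i}(\bx_\star)\|^2$. It therefore only helps if the coefficient of $\|\nabla f_{\pi_k^i}(\bx_\star)\|^2$ stays at $\frac{\eta_k}{2}(1+O(\eta_k\Lmax))$, not $\frac{\eta_k}{2}c$ for some constant $c>1$. Your proposal puts the ``main obstacle'' elsewhere and never addresses this one.

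\textbf{How to repair it within your framework.} Set $g_l\defeq\nabla f_l(\bx_\star)$ and $\Delta_i\defeq\nabla f_{\pi_k^i}(\bx_k^i)-g_{\pi_k^i}$, and expand $\|\nabla f_{\pi_k^i}(\bx_k^i)\|^2=\|g_{\pi_k^i}\|^2+2\langle g_{\pi_k^i},\Delta_i\rangle+\|\Delta_i\|^2$ exactly. Then:
\begin{itemize}
\item $\frac{\eta_k}{2}\sum_i\|g_{\pi_k^i}\|^2+\eta_k\sum_i\sum_{j<i}\langle g_{\pi_k^i},g_{\pi_k^j}\rangle=\frac{\eta_k}{2}\|\sum_i g_{\pi_k^i}\|^2=0$ pathwise, and the $\bx_k^1$ contribution also vanishes pathwise.
\item The self cross term merges with your ``other cross term'' into $\eta_k\sum_j\langle\sum_{i\ge j}g_{\pi_k^i},\Delta_j\rangle=\sum_j\langle\by_k^j-\bx_\star,\Delta_j\rangle$, because $\sum_{i\ge j}g_{\pi_k^i}=-\sum_{i<j}g_{\pi_k^i}$.
\item By the three-point identity this equals $\sum_j[\Breg_{\pi_k^j}(\bx_\star,\bx_k^j)-\Breg_{\pi_k^j}(\by_k^j,\bx_k^j)+\Breg_{\pi_k^j}(\by_k^j,\bx_\star)]$. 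The first piece cancels $-\Breg_{\pi_k^j}(\bx_\star,\bx_k^j)$.
\item Using $\frac{\eta_k}{2}\|\Delta_j\|^2\le\frac12\Breg_{\pi_k^j}(\bx_k^j,\bx_\star)$, you obtain Lemma \ref{lem:II-descent} verbatim. Lemmas \ref{lem:II-residual} and \ref{lem:II-core} then give the stated constants $6$ and $8$.
\end{itemize}
So your direct expansion, done exactly, rediscovers the paper's virtual sequence $\by_k^i$. The paper's recursion on $\|\bx_k^i-\by_k^i\|^2$ performs the same cancellation in one line.

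Splitting the cross terms separately by Young can also be made to work. It needs an $O(1)$ weight on the Bregman side and weight $\eta_k^2L_{\pi_k^j}$ on $\|\sum_{i>j}g_{\pi_k^i}\|^2$, and it loses constants. Finally, leftover terms like $\eta_k\|\bx_k^1-\bx_\star\|^2/n$ could not be absorbed into the telescoping differences, since they would compound across epochs. In the correct argument no such terms appear.
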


\paragraph{Discussion on Theorem \ref{thm:II-rate}.}

Readers familiar with the literature on shuffling gradient methods
may readily figure out that the rate given in Theorem \ref{thm:II-rate}
perfectly matches the known bound for $\ShufflingSGD$ under $\RR$
in smooth convex optimization (e.g., \citet{NEURIPS2020_c8cc6e90,JMLR:v22:20-1238}).
However, we emphasize a key difference here, that is, the stepsize
in our Theorem \ref{thm:II-rate} is allowed to satisfy $\eta_{k}\lesssim1/\Lmax$,
in contrast to all existing results that require $\eta_{k}$ to be
at most inversely proportional to $n$.

More importantly, in the setting of nonuniform $L_{i}$ considered
in this work, the largest threshold on the stepsize in the literature
that guarantees a rate similar to Theorem \ref{thm:II-rate} is in
the order of $1/(n\sqrt{\Lavg\Lmax})$ \citep{pmlr-v235-liu24cg,NEURIPS2024_84d39572}.
But as indicated by our Theorem \ref{thm:II-rate}, the superiority
of $\RR$ over $\SGD$ already exists once $\eta_{k}\lesssim1/(n\Lavg)$.
Especially, this improvement can be significant when a dominant smoothness
parameter exists, leading to $\Lmax\approx n\Lavg$.

Therefore, Theorem \ref{thm:II-rate} is the first result to extend
the known bound in smooth convex optimization to any reasonable stepsize
while preserving the favorable property of Algorithm \ref{alg:Shuffling-SGD},
i.e., $\ShufflingSGD$ under $\RR$ provably converges faster than
$\SGD$ when the stepsize is sufficiently small.

\paragraph{Analysis.}

The roadmap for establishing Theorem \ref{thm:II-rate} differs wildly
from that used before to prove Theorem \ref{thm:I-rate}. This time,
our proof strategy is to check how close $\ShufflingSGD$ can be to
Gradient Descent. More concretely, we will view each epoch of Algorithm
\ref{alg:Shuffling-SGD} (containing $n$ iterations) as a single
step and analyze the progress made by it at once. This kind of approach
has appeared in different previous works (e.g., \citet{NEURIPS2020_c8cc6e90,JMLR:v22:20-1238})
and always yields a convergence rate in the order of $1/(n^{1/3}K^{2/3})$,
faster than $\SGD$ when $K$ is large.

However, all works that follow the idea described above share the
same issue, that is, they require the stepsize to be in the order
of $1/n$, which is, however, not enough for our purpose. In the following,
we develop a new analysis to bypass this critical obstacle.

We now formally begin the analysis by introducing another virtual
sequence, defined as follows, for any $k\in\left[K\right]$,
\begin{eqnarray}
\by_{k}^{i+1}\defeq\by_{k}^{i}-\eta_{k}\nabla f_{\pi_{k}^{i}}(\bx_{\star}),\forall i\in\left[n\right], & \by_{k+1}^{1}\defeq\by_{k}^{n+1}, & \text{where}\enspace\by_{1}^{1}\defeq\bx_{\star}.\label{eq:main-virtual-y}
\end{eqnarray}
Under the above definition, and noting that $\nabla f(\bx_{\star})=\bzero$,
one can find
\[
\by_{k}^{n+1}=\by_{k}^{1}-\eta_{k}\sum_{i=1}^{n}\nabla f_{\pi_{k}^{i}}(\bx_{\star})=\by_{k}^{1}-\eta_{k}n\nabla f(\bx_{\star})=\by_{k}^{1},\forall k\in\left[K\right].
\]
Combine the above line and $\by_{1}^{1}=\bx_{\star}$ as defined in
(\ref{eq:main-virtual-y}) to have
\begin{equation}
\by_{k}^{n+1}=\by_{k}^{1}=\bx_{\star},\forall k\in\left[K\right].\label{eq:main-virtual-y-prop}
\end{equation}
The above virtual sequence is inspired by the work of \citet{NEURIPS2020_c8cc6e90},
which, as far as we know, was the first to propose a similar term
under the constant stepsize. Here, we slightly extend their idea to
accommodate the case where the stepsize can depend on the current
epoch number.
\begin{rem}
We note that \citet{NEURIPS2020_c8cc6e90} introduced the virtual
sequence to handle the case of individual strong convexity, i.e.,
each $f_{i}$ is required to be strongly convex. However, in our
setting, only individual convexity is assumed. This means that their
proof cannot be applied. As such, our analysis substantially departs
from the existing approach.
\end{rem}
Equipped with the new virtual sequence introduced above, we first
present the following Lemma \ref{lem:II-descent}.
\begin{lem}
\label{lem:II-descent}Under Assumptions \ref{assu:minimizer}, \ref{assu:convexity},
and \ref{assu:smoothness}, suppose $\eta_{k}\leq\frac{1}{2\Lmax},\forall k\in\left[K\right]$,
then for any $k\in\left[K\right]$, $\ShufflingSGD$ (Algorithm \ref{alg:Shuffling-SGD})
guarantees that
\[
\eta_{k}\sum_{i=1}^{n}\Breg_{\pi_{k}^{i}}(\bx_{k}^{i},\bx_{\star})\leq\left\Vert \bx_{k}^{1}-\bx_{\star}\right\Vert ^{2}-\left\Vert \bx_{k+1}^{1}-\bx_{\star}\right\Vert ^{2}+2\eta_{k}\sum_{i=1}^{n}\Breg_{\pi_{k}^{i}}(\by_{k}^{i},\bx_{\star}),
\]
where $\by_{k}^{i}$ is defined in (\ref{eq:main-virtual-y}). 
\end{lem}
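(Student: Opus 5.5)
The plan is to compare the real trajectory with the virtual sequence $\by_{k}^{i}$ from (\ref{eq:main-virtual-y}) and to track $\Vert\bx_{k}^{i}-\by_{k}^{i}\Vert^{2}$ along one epoch. By (\ref{eq:main-virtual-y-prop}), $\by_{k}^{1}=\by_{k}^{n+1}=\bx_{\star}$. So the endpoint quantities are exactly $\Vert\bx_{k}^{1}-\bx_{\star}\Vert^{2}$ and $\Vert\bx_{k+1}^{1}-\bx_{\star}\Vert^{2}$. It therefore suffices to prove the per-step inequality
\[
\left\Vert \bx_{k}^{i+1}-\by_{k}^{i+1}\right\Vert ^{2}\leq\left\Vert \bx_{k}^{i}-\by_{k}^{i}\right\Vert ^{2}-\eta_{k}\Breg_{\pi_{k}^{i}}(\bx_{k}^{i},\bx_{\star})+2\eta_{k}\Breg_{\pi_{k}^{i}}(\by_{k}^{i},\bx_{\star})
\]
and then sum over $i\in\left[n\right]$, so that the left-hand side telescopes.

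For the per-step bound, write $h=f_{\pi_{k}^{i}}$, $\bx=\bx_{k}^{i}$, $\by=\by_{k}^{i}$, and $\mathbf{g}=\nabla h(\bx)-\nabla h(\bx_{\star})$. Subtracting the two update rules gives $\bx_{k}^{i+1}-\by_{k}^{i+1}=\bx-\by-\eta_{k}\mathbf{g}$. Expanding the square,
\[
\left\Vert \bx_{k}^{i+1}-\by_{k}^{i+1}\right\Vert ^{2}=\Vert\bx-\by\Vert^{2}-2\eta_{k}\langle\mathbf{g},\bx-\by\rangle+\eta_{k}^{2}\Vert\mathbf{g}\Vert^{2}.
\]
Split $\langle\mathbf{g},\bx-\by\rangle=\langle\mathbf{g},\bx-\bx_{\star}\rangle+\langle\mathbf{g},\bx_{\star}-\by\rangle$. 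For the first piece, $\langle\mathbf{g},\bx-\bx_{\star}\rangle=\Breg_{h}(\bx,\bx_{\star})+\Breg_{h}(\bx_{\star},\bx)$. For the second piece, a direct expansion of the definitions gives the three-point identity
\[
\langle\nabla h(\bx)-\nabla h(\bx_{\star}),\bx_{\star}-\by\rangle=\Breg_{h}(\by,\bx)-\Breg_{h}(\by,\bx_{\star})-\Breg_{h}(\bx_{\star},\bx).
\]
Adding the two pieces yields $\langle\mathbf{g},\bx-\by\rangle=\Breg_{h}(\bx,\bx_{\star})+\Breg_{h}(\by,\bx)-\Breg_{h}(\by,\bx_{\star})$. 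Convexity (Assumption \ref{assu:convexity}) makes $\Breg_{h}(\by,\bx)\geq0$, so this term can be dropped.

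For the quadratic term, Lemma \ref{lem:co-coercivity-cvx} (with the roles of $\bx$ and $\bx_{\star}$ as stated) gives $\Vert\mathbf{g}\Vert^{2}\leq2L_{\pi_{k}^{i}}\Breg_{h}(\bx,\bx_{\star})$. Since $\eta_{k}\leq\frac{1}{2\Lmax}$, this gives $\eta_{k}^{2}\Vert\mathbf{g}\Vert^{2}\leq\eta_{k}\Breg_{h}(\bx,\bx_{\star})$. Combining, we get $-2\eta_{k}\Breg_{h}(\bx,\bx_{\star})+2\eta_{k}\Breg_{h}(\by,\bx_{\star})+\eta_{k}\Breg_{h}(\bx,\bx_{\star})$, which is exactly the claimed per-step inequality.

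The main obstacle is spotting that the cross term $\langle\mathbf{g},\bx_{\star}-\by\rangle$ should be handled by the exact three-point Bregman identity rather than by Cauchy--Schwarz or Young. Young's inequality would lose a factor tied to $\Vert\by-\bx_{\star}\Vert$ and would not give the clean coefficient $2\eta_{k}$ on $\Breg_{\pi_{k}^{i}}(\by_{k}^{i},\bx_{\star})$. Once that identity is in place, the argument is routine. It uses no randomness, so the inequality holds pathwise for every permutation.
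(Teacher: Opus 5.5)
Your proof is correct and follows the paper's argument essentially step for step. The paper also tracks $\Vert\bx_k^i-\by_k^i\Vert^2$ and rewrites the cross term via the same three-point Bregman identity, dropping $\Breg_{\pi_k^i}(\by_k^i,\bx_k^i)\ge 0$. It then bounds the quadratic term by co-coercivity with $\eta_k\le\frac{1}{2\Lmax}$ and telescopes using $\by_k^1=\by_k^{n+1}=\bx_\star$ and $\bx_k^{n+1}=\bx_{k+1}^1$.
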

As discussed earlier, we intentionally treat each epoch of $\ShufflingSGD$
as a single step. Hence, compared with Lemma \ref{lem:I-descent}
used to prove the first bound, Lemma \ref{lem:II-descent} is in a
different flavor, which shows the progress made by Algorithm \ref{alg:Shuffling-SGD}
over an entire epoch.

Based on the form of Lemma \ref{lem:II-descent}, two tasks naturally
arise. The first is to lower bound the term $\sum_{i=1}^{n}\Breg_{\pi_{k}^{i}}(\bx_{k}^{i},\bx_{\star})$
on the L.H.S. by $\sum_{i=1}^{n}\Breg(\bx_{k}^{i},\bx_{\star})=\sum_{i=1}^{n}f(\bx_{k}^{i})-f_{\star}$.
The second is to upper bound the residual term $\sum_{i=1}^{n}\Breg_{\pi_{k}^{i}}(\by_{k}^{i},\bx_{\star})$
on the R.H.S.

We address the first task by establishing the following Lemma \ref{lem:II-core},
a novel inequality upper bounding each $\E\left[\Breg(\bx_{k}^{i},\bx_{\star})\right]$
in terms of $\E\left[\Breg_{\pi_{k}^{i}}(\bx_{k}^{i},\bx_{\star})\right]$,
$\frac{1}{n}\sum_{j<i}\E\left[\Breg_{\pi_{k}^{j}}(\bx_{k}^{j},\bx_{\star})\right]$,
and $\sigma_{\star}^{2}$.
\begin{lem}
\label{lem:II-core}Under Assumptions \ref{assu:minimizer}, \ref{assu:convexity},
and \ref{assu:smoothness}, suppose $\RR$ is employed with $\eta_{k}\leq\frac{1}{2\Lmax},\forall k\in\left[K\right]$,
then for any $k\in\left[K\right]$ and $i\in\left[n\right]$, $\ShufflingSGD$
(Algorithm \ref{alg:Shuffling-SGD}) guarantees that
\[
\E\left[\Breg(\bx_{k}^{i},\bx_{\star})\right]\leq2\E\left[\Breg_{\pi_{k}^{i}}(\bx_{k}^{i},\bx_{\star})\right]+\frac{4}{n}\sum_{j<i}\E\left[\Breg_{\pi_{k}^{j}}(\bx_{k}^{j},\bx_{\star})\right]+12\eta_{k}^{2}(i-1)\Lavg\sigma_{\star}^{2}.
\]
\end{lem}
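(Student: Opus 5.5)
We apply Lemma \ref{lem:core-function} with the choice $\ell_{i}(\bx)\defeq\Breg_{i}(\bx,\bx_{\star})=f_{i}(\bx)-f_{i}(\bx_{\star})-\langle\nabla f_{i}(\bx_{\star}),\bx-\bx_{\star}\rangle$. Its average is $\ell(\bx)=\Breg(\bx,\bx_{\star})$, because $\nabla f(\bx_{\star})=\bzero$. Each $\ell_{i}$ is convex, $L_{i}$-smooth, and minimized at $\bx_{\star}$ with $\ell_{i}(\bx_{\star})=0$. Lemma \ref{lem:core-function} then gives
\[
\E\left[\Breg(\bx_{k}^{i},\bx_{\star})\right]=\E\left[\Breg_{\pi_{k}^{i}}(\bx_{k}^{i},\bx_{\star})\right]+\frac{1}{n}\sum_{j<i}\E\left[\ell_{\pi_{k}^{i}}(\bx_{k}^{i}(i,j))-\ell_{\pi_{k}^{i}}(\bx_{k}^{i})\right].
\]
It therefore suffices to bound each difference $\ell_{\pi_{k}^{i}}(\bx_{k}^{i}(i,j))-\ell_{\pi_{k}^{i}}(\bx_{k}^{i})$.

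Write $p=\pi_{k}^{i}$ and $\Delta=\bx_{k}^{i}(i,j)-\bx_{k}^{i}$. By $L_{p}$-smoothness of $\ell_{p}$ and Young's inequality,
\[
\ell_{p}(\bx_{k}^{i}+\Delta)-\ell_{p}(\bx_{k}^{i})\leq\langle\nabla\ell_{p}(\bx_{k}^{i}),\Delta\rangle+\frac{L_{p}}{2}\|\Delta\|^{2}\leq\frac{\|\nabla\ell_{p}(\bx_{k}^{i})\|^{2}}{2L_{p}}\cdot c+\frac{(1/c+1)L_{p}}{2}\|\Delta\|^{2}.
\]
Co-coercivity (Lemma \ref{lem:co-coercivity-cvx}) applied to $\ell_{p}$ at its minimizer gives $\|\nabla\ell_{p}(\bx)\|^{2}\leq2L_{p}\ell_{p}(\bx)$. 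Hence the first term is at most $c\,\ell_{p}(\bx_{k}^{i})$. Summed over $j<i$ with the $\frac1n$ factor, this contributes at most $c\,\Breg_{\pi_{k}^{i}}(\bx_{k}^{i},\bx_{\star})$. With $c=1$ this produces the coefficient $2$ on the first term.

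For $\|\Delta\|^{2}$ we use Lemma \ref{lem:core-stability}, valid since $\eta_{k}\leq\frac{1}{2\Lmax}\le\frac{2}{\Lmax}$:
\[
\|\Delta\|\leq\eta_{k}\|\nabla f_{\pi_{k}^{i}}(\bx_{k}^{j})-\nabla f_{\pi_{k}^{j}}(\bx_{k}^{j})\|.
\]
We split this gradient difference through $\bx_{\star}$:
\[
\nabla f_{p}(\bx_{k}^{j})-\nabla f_{q}(\bx_{k}^{j})=\nabla\ell_{p}(\bx_{k}^{j})-\nabla\ell_{q}(\bx_{k}^{j})+\nabla f_{p}(\bx_{\star})-\nabla f_{q}(\bx_{\star}),\qquad q=\pi_{k}^{j}.
\]
Its squared norm is at most $4\|\nabla\ell_{p}(\bx_{k}^{j})\|^{2}+4\|\nabla\ell_{q}(\bx_{k}^{j})\|^{2}+2\|\nabla f_{p}(\bx_{\star})-\nabla f_{q}(\bx_{\star})\|^{2}$. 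Multiplying by $L_{p}\eta_{k}^{2}$ and using co-coercivity, the three pieces are handled as follows.
\begin{itemize}
\item The $q$-term gives $\eta_{k}^{2}L_{p}L_{q}\,\Breg_{q}(\bx_{k}^{j},\bx_{\star})\lesssim\Breg_{\pi_{k}^{j}}(\bx_{k}^{j},\bx_{\star})$, since $\eta_{k}^{2}L_{p}L_{q}\le\frac14$. This is the desired $\frac4n\sum_{j<i}$ term.
\item The $\bx_{\star}$-term gives $\eta_{k}^{2}L_{p}\|\nabla f_{p}(\bx_{\star})-\nabla f_{q}(\bx_{\star})\|^{2}$. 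In expectation over the uniformly random pair $(p,q)$ of distinct indices, $\E[L_{p}\|\nabla f_{q}(\bx_{\star})\|^{2}]\le\frac{1}{n(n-1)}\sum_{a\neq b}L_{a}\|\nabla f_{b}(\bx_{\star})\|^{2}\leq\frac{n}{n-1}\Lavg\sigma_{\star}^{2}$. The $\E[L_{p}\|\nabla f_{p}(\bx_{\star})\|^{2}]$ part must be handled the same way, and this is where extra care is needed.
\end{itemize}
Summing over $j<i$ with the factor $\frac1n$ then yields roughly $\eta_{k}^{2}\Lavg\sigma_{\star}^{2}(i-1)$ up to constants, matching $12\eta_{k}^{2}(i-1)\Lavg\sigma_{\star}^{2}$.

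\textbf{Main obstacle.} The troublesome piece is the term $\eta_{k}^{2}L_{p}^{2}\,\ell_{p}(\bx_{k}^{j})$, where $p=\pi_{k}^{i}$ is evaluated at the earlier iterate $\bx_{k}^{j}$; it is not one of the available quantities $\Breg_{\pi_{k}^{j}}(\bx_{k}^{j},\bx_{\star})$. To handle it we exploit symmetry: conditioned on $\pi_{k}^{1},\dots,\pi_{k}^{j-1}$, the iterate $\bx_{k}^{j}$ is fixed, and $\pi_{k}^{i}$ and $\pi_{k}^{j}$ are identically distributed over the remaining indices. Therefore $\E[\ell_{\pi_{k}^{i}}(\bx_{k}^{j})]=\E[\ell_{\pi_{k}^{j}}(\bx_{k}^{j})]$. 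The obstruction is the weight $L_{p}^{2}$, which we bound by $L_{p}\Lmax$ before invoking the symmetry, so that $\eta_{k}^{2}L_{p}^{2}\le\eta_{k}^{2}\Lmax^{2}\le\frac14$.

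The same exchangeability handles $\E[L_{p}\|\nabla f_{p}(\bx_{\star})\|^{2}]$, if necessary after first bounding $L_{p}$ by $\Lmax$ and absorbing via $\eta_{k}\Lmax\le\frac12$. Should the $\sigma_{\star}^{2}$ coefficient then come out with $\Lmax$ in place of $\Lavg$, we would instead attach $L_{p}$ only to the cross-index part and keep the original form. Finally, we collect the constants, choosing the Young parameters so that they total $2$, $4$ and $12$.
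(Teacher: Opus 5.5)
Apart from one step, your argument is the paper's proof. You use Lemma~\ref{lem:core-function}; your choice $\ell_i=\Breg_i(\cdot,\bx_{\star})$ differs from the paper's $\ell_i=f_i-\langle\nabla f_i(\bx_{\star}),\cdot\rangle$ only by constants and gives the left-hand side directly. You then follow the same steps: smoothness plus Young with $c=1$, Lemma~\ref{lem:core-stability}, the split through $\bx_{\star}$, co-coercivity with $\eta_k\le\frac{1}{2\Lmax}$, the exchangeability $\E[\Breg_{\pi_k^i}(\bx_k^j,\bx_{\star})]=\E[\Breg_{\pi_k^j}(\bx_k^j,\bx_{\star})]$, and the cross-term bound $\E[L_{\pi_k^i}\|\nabla f_{\pi_k^j}(\bx_{\star})\|^2]\le\frac{n}{n-1}\Lavg\sigma_{\star}^2$.

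The gap is the diagonal term $\E[L_{\pi_k^i}\|\nabla f_{\pi_k^i}(\bx_{\star})\|^2]=\frac1n\sum_{l}L_l\|\nabla f_l(\bx_{\star})\|^2$. You leave it open, and none of your suggested fixes works:
\begin{itemize}
\item There is nothing to exchange, since a single random index appears in both factors. The quantity really can be of order $\Lmax\sigma_{\star}^2$, i.e.\ about $n\Lavg\sigma_{\star}^2$ when one $L_l$ dominates.
\item Absorbing via $\eta_k\Lmax\le\frac12$ leaves a contribution of order $\eta_k\sigma_{\star}^2/n$ per $j$. This is not $O(\eta_k^2\Lavg\sigma_{\star}^2)$ once $\eta_k n\Lavg$ is small, which is exactly the regime Theorem~\ref{thm:II-rate} targets.
\item You cannot attach $L_{\pi_k^i}$ to only part of the gradient difference, because it multiplies all of $\|\bx_k^i(i,j)-\bx_k^i\|^2$.
\end{itemize}

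The missing observation is that no per-summand $\Lavg$ bound is needed. You are effectively discarding the $\frac1n$ prefactor from Lemma~\ref{lem:core-function}: a per-$j$ bound of $O(\eta_k^2\Lavg\sigma_{\star}^2)$ would give $\frac{i-1}{n}$, not the $(i-1)$ in the statement, and that factor of $n$ is the available slack. The paper simply uses $\frac1n\sum_l L_l\|\nabla f_l(\bx_{\star})\|^2\le\Lmax\sigma_{\star}^2\le n\Lavg\sigma_{\star}^2$.

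With your split, the expected noise per $j$ is then at most $4\eta_k^2(\Lmax+\frac{n}{n-1}\Lavg)\sigma_{\star}^2\le4\eta_k^2(n+2)\Lavg\sigma_{\star}^2$. Here $\frac{n}{n-1}\le 2$ for $n\ge2$, and the case $n=1$ is vacuous since the sum over $j<i$ is empty. Applying $\frac1n\sum_{j<i}$ turns this into $\frac{4(n+2)}{n}\eta_k^2(i-1)\Lavg\sigma_{\star}^2\le12\eta_k^2(i-1)\Lavg\sigma_{\star}^2$, which is where the constant $12$ comes from. The constants $2$ and $4$ come out of your argument as written.
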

To the best of our knowledge, we are the first to obtain an inequality
in such a form. Intuitively, Lemma \ref{lem:II-core} says that each
$\E\left[\Breg(\bx_{k}^{i},\bx_{\star})\right]$ differs from its
stochastic counterpart $\E\left[\Breg_{\pi_{k}^{i}}(\bx_{k}^{i},\bx_{\star})\right]$
by at most a multiplicative constant, together with an average of
the preceding terms $\E\left[\Breg_{\pi_{k}^{j}}(\bx_{k}^{j},\bx_{\star})\right]$
satisfying $j<i$ (not an exact average due to the coefficient $4/n$),
plus an additional term involving $\sigma_{\star}^{2}$. Summing the
inequality in Lemma \ref{lem:II-core} from $i=1$ to $n$ yields
a meaningful lower bound on $\E\left[\sum_{i=1}^{n}\Breg_{\pi_{k}^{i}}(\bx_{k}^{i},\bx_{\star})\right]$
based on $\E\left[\sum_{i=1}^{n}\Breg(\bx_{k}^{i},\bx_{\star})\right]$
and $\sigma_{\star}^{2}$. 

The proof of Lemma \ref{lem:II-core} is rather technical, so we skip
the discussion here. For details, we kindly refer the interested reader
to Appendix \ref{sec:Bound-II}.

Lastly, we need to bound the residual term $\sum_{i=1}^{n}\Breg_{\pi_{k}^{i}}(\by_{k}^{i},\bx_{\star})$
in Lemma \ref{lem:II-descent}. The inequality obtained is given in
Lemma \ref{lem:II-residual} below.
\begin{lem}
\label{lem:II-residual}Under Assumptions \ref{assu:minimizer} and
\ref{assu:smoothness}, suppose $\RR$ is employed, then for any $k\in\left[K\right]$,
$\ShufflingSGD$ (Algorithm \ref{alg:Shuffling-SGD}) guarantees that
\[
\E\left[\sum_{i=1}^{n}\Breg_{\pi_{k}^{i}}(\by_{k}^{i},\bx_{\star})\right]\leq\frac{\eta_{k}^{2}n^{2}\Lavg\sigma_{\star}^{2}}{6},
\]
where $\by_{k}^{i}$ is defined in (\ref{eq:main-virtual-y}).
\end{lem}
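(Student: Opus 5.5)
The plan is to bound each Bregman term by smoothness and reduce everything to a second-moment computation for sampling without replacement. By (\ref{eq:main-virtual-y}) and $\by_k^1=\bx_\star$ from (\ref{eq:main-virtual-y-prop}), for every $i\in[n]$,
\[
\by_k^i-\bx_\star=-\eta_k\sum_{j<i}\nabla f_{\pi_k^j}(\bx_\star).
\]
Assumption \ref{assu:smoothness} gives $\Breg_{\pi_k^i}(\by_k^i,\bx_\star)\le\frac{L_{\pi_k^i}}{2}\|\by_k^i-\bx_\star\|^2$; convexity is not needed here, consistent with the hypotheses of Lemma \ref{lem:II-residual}. Writing $\mathbf{g}_m\defeq\nabla f_m(\bx_\star)$, Assumption \ref{assu:minimizer} gives $\sum_{m=1}^n\mathbf{g}_m=n\nabla f(\bx_\star)=\bzero$ and $\frac1n\sum_m\|\mathbf{g}_m\|^2=\sigma_\star^2$. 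The claim therefore reduces to
\[
\sum_{i=1}^n\E\Bigl[L_{\pi_k^i}\Bigl\|\sum_{j<i}\mathbf{g}_{\pi_k^j}\Bigr\|^2\Bigr]\le\frac{n^2\Lavg\sigma_\star^2}{3}.
\]

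The difficulty is that $L_{\pi_k^i}$ and the partial sum are correlated. I will decouple them by conditioning on $\pi_k^i=m$. Given this event, $\{\pi_k^j\}_{j<i}$ is a uniform $(i-1)$-subset of $[n]\setminus\{m\}$. That population has sum $-\mathbf{g}_m$, mean $-\mathbf{g}_m/(n-1)$, and empirical variance $v_m=\frac{1}{n-1}\sum_{l\ne m}\|\mathbf{g}_l\|^2-\frac{\|\mathbf{g}_m\|^2}{(n-1)^2}$. The standard without-replacement formula then gives
\[
\E\Bigl[\Bigl\|\sum_{j<i}\mathbf{g}_{\pi_k^j}\Bigr\|^2\Bigm|\pi_k^i=m\Bigr]=\frac{(i-1)^2}{(n-1)^2}\|\mathbf{g}_m\|^2+\frac{(i-1)(n-i)}{n-2}v_m,
\]
with the obvious modification when $n\le2$.

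Next I need a bound $c_i\,\sigma_\star^2$ on this conditional second moment that is uniform in $m$. The only $m$-dependence is through $\|\mathbf{g}_m\|^2$ and $\sum_{l\ne m}\|\mathbf{g}_l\|^2=n\sigma_\star^2-\|\mathbf{g}_m\|^2$, and the expression is affine in $\|\mathbf{g}_m\|^2\in[0,n\sigma_\star^2]$. So the maximum is attained at an endpoint, which yields an explicit $c_i$. With such a uniform bound in hand, averaging $L_m$ over $m$ with weight $1/n$ gives exactly $\Lavg c_i\sigma_\star^2$. It then remains to check $\sum_i c_i\le n^2/3$. For comparison, the unconditional moment is $\frac{(i-1)(n-i+1)}{n-1}\sigma_\star^2$, whose sum over $i$ is $\frac{n(n+1)}{6}\sigma_\star^2$, so there is roughly a factor of two of slack.

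I expect the main obstacle to be the uniform-in-$m$ step. At the endpoint $\|\mathbf{g}_m\|^2=n\sigma_\star^2$ the first term is of order $\frac{(i-1)^2n}{(n-1)^2}\sigma_\star^2$, which is not summable to $n^2/3$ for large $i$. I would repair this using the identity $\sum_{j<i}\mathbf{g}_{\pi_k^j}=-\mathbf{g}_m-\sum_{j>i}\mathbf{g}_{\pi_k^j}$, which holds on $\{\pi_k^i=m\}$. Concretely, I would use the prefix representation for $i\le(n+1)/2$ and the suffix representation for larger $i$. In the suffix case I would apply the same conditional formula to the $n-i$ later indices, and control the extra $\mathbf{g}_m$ through the exact cross term (the suffix has conditional mean $-\frac{n-i}{n-1}\mathbf{g}_m$) rather than through a crude $\|a+b\|^2\le2\|a\|^2+2\|b\|^2$. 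If these exact computations still leave a term proportional to $L_m\|\mathbf{g}_m\|^2$ with a negative coefficient, it can simply be dropped. A positive coefficient can be bounded using $\frac1n\sum_mL_m\|\mathbf{g}_m\|^2$ only after showing it is dominated by the other terms, and I expect this bookkeeping to fix the constant $\frac16$.
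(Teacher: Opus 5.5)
\textbf{Verdict: there is a genuine gap.} Your smoothness step and your reduction to
$\sum_i\E[L_{\pi_k^i}\|\sum_{j<i}\mathbf{g}_{\pi_k^j}\|^2]\le n^2\Lavg\sigma_\star^2/3$
are exactly the paper's first step. The paper then cites Lemma E.1 of \citet{pmlr-v235-liu24cg} for this inequality. Your conditional formula is also correct. The problem is that your route for the last step cannot produce this constant.

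\textbf{Why the uniform-in-$m$ bound fails.} For $n\ge3$ your formula simplifies to $\alpha_i\sigma_\star^2+\beta_i\|\mathbf{g}_m\|^2$, where
\[
\alpha_i=\frac{n(i-1)(n-i)}{(n-1)(n-2)},\qquad\beta_i=\frac{(i-1)(2i-n-2)}{(n-1)(n-2)}.
\]
So the dependence on $m$ is only through $\|\mathbf{g}_m\|^2\in[0,(n-1)\sigma_\star^2]$. The upper end (not $n\sigma_\star^2$) follows from $\sum_m\mathbf{g}_m=\bzero$ and is attained. Since $\beta_i>0$ for $i>(n+2)/2$, the best uniform constant there is $c_i=(i-1)^2/(n-1)$. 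Summing gives $\sum_ic_i\approx 3n^2/8>n^2/3$ for large $n$, and already $7/2>3$ at $n=3$.

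\textbf{The proposed repairs do not help.} The prefix/suffix switch changes nothing: on $\{\pi_k^i=m\}$, the vector $-\mathbf{g}_m-\sum_{j>i}\mathbf{g}_{\pi_k^j}$ is the prefix sum itself. An exact computation therefore returns the same affine function; your cross term gives $(1-\frac{n-i}{n-1})^2=\frac{(i-1)^2}{(n-1)^2}$. Dropping the negative coefficients at small $i$ while keeping the positive ones at large $i$ is just the uniform bound again, and it loses the same amount.

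\textbf{The missing idea: sum over $i$ before bounding anything in $m$.} Write
$\E[L_{\pi_k^i}\|\sum_{j<i}\mathbf{g}_{\pi_k^j}\|^2]=\frac1n\sum_mL_m(\alpha_i\sigma_\star^2+\beta_i\|\mathbf{g}_m\|^2)$.
Then use $\sum_{i=1}^n\alpha_i=\frac{n^2}{6}$ and $\sum_{i=1}^n\beta_i=\frac n6$; the negative $\beta_i$ at small $i$ cancel most of the positive ones. This gives the exact identity
\[
\sum_{i=1}^{n}\E\Bigl[L_{\pi_k^i}\Bigl\|\sum_{j<i}\mathbf{g}_{\pi_k^j}\Bigr\|^2\Bigr]=\frac{n^2\Lavg\sigma_\star^2}{6}+\frac{1}{6}\sum_{m=1}^{n}L_m\|\mathbf{g}_m\|^2 .
\]
Now $\sum_mL_m\|\mathbf{g}_m\|^2\le n\Lmax\sigma_\star^2\le n^2\Lavg\sigma_\star^2$ finishes the bound. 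After the factor $\frac{\eta_k^2}{2}$ this yields the constant $\frac16$. The cases $n\le2$ are a direct check; for $n=2$ the left side equals $\Lavg\sigma_\star^2$.

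\textbf{Why the cancellation is essential.} With one dominant $L_m$ and $\|\mathbf{g}_m\|^2=(n-1)\sigma_\star^2$, the identity shows the target $n^2\Lavg\sigma_\star^2/3$ is nearly attained. So for nonuniform $L_i$ there is no factor-two slack, and the cancellation across $i$ is essential rather than optional bookkeeping.
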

Lemma \ref{lem:II-residual} can be derived in a relatively easy way,
as it can be deduced from existing works. In particular, thanks to
\citet{pmlr-v235-liu24cg}, we obtain a bound that depends only on
the average smoothness parameter $\Lavg$ rather than the maximum
smoothness parameter $\Lmax$.

\paragraph{Final proof.}

Armed with Lemmas \ref{lem:II-descent}, \ref{lem:II-core}, and \ref{lem:II-residual}
above, we are finally able to prove Theorem \ref{thm:II-rate}.

\begin{proof}[Proof of Theorem \ref{thm:II-rate}]
First, we invoke Lemma \ref{lem:II-descent} to have
\[
\eta_{k}\sum_{i=1}^{n}\Breg_{\pi_{k}^{i}}(\bx_{k}^{i},\bx_{\star})\leq\left\Vert \bx_{k}^{1}-\bx_{\star}\right\Vert ^{2}-\left\Vert \bx_{k+1}^{1}-\bx_{\star}\right\Vert ^{2}+2\eta_{k}\sum_{i=1}^{n}\Breg_{\pi_{k}^{i}}(\by_{k}^{i},\bx_{\star}),
\]
where $\by_{k}^{i}$ is defined in (\ref{eq:main-virtual-y}). Take
expectations on both sides and apply Lemma \ref{lem:II-residual}
to yield
\begin{equation}
\eta_{k}\E\left[\sum_{i=1}^{n}\Breg_{\pi_{k}^{i}}(\bx_{k}^{i},\bx_{\star})\right]\leq\E\left[\left\Vert \bx_{k}^{1}-\bx_{\star}\right\Vert ^{2}\right]-\E\left[\left\Vert \bx_{k+1}^{1}-\bx_{\star}\right\Vert ^{2}\right]+\frac{\eta_{k}^{3}n^{2}\Lavg\sigma_{\star}^{2}}{3}.\label{eq:II-rate-1}
\end{equation}

Next, we multiply both sides of the inequality in Lemma \ref{lem:II-core}
by $\eta_{k}$ and sum it up from $i=1$ to $n$ to obtain
\begin{equation}
\eta_{k}\E\left[\sum_{i=1}^{n}\Breg(\bx_{k}^{i},\bx_{\star})\right]\leq6\eta_{k}\E\left[\sum_{i=1}^{n}\Breg_{\pi_{k}^{i}}(\bx_{k}^{i},\bx_{\star})\right]+6\eta_{k}^{3}n^{2}\Lavg\sigma_{\star}^{2}.\label{eq:II-rate-2}
\end{equation}
Combine (\ref{eq:II-rate-1}) and (\ref{eq:II-rate-2}) to have
\begin{align*}
\eta_{k}\E\left[\sum_{i=1}^{n}\Breg(\bx_{k}^{i},\bx_{\star})\right] & \leq6\E\left[\left\Vert \bx_{k}^{1}-\bx_{\star}\right\Vert ^{2}\right]-6\E\left[\left\Vert \bx_{k+1}^{1}-\bx_{\star}\right\Vert ^{2}\right]+8\eta_{k}^{3}n^{2}\Lavg\sigma_{\star}^{2}\\
\Rightarrow\E\left[\sum_{k=1}^{K}\sum_{i=1}^{n}\eta_{k}\Breg(\bx_{k}^{i},\bx_{\star})\right] & \leq6\left\Vert \bx_{1}^{1}-\bx_{\star}\right\Vert ^{2}+\sum_{k=1}^{K}8\eta_{k}^{3}n^{2}\Lavg\sigma_{\star}^{2}.
\end{align*}

Finally, we observe that $\Breg(\bx_{k}^{i},\bx_{\star})=f(\bx_{k}^{i})-f_{\star}$,
divide both sides by $n\sum_{k=1}^{K}\eta_{k}$, apply the convexity
of $f$, and use the definition of $\bar{\bx}_{K}$ in (\ref{eq:avg-x})
to conclude.
\end{proof}

\subsection{Proofs of Theorem \ref{thm:main-RR-cvx} and Corollary \ref{cor:main-RR-cvx-constant-stepsize}\label{subsec:Proof}}

With the previous preparation, Theorem \ref{thm:main-RR-cvx} follows
immediately.

\begin{proof}[Proof of Theorem \ref{thm:main-RR-cvx}]
Combine Theorems \ref{thm:I-rate} and \ref{thm:II-rate} to conclude.
\end{proof}

We next derive Corollary \ref{cor:main-RR-cvx-constant-stepsize}
directly from Theorem \ref{thm:main-RR-cvx}.

\begin{proof}[Proof of Corollary \ref{cor:main-RR-cvx-constant-stepsize}]
With a constant stepsize $\eta_{k}=\eta\leq\frac{1}{6\Lmax},\forall k\in\left[K\right]$,
Theorem \ref{thm:main-RR-cvx} reduces to
\[
\E\left[f(\bar{\bx}_{K})-f_{\star}\right]\leq\frac{6\left\Vert \bx_{1}^{1}-\bx_{\star}\right\Vert ^{2}}{\eta nK}+51\min\left\{ \eta,\eta^{2}n\Lavg\right\} \sigma_{\star}^{2}.
\]
Optimizing the R.H.S. of the above inequality over $0<\eta\leq\frac{1}{6\Lmax}$
yields the desired result.
\end{proof}

%% file: conclusion.tex
\section{Conclusion and Future Work}

In this work, we prove that $\ShufflingSGD$ under $\textsf{RR}$
dominates $\textsf{SGD}$ in smooth convex optimization under any
reasonable stepsize after any finite number of epochs. Our main Theorem
\ref{thm:main-RR-cvx} follows from combining two novel convergence
results, whose analysis may each be of independent interest.

Our work suggests several new directions for future research. From
an upper-bound perspective, our current proof is split into two distinct
parts. It is therefore worthwhile to investigate whether Theorem \ref{thm:main-RR-cvx}
can be obtained via a unified analysis. From a lower-bound perspective,
the only existing hardness result for $\RR$ in smooth convex optimization
by \citet{pmlr-v202-cha23a} is established for the stepsize $\eta_{k}$
at most inversely proportional to $n$ and for the number of epochs
$K$ at least proportional to $n$. As such, providing a complete
characterization of the lower bound for $\RR$ under any reasonable
stepsize and any number of epochs remains an important task for future
work.

%% file: appendix.tex
\section{Additional Related Work\label{sec:Additional-Related-Work}}

This section provides additional discussion of the related work. We
mainly focus on smooth convex optimization under $\RR$. As for nonsmooth
convex optimization under $\RR/\SS/\IG$, the reader could refer to
\citet{kibardin1979decomposition,doi:10.1137/S1052623499362111,NEURIPS2022_7bc4f74e,pmlr-v267-liu25ct}.
See also \citet{NIPS2016_c74d97b0} for $\RR$ under structured problems.

The first breakthrough on $\RR$ is by \citet{gurbuzbalaban2021random}
for smooth strongly convex optimization, which, however, requires
each $f_{i}$ to be quadratic or to have a Lipschitz Hessian. Since
then, extensive studies have emerged. Among them, a series of works
continues to study the convergence behavior of $\RR$ in smooth strongly
convex optimization for quadratic objectives or under other additional
conditions \citep{8514028,pmlr-v97-haochen19a,pmlr-v125-safran20a,pmlr-v119-rajput20a,NEURIPS2020_cb8acb1d,NEURIPS2021_803ef568,rajput2022permutationbased}.

To the best of our knowledge, the first work that drops the strong
convexity assumption is \citet{pmlr-v97-nagaraj19a}, which provides
a convergence rate of $\frac{D^{2}}{\eta nK}+\eta G^{2}$ under the
requirement $\eta\leq\frac{2}{L}$ for $L$-smooth $G$-Lipschitz
convex $f_{i}$, where we remind the reader that $D$ denotes the
distance between the initial point and the optimal solution, $\eta$
represents the stepsize, and $K$ is the number of epochs. However,
this rate cannot reflect any advantage of $\RR$ over standard $\SGD$.
Subsequently, two works \citep{NEURIPS2020_c8cc6e90,JMLR:v22:20-1238}
further remove the extra Lipschitz assumption and establish the bound
$\frac{D^{2}}{\eta nK}+\eta^{2}nL\sigma_{\star}^{2}$ under the condition
$\eta\lesssim\frac{1}{nL}$, where $\sigma_{\star}^{2}$ is the gradient
variance at the optimal solution. This rate is faster than standard
$\SGD$ under its required regime and remains the best bound so far.
In fact, it is unimprovable for small $\eta\lesssim\frac{1}{nL}$
and large $K\gtrsim\frac{nL^{2}D^{2}}{\sigma_{\star}^{2}}$ due to
the lower bound of $(\frac{L\sigma_{\star}^{2}D^{4}}{nK^{2}})^{\frac{1}{3}}$
by \citet{pmlr-v202-cha23a}. Recently, \citet{pmlr-v235-liu24cg}
and \citet{ICLR2025_fea9f93f} extend the above rate from the average
iterate to the last iterate (up to additional polylogarithmic factors).

\section{Summary of Notation}

For readability, we recall and summarize the notation used in the
paper.
\begin{itemize}
\item $f=\frac{1}{n}\sum_{i=1}^{n}f_{i}$ is the objective, where each $f_{i}:\R^{d}\to\R$
is differentiable and convex.
\item $\bx_{\star}$ denotes the minimizer of $f$. $f_{\star}=f(\bx_{\star})$
is the optimal function value. $\sigma_{\star}^{2}=\frac{1}{n}\sum_{i=1}^{n}\left\Vert \nabla f_{i}(\bx_{\star})\right\Vert ^{2}$
is the variance of the gradient at the optimal solution.
\item $L_{i}>0$ is the smoothness parameter of $f_{i}$. $\Lavg=\frac{1}{n}\sum_{i=1}^{n}L_{i}$
is the average smoothness parameter. $\Lmax=\max_{i\in\left[n\right]}L_{i}$
is the maximum smoothness parameter.
\item $\Breg_{h}(\bx,\by)=h(\bx)-h(\by)-\left\langle \nabla h(\by),\bx-\by\right\rangle $
denotes the Bregman divergence induced by a real-valued differentiable
function $h$ (not necessarily convex). We write $\Breg$ (resp. $\Breg_{i}$)
as shorthand for $\Breg_{f}$ (resp. $\Breg_{f_{i}}$).
\end{itemize}

\section{Missing Proofs of Core Lemmas\label{sec:Core-Lemmas}}

This section contains the missing proofs of the two core lemmas presented
in Subsection \ref{subsec:Core-Lemmas}. 

Before providing the proofs, we recall two notions introduced in (\ref{eq:main-virtual-pi})
and (\ref{eq:main-virtual-x}), respectively. Given a permutation
$\pi$ of $\left[n\right]$ and two indices $i,j\in\left[n\right]$
satisfying $j\leq i$, $\pi(i,j)$ is the permutation generated by
exchanging the elements $\pi_{i}$ and $\pi_{j}$ in $\pi$, i.e.,
\begin{equation}
\pi(i,j)=\left(\pi^{1},\mydots,\pi^{j-1},\pi^{i},\pi^{j+1},\mydots,\pi^{i-1},\pi^{j},\pi^{i+1},\mydots,\pi^{n}\right).\label{eq:virtual-pi}
\end{equation}
For any given $k\in\left[K\right]$, $\bx_{k}^{l}(i,j),\forall l\in\left[n+1\right]$
denotes the trajectory of the $k$-th epoch starting from $\bx_{k}^{1}$,
produced by $\ShufflingSGD$, but under the permutation $\pi_{k}(i,j)$,
i.e.,
\begin{eqnarray}
 & \bx_{k}^{l+1}(i,j)=\bx_{k}^{l}(i,j)-\eta_{k}\nabla f_{\pi_{k}^{l}(i,j)}(\bx_{k}^{l}(i,j)),\forall l\in\left[n\right], & \text{where}\enspace\bx_{k}^{1}(i,j)=\bx_{k}^{1}.\label{eq:virtual-x}
\end{eqnarray}

\subsection{Proof of Lemma \ref{lem:core-function}}

\begin{proof}
Given $k\in\left[K\right]$ and $i\in\left[n\right]$, we first have
the decomposition
\[
\ell(\bx_{k}^{i})-\ell_{\pi_{k}^{i}}(\bx_{k}^{i})=\frac{1}{n}\left(\sum_{j<i}\ell_{\pi_{k}^{j}}(\bx_{k}^{i})-\ell_{\pi_{k}^{i}}(\bx_{k}^{i})+\sum_{j\geq i}\ell_{\pi_{k}^{j}}(\bx_{k}^{i})-\ell_{\pi_{k}^{i}}(\bx_{k}^{i})\right).
\]
Note that $\E\left[\ell_{\pi_{k}^{j}}(\bx_{k}^{i})\right]=\E\left[\ell_{\pi_{k}^{i}}(\bx_{k}^{i})\right]$
holds for any $j\in\left\{ i,\mydots,n\right\} $ under $\RR$, since
$\ell_{\pi_{k}^{j}}(\bx_{k}^{i})$ and $\ell_{\pi_{k}^{i}}(\bx_{k}^{i})$
are equal in distribution conditioning on $\pi_{k}^{1}$ to $\pi_{k}^{i-1}$
and $\pi_{1}$ to $\pi_{k-1}$. Therefore, we obtain
\begin{equation}
\E\left[\ell(\bx_{k}^{i})-\ell_{\pi_{k}^{i}}(\bx_{k}^{i})\right]=\frac{1}{n}\sum_{j<i}\E\left[\ell_{\pi_{k}^{j}}(\bx_{k}^{i})-\ell_{\pi_{k}^{i}}(\bx_{k}^{i})\right].\label{eq:core-function-1}
\end{equation}
For any fixed $j\in\left[i\right]$, it is known that $\pi_{k}$ equals
$\pi_{k}(i,j)$ in distribution (e.g., Lemma C.3 of \citet{pmlr-v267-liu25ct}),
which implies that $(\pi_{k},\bx_{k})$ also equals $(\pi_{k}(i,j),\bx_{k}(i,j))$
in distribution, since the trajectory of the $k$-th epoch generated
by $\ShufflingSGD$ is deterministically determined by the permutation
and stepsize. This implies that
\[
\E\left[\ell_{\pi_{k}^{j}}(\bx_{k}^{i})\right]=\E\left[\ell_{\pi_{k}^{j}(i,j)}(\bx_{k}^{i}(i,j))\right]=\E\left[\ell_{\pi_{k}^{i}}(\bx_{k}^{i}(i,j))\right],
\]
where the last step is due to $\pi_{k}^{j}(i,j)=\pi_{k}^{i}$ from
its definition (\ref{eq:main-virtual-pi}). Hence, we finally obtain
\[
\E\left[\ell(\bx_{k}^{i})-\ell_{\pi_{k}^{i}}(\bx_{k}^{i})\right]=\frac{1}{n}\sum_{j<i}\E\left[\ell_{\pi_{k}^{i}}(\bx_{k}^{i}(i,j))-\ell_{\pi_{k}^{i}}(\bx_{k}^{i})\right].
\]
\end{proof}

\subsection{Proof of Lemma \ref{lem:core-stability}}

\begin{proof}
By the definition of $\pi(i,j)$ in (\ref{eq:main-virtual-pi}), we
know
\begin{equation}
\pi_{k}^{l}(i,j)=\pi_{k}^{l},\forall l\notin\left\{ i,j\right\} \Rightarrow\nabla f_{\pi_{k}^{l}(i,j)}=\nabla f_{\pi_{k}^{l}},\forall l\notin\left\{ i,j\right\} .\label{eq:core-stability-1}
\end{equation}
Therefore, given $l\notin\left\{ i,j\right\} $, by the definition
of $\bx_{k}(i,j)$ (see (\ref{eq:main-virtual-x})) and the update
rule of $\ShufflingSGD$, we have
\begin{align}
\left\Vert \bx_{k}^{l+1}(i,j)-\bx_{k}^{l+1}\right\Vert ^{2}= & \left\Vert \bx_{k}^{l}(i,j)-\bx_{k}^{l}-\eta_{k}\left(\nabla f_{\pi_{k}^{l}(i,j)}(\bx_{k}^{l}(i,j))-\nabla f_{\pi_{k}^{l}}(\bx_{k}^{l})\right)\right\Vert ^{2}\nonumber \\
\overset{(\ref{eq:core-stability-1})}{=} & \left\Vert \bx_{k}^{l}(i,j)-\bx_{k}^{l}-\eta_{k}\left(\nabla f_{\pi_{k}^{l}}(\bx_{k}^{l}(i,j))-\nabla f_{\pi_{k}^{l}}(\bx_{k}^{l})\right)\right\Vert ^{2}\nonumber \\
= & \left\Vert \bx_{k}^{l}(i,j)-\bx_{k}^{l}\right\Vert ^{2}+\eta_{k}^{2}\left\Vert \nabla f_{\pi_{k}^{l}}(\bx_{k}^{l}(i,j))-\nabla f_{\pi_{k}^{l}}(\bx_{k}^{l})\right\Vert ^{2}\nonumber \\
 & -2\eta_{k}\left\langle \nabla f_{\pi_{k}^{l}}(\bx_{k}^{l}(i,j))-\nabla f_{\pi_{k}^{l}}(\bx_{k}^{l}),\bx_{k}^{l}(i,j)-\bx_{k}^{l}\right\rangle \nonumber \\
\overset{(a)}{\leq} & \left\Vert \bx_{k}^{l}(i,j)-\bx_{k}^{l}\right\Vert ^{2}+\left(\eta_{k}^{2}-\frac{2\eta_{k}}{L_{\pi_{k}^{l}}}\right)\left\Vert \nabla f_{\pi_{k}^{l}}(\bx_{k}^{l}(i,j))-\nabla f_{\pi_{k}^{l}}(\bx_{k}^{l})\right\Vert ^{2}\nonumber \\
\overset{(b)}{\leq} & \left\Vert \bx_{k}^{l}(i,j)-\bx_{k}^{l}\right\Vert ^{2},\label{eq:core-stability-2}
\end{align}
where $(a)$ is due to Lemma \ref{lem:co-coercivity-cvx} and $(b)$
holds by $\eta_{k}\leq\frac{2}{\Lmax}\Rightarrow\eta_{k}^{2}-\frac{2\eta_{k}}{L_{\pi_{k}^{l}}}\leq0$.

Apply (\ref{eq:core-stability-2}) from $l=j+1$ to $l=i-1$ to obtain
\begin{align*}
 & \left\Vert \bx_{k}^{i}(i,j)-\bx_{k}^{i}\right\Vert ^{2}\leq\left\Vert \bx_{k}^{j+1}(i,j)-\bx_{k}^{j+1}\right\Vert ^{2}\\
= & \left\Vert \bx_{k}^{j}(i,j)-\bx_{k}^{j}-\eta_{k}\left(\nabla f_{\pi_{k}^{j}(i,j)}(\bx_{k}^{j}(i,j))-\nabla f_{\pi_{k}^{j}}(\bx_{k}^{j})\right)\right\Vert ^{2}\\
\overset{(c)}{=} & \eta_{k}^{2}\left\Vert \nabla f_{\pi_{k}^{j}(i,j)}(\bx_{k}^{j})-\nabla f_{\pi_{k}^{j}}(\bx_{k}^{j})\right\Vert ^{2}\overset{(\ref{eq:main-virtual-pi})}{=}\eta_{k}^{2}\left\Vert \nabla f_{\pi_{k}^{i}}(\bx_{k}^{j})-\nabla f_{\pi_{k}^{j}}(\bx_{k}^{j})\right\Vert ^{2},
\end{align*}
where $(c)$ holds by $\bx_{k}^{j}(i,j)=\bx_{k}^{j}$, since $\bx_{k}^{l}(i,j)=\bx_{k}^{l},\forall l\in\left[j\right]$
implied by (\ref{eq:core-stability-2}) and $\bx_{k}^{1}(i,j)\overset{(\ref{eq:main-virtual-x})}{=}\bx_{k}^{1}$
together.
\end{proof}

\section{Missing Proofs of Lemmas for Bound I\label{sec:Bound-I}}

In this section, we provide the missing proofs of the lemmas presented
in Subsection \ref{subsec:Bound-I}, which were used to prove the
first convergence rate in Theorem \ref{thm:I-rate}.

\subsection{Proof of Lemma \ref{lem:I-descent}}

\begin{proof}
Given $k\in\left[K\right]$ and $i\in\left[n\right]$, by the definition
of $\Breg_{\pi_{k}^{i}}$,
\begin{align*}
f_{\pi_{k}^{i}}(\bx_{k}^{i})-f_{\pi_{k}^{i}}(\bx_{\star}) & =\left\langle \nabla f_{\pi_{k}^{i}}(\bx_{k}^{i}),\bx_{k}^{i}-\bx_{\star}\right\rangle -\Breg_{\pi_{k}^{i}}(\bx_{\star},\bx_{k}^{i})\\
 & =\frac{\left\Vert \bx_{k}^{i}-\bx_{\star}\right\Vert ^{2}-\left\Vert \bx_{k}^{i+1}-\bx_{\star}\right\Vert ^{2}}{2\eta_{k}}+\frac{\eta_{k}}{2}\left\Vert \nabla f_{\pi_{k}^{i}}(\bx_{k}^{i})\right\Vert ^{2}-\Breg_{\pi_{k}^{i}}(\bx_{\star},\bx_{k}^{i}),
\end{align*}
where the second step holds by the update rule of Algorithm \ref{alg:Shuffling-SGD}.
\end{proof}

\subsection{Proof of Lemma \ref{lem:I-core}}

\begin{proof}
We apply Lemma \ref{lem:core-function} with $\ell=f$ and $\ell_{i}=f_{i}$
to have
\begin{equation}
\E\left[f(\bx_{k}^{i})-f_{\pi_{k}^{i}}(\bx_{k}^{i})\right]=\frac{1}{n}\sum_{j<i}\E\left[f_{\pi_{k}^{i}}(\bx_{k}^{i}(i,j))-f_{\pi_{k}^{i}}(\bx_{k}^{i})\right],\label{eq:I-core-1}
\end{equation}
where $\bx_{k}(i,j)$ is defined in (\ref{eq:main-virtual-x}). Next,
by the $L_{\pi_{k}^{i}}$-smoothness of $f_{\pi_{k}^{i}}$ (Assumption
\ref{assu:smoothness}), we know
\begin{align}
 & f_{\pi_{k}^{i}}(\bx_{k}^{i}(i,j))-f_{\pi_{k}^{i}}(\bx_{k}^{i})\nonumber \\
\leq & \left\langle \nabla f_{\pi_{k}^{i}}(\bx_{k}^{i}),\bx_{k}^{i}(i,j)-\bx_{k}^{i}\right\rangle +\frac{L_{\pi_{k}^{i}}}{2}\left\Vert \bx_{k}^{i}(i,j)-\bx_{k}^{i}\right\Vert ^{2}\nonumber \\
\overset{(a)}{\leq} & \eta_{k}\left\Vert \nabla f_{\pi_{k}^{i}}(\bx_{k}^{i})\right\Vert \left\Vert \nabla f_{\pi_{k}^{i}}(\bx_{k}^{j})-\nabla f_{\pi_{k}^{j}}(\bx_{k}^{j})\right\Vert +\frac{\eta_{k}^{2}L_{\pi_{k}^{i}}}{2}\left\Vert \nabla f_{\pi_{k}^{i}}(\bx_{k}^{j})-\nabla f_{\pi_{k}^{j}}(\bx_{k}^{j})\right\Vert ^{2}\nonumber \\
\overset{(b)}{\leq} & \eta_{k}\left\Vert \nabla f_{\pi_{k}^{i}}(\bx_{k}^{i})\right\Vert ^{2}+\frac{\eta_{k}+2\eta_{k}^{2}L_{\pi_{k}^{i}}}{4}\left\Vert \nabla f_{\pi_{k}^{i}}(\bx_{k}^{j})-\nabla f_{\pi_{k}^{j}}(\bx_{k}^{j})\right\Vert ^{2}\nonumber \\
\leq & \eta_{k}\left\Vert \nabla f_{\pi_{k}^{i}}(\bx_{k}^{i})\right\Vert ^{2}+\frac{\eta_{k}+2\eta_{k}^{2}L_{\pi_{k}^{i}}}{2}\left(\left\Vert \nabla f_{\pi_{k}^{i}}(\bx_{k}^{j})\right\Vert ^{2}+\left\Vert \nabla f_{\pi_{k}^{j}}(\bx_{k}^{j})\right\Vert ^{2}\right)\nonumber \\
\overset{(c)}{\leq} & \eta_{k}\left\Vert \nabla f_{\pi_{k}^{i}}(\bx_{k}^{i})\right\Vert ^{2}+\frac{2}{3}\eta_{k}\left(\left\Vert \nabla f_{\pi_{k}^{i}}(\bx_{k}^{j})\right\Vert ^{2}+\left\Vert \nabla f_{\pi_{k}^{j}}(\bx_{k}^{j})\right\Vert ^{2}\right),\label{eq:I-core-2}
\end{align}
where $(a)$ is by Cauchy-Schwarz inequality and Lemma \ref{lem:core-stability},
$(b)$ is due to AM-GM inequality, and $(c)$ holds by $\eta_{k}\leq\frac{1}{6\Lmax}$.
Finally, we plug (\ref{eq:I-core-2}) back into (\ref{eq:I-core-1})
to obtain
\begin{align*}
\E\left[f(\bx_{k}^{i})-f_{\pi_{k}^{i}}(\bx_{k}^{i})\right] & \leq\frac{\eta_{k}}{n}\sum_{j<i}\E\left[\left\Vert \nabla f_{\pi_{k}^{i}}(\bx_{k}^{i})\right\Vert ^{2}+\frac{2}{3}\left\Vert \nabla f_{\pi_{k}^{i}}(\bx_{k}^{j})\right\Vert ^{2}+\frac{2}{3}\left\Vert \nabla f_{\pi_{k}^{j}}(\bx_{k}^{j})\right\Vert ^{2}\right]\\
 & \overset{(d)}{=}\frac{\eta_{k}}{n}\sum_{j<i}\E\left[\left\Vert \nabla f_{\pi_{k}^{i}}(\bx_{k}^{i})\right\Vert ^{2}+\frac{4}{3}\left\Vert \nabla f_{\pi_{k}^{j}}(\bx_{k}^{j})\right\Vert ^{2}\right]\\
 & \leq\eta_{k}\E\left[\left\Vert \nabla f_{\pi_{k}^{i}}(\bx_{k}^{i})\right\Vert ^{2}\right]+\frac{4\eta_{k}}{3n}\sum_{j<i}\E\left[\left\Vert \nabla f_{\pi_{k}^{j}}(\bx_{k}^{j})\right\Vert ^{2}\right],
\end{align*}
where $(d)$ is due to $\E\left[\left\Vert \nabla f_{\pi_{k}^{i}}(\bx_{k}^{j})\right\Vert ^{2}\right]=\E\left[\left\Vert \nabla f_{\pi_{k}^{j}}(\bx_{k}^{j})\right\Vert ^{2}\right]$
when $j<i$.
\end{proof}

\section{Missing Proofs of Lemmas for Bound II\label{sec:Bound-II}}

In this section, we provide the missing proofs of the lemmas presented
in Subsection \ref{subsec:Bound-II}, which were used to prove the
second convergence rate in Theorem \ref{thm:II-rate}. 

Before presenting the proofs, we recall the notion introduced in (\ref{eq:main-virtual-y}).
For any $k\in\left[K\right]$, the virtual sequence $\by_{k}^{i},\forall i\in\left[n+1\right]$
follows the equations,
\begin{eqnarray}
\by_{k}^{i+1}=\by_{k}^{i}-\eta_{k}\nabla f_{\pi_{k}^{i}}(\bx_{\star}),\forall i\in\left[n\right], & \by_{k+1}^{1}=\by_{k}^{n+1}, & \text{where}\enspace\by_{1}^{1}=\bx_{\star}.\label{eq:virtual-y}
\end{eqnarray}
As shown in (\ref{eq:main-virtual-y-prop}), the virtual sequence
satisfies that
\begin{equation}
\by_{k}^{n+1}=\by_{k}^{1}=\bx_{\star},\forall k\in\left[K\right].\label{eq:virtual-y-prop}
\end{equation}

\subsection{Proof of Lemma \ref{lem:II-descent}}

\begin{proof}
Given $k\in\left[K\right]$ and $i\in\left[n\right]$, by the update
rule of Algorithm \ref{alg:Shuffling-SGD} and the definition of $\by_{k}^{i+1}$
in (\ref{eq:main-virtual-y}), we have
\begin{align*}
 & \left\Vert \bx_{k}^{i+1}-\by_{k}^{i+1}\right\Vert ^{2}=\left\Vert \bx_{k}^{i}-\by_{k}^{i}-\eta_{k}\left(\nabla f_{\pi_{k}^{i}}(\bx_{k}^{i})-\nabla f_{\pi_{k}^{i}}(\bx_{\star})\right)\right\Vert ^{2}\\
= & \left\Vert \bx_{k}^{i}-\by_{k}^{i}\right\Vert ^{2}+\eta_{k}^{2}\left\Vert \nabla f_{\pi_{k}^{i}}(\bx_{k}^{i})-\nabla f_{\pi_{k}^{i}}(\bx_{\star})\right\Vert ^{2}+2\eta_{k}\left\langle \nabla f_{\pi_{k}^{i}}(\bx_{\star})-\nabla f_{\pi_{k}^{i}}(\bx_{k}^{i}),\bx_{k}^{i}-\by_{k}^{i}\right\rangle \\
= & \left\Vert \bx_{k}^{i}-\by_{k}^{i}\right\Vert ^{2}+\eta_{k}^{2}\left\Vert \nabla f_{\pi_{k}^{i}}(\bx_{k}^{i})-\nabla f_{\pi_{k}^{i}}(\bx_{\star})\right\Vert ^{2}+2\eta_{k}\left(\Breg_{\pi_{k}^{i}}(\by_{k}^{i},\bx_{\star})-\Breg_{\pi_{k}^{i}}(\by_{k}^{i},\bx_{k}^{i})-\Breg_{\pi_{k}^{i}}(\bx_{k}^{i},\bx_{\star})\right)\\
\overset{(a)}{\leq} & \left\Vert \bx_{k}^{i}-\by_{k}^{i}\right\Vert ^{2}+\eta_{k}^{2}\left\Vert \nabla f_{\pi_{k}^{i}}(\bx_{k}^{i})-\nabla f_{\pi_{k}^{i}}(\bx_{\star})\right\Vert ^{2}+2\eta_{k}\left(\Breg_{\pi_{k}^{i}}(\by_{k}^{i},\bx_{\star})-\Breg_{\pi_{k}^{i}}(\bx_{k}^{i},\bx_{\star})\right)\\
\overset{(b)}{\leq} & \left\Vert \bx_{k}^{i}-\by_{k}^{i}\right\Vert ^{2}+2\left(\eta_{k}^{2}L_{\pi_{k}^{i}}-\eta_{k}\right)\Breg_{\pi_{k}^{i}}(\bx_{k}^{i},\bx_{\star})+2\eta_{k}\Breg_{\pi_{k}^{i}}(\by_{k}^{i},\bx_{\star})\\
\overset{(c)}{\leq} & \left\Vert \bx_{k}^{i}-\by_{k}^{i}\right\Vert ^{2}-\eta_{k}\Breg_{\pi_{k}^{i}}(\bx_{k}^{i},\bx_{\star})+2\eta_{k}\Breg_{\pi_{k}^{i}}(\by_{k}^{i},\bx_{\star}),
\end{align*}
where we use $\Breg_{\pi_{k}^{i}}(\by_{k}^{i},\bx_{k}^{i})\geq0$
in $(a)$, apply Lemma \ref{lem:co-coercivity-cvx} to $f_{\pi_{k}^{i}}$
in $(b)$, and notice that $\eta_{k}\leq\frac{1}{2\Lmax}\Rightarrow\eta_{k}^{2}L_{\pi_{k}^{i}}\leq\frac{\eta_{k}}{2}$
in $(c)$. Finally, we sum the above inequality from $i=1$ to $n$,
use $\bx_{k+1}^{1}=\bx_{k}^{n+1}$ and $\by_{k}^{n+1}=\by_{k}^{1}=\bx_{\star}$
(see (\ref{eq:main-virtual-y-prop})), and rearrange terms to complete
the proof.
\end{proof}

\subsection{Proof of Lemma \ref{lem:II-core}}

\begin{proof}
Expanding the definitions of $\Breg$ and $\Breg_{\pi_{k}^{i}}$,
we know
\begin{align*}
\Breg(\bx_{k}^{i},\bx_{\star})-\Breg_{\pi_{k}^{i}}(\bx_{k}^{i},\bx_{\star})= & f(\bx_{k}^{i})-f(\bx_{\star})-\left\langle \nabla f(\bx_{\star}),\bx_{k}^{i}-\bx_{\star}\right\rangle \\
 & -\left(f_{\pi_{k}^{i}}(\bx_{k}^{i})-f_{\pi_{k}^{i}}(\bx_{\star})-\left\langle \nabla f_{\pi_{k}^{i}}(\bx_{\star}),\bx_{k}^{i}-\bx_{\star}\right\rangle \right)\\
= & f(\bx_{k}^{i})-f_{\pi_{k}^{i}}(\bx_{k}^{i})-f(\bx_{\star})+f_{\pi_{k}^{i}}(\bx_{\star})\\
 & -\left\langle \nabla f(\bx_{\star})-\nabla f_{\pi_{k}^{i}}(\bx_{\star}),\bx_{k}^{i}-\bx_{\star}\right\rangle .
\end{align*}
Since $\E\left[f_{\pi_{k}^{i}}(\bx_{\star})\right]=f(\bx_{\star})$
and $\E\left[\nabla f_{\pi_{k}^{i}}(\bx_{\star})\right]=\nabla f(\bx_{\star})$
under $\RR$, after taking expectations on both sides, we obtain
\begin{equation}
\E\left[\Breg(\bx_{k}^{i},\bx_{\star})-\Breg_{\pi_{k}^{i}}(\bx_{k}^{i},\bx_{\star})\right]=\E\left[f(\bx_{k}^{i})-f_{\pi_{k}^{i}}(\bx_{k}^{i})-\left\langle \nabla f(\bx_{\star})-\nabla f_{\pi_{k}^{i}}(\bx_{\star}),\bx_{k}^{i}\right\rangle \right].\label{eq:II-core-1}
\end{equation}
Now, we denote by $\ell_{i}(\bx)\defeq f_{i}(\bx)-\left\langle \nabla f_{i}(\bx_{\star}),\bx\right\rangle ,\forall i\in\left[n\right]$
and $\ell(\bx)\defeq\frac{1}{n}\sum_{i=1}^{n}\ell_{i}(\bx)=f(\bx)-\left\langle \nabla f(\bx_{\star}),\bx\right\rangle $.
Then, (\ref{eq:II-core-1}) implies that,
\begin{equation}
\E\left[\Breg(\bx_{k}^{i},\bx_{\star})-\Breg_{\pi_{k}^{i}}(\bx_{k}^{i},\bx_{\star})\right]=\E\left[\ell(\bx_{k}^{i})-\ell_{\pi_{k}^{i}}(\bx_{k}^{i})\right]\overset{(a)}{=}\frac{1}{n}\sum_{j<i}\E\left[\ell_{\pi_{k}^{i}}(\bx_{k}^{i}(i,j))-\ell_{\pi_{k}^{i}}(\bx_{k}^{i})\right],\label{eq:II-core-2}
\end{equation}
where $(a)$ holds by Lemma \ref{lem:core-function}.

Note that $\ell_{\pi_{k}^{i}}$ is $L_{\pi_{k}^{i}}$-smooth by its
definition and Assumption \ref{assu:smoothness}, we therefore have,
for any $j\in\left[i-1\right]$,
\begin{align}
\ell_{\pi_{k}^{i}}(\bx_{k}^{i}(i,j))-\ell_{\pi_{k}^{i}}(\bx_{k}^{i}) & \leq\left\langle \nabla\ell_{\pi_{k}^{i}}(\bx_{k}^{i}),\bx_{k}^{i}(i,j)-\bx_{k}^{i}\right\rangle +\frac{L_{\pi_{k}^{i}}}{2}\left\Vert \bx_{k}^{i}(i,j)-\bx_{k}^{i}\right\Vert ^{2}\nonumber \\
 & =\left\langle \nabla f_{\pi_{k}^{i}}(\bx_{k}^{i})-\nabla f_{\pi_{k}^{i}}(\bx_{\star}),\bx_{k}^{i}(i,j)-\bx_{k}^{i}\right\rangle +\frac{L_{\pi_{k}^{i}}}{2}\left\Vert \bx_{k}^{i}(i,j)-\bx_{k}^{i}\right\Vert ^{2}\nonumber \\
 & \overset{(b)}{\leq}\frac{\left\Vert \nabla f_{\pi_{k}^{i}}(\bx_{k}^{i})-\nabla f_{\pi_{k}^{i}}(\bx_{\star})\right\Vert ^{2}}{2L_{\pi_{k}^{i}}}+L_{\pi_{k}^{i}}\left\Vert \bx_{k}^{i}(i,j)-\bx_{k}^{i}\right\Vert ^{2}\nonumber \\
 & \overset{(c)}{\leq}\Breg_{\pi_{k}^{i}}(\bx_{k}^{i},\bx_{\star})+L_{\pi_{k}^{i}}\left\Vert \bx_{k}^{i}(i,j)-\bx_{k}^{i}\right\Vert ^{2}\nonumber \\
 & \overset{(d)}{\leq}\Breg_{\pi_{k}^{i}}(\bx_{k}^{i},\bx_{\star})+\eta_{k}^{2}L_{\pi_{k}^{i}}\left\Vert \nabla f_{\pi_{k}^{i}}(\bx_{k}^{j})-\nabla f_{\pi_{k}^{j}}(\bx_{k}^{j})\right\Vert ^{2},\label{eq:II-core-3}
\end{align}
where $(b)$ is by Cauchy-Schwarz inequality and AM-GM inequality,
$(c)$ is due to Lemma \ref{lem:co-coercivity-cvx}, and $(d)$ holds
by Lemma \ref{lem:core-stability}. Furthermore, we can bound
\begin{align*}
 & \left\Vert \nabla f_{\pi_{k}^{i}}(\bx_{k}^{j})-\nabla f_{\pi_{k}^{j}}(\bx_{k}^{j})\right\Vert ^{2}\\
\leq & 4\left\Vert \nabla f_{\pi_{k}^{i}}(\bx_{k}^{j})-\nabla f_{\pi_{k}^{i}}(\bx_{\star})\right\Vert ^{2}+4\left\Vert \nabla f_{\pi_{k}^{j}}(\bx_{k}^{j})-\nabla f_{\pi_{k}^{j}}(\bx_{\star})\right\Vert ^{2}+4\left\Vert \nabla f_{\pi_{k}^{i}}(\bx_{\star})\right\Vert ^{2}+4\left\Vert \nabla f_{\pi_{k}^{j}}(\bx_{\star})\right\Vert ^{2}\\
\leq & 8L_{\pi_{k}^{i}}\Breg_{\pi_{k}^{i}}(\bx_{k}^{j},\bx_{\star})+8L_{\pi_{k}^{j}}\Breg_{\pi_{k}^{j}}(\bx_{k}^{j},\bx_{\star})+4\left\Vert \nabla f_{\pi_{k}^{i}}(\bx_{\star})\right\Vert ^{2}+4\left\Vert \nabla f_{\pi_{k}^{j}}(\bx_{\star})\right\Vert ^{2},
\end{align*}
where the last step is by Lemma \ref{lem:co-coercivity-cvx} again.
Plug the above inequality back into (\ref{eq:II-core-3}) and use
$\eta_{k}\le\frac{1}{2\Lmax}$ to have
\begin{align}
\ell_{\pi_{k}^{i}}(\bx_{k}^{i}(i,j))-\ell_{\pi_{k}^{i}}(\bx_{k}^{i})\leq & \Breg_{\pi_{k}^{i}}(\bx_{k}^{i},\bx_{\star})+2\Breg_{\pi_{k}^{i}}(\bx_{k}^{j},\bx_{\star})+2\Breg_{\pi_{k}^{j}}(\bx_{k}^{j},\bx_{\star})\nonumber \\
 & +4\eta_{k}^{2}L_{\pi_{k}^{i}}\left\Vert \nabla f_{\pi_{k}^{i}}(\bx_{\star})\right\Vert ^{2}+4\eta_{k}^{2}L_{\pi_{k}^{i}}\left\Vert \nabla f_{\pi_{k}^{j}}(\bx_{\star})\right\Vert ^{2}.\label{eq:II-core-4}
\end{align}

Combine (\ref{eq:II-core-2}) and (\ref{eq:II-core-4}) to obtain
\begin{align*}
\E\left[\Breg(\bx_{k}^{i},\bx_{\star})-\Breg_{\pi_{k}^{i}}(\bx_{k}^{i},\bx_{\star})\right]\leq & \frac{1}{n}\sum_{j<i}\E\left[\Breg_{\pi_{k}^{i}}(\bx_{k}^{i},\bx_{\star})+2\Breg_{\pi_{k}^{i}}(\bx_{k}^{j},\bx_{\star})+2\Breg_{\pi_{k}^{j}}(\bx_{k}^{j},\bx_{\star})\right]\\
 & +\frac{4\eta_{k}^{2}}{n}\sum_{j<i}\E\left[L_{\pi_{k}^{i}}\left\Vert \nabla f_{\pi_{k}^{i}}(\bx_{\star})\right\Vert ^{2}+L_{\pi_{k}^{i}}\left\Vert \nabla f_{\pi_{k}^{j}}(\bx_{\star})\right\Vert ^{2}\right].
\end{align*}
When $j<i$, we observe that $\E\left[\Breg_{\pi_{k}^{i}}(\bx_{k}^{j},\bx_{\star})\right]=\E\left[\Breg_{\pi_{k}^{j}}(\bx_{k}^{j},\bx_{\star})\right]$
and the following two inequalities hold
\begin{align*}
\E\left[L_{\pi_{k}^{i}}\left\Vert \nabla f_{\pi_{k}^{i}}(\bx_{\star})\right\Vert ^{2}\right] & =\frac{\sum_{l=1}^{n}L_{l}\left\Vert \nabla f_{l}(\bx_{\star})\right\Vert ^{2}}{n}\leq\Lmax\sigma_{\star}^{2}\leq n\Lavg\sigma_{\star}^{2},\\
\E\left[L_{\pi_{k}^{i}}\left\Vert \nabla f_{\pi_{k}^{j}}(\bx_{\star})\right\Vert ^{2}\right] & =\frac{\sum_{l=1}^{n}\frac{n\Lavg-L_{l}}{n-1}\left\Vert \nabla f_{l}(\bx_{\star})\right\Vert ^{2}}{n}\leq2\Lavg\sigma_{\star}^{2}.
\end{align*}
Put everything together, rearrange terms, and use $1\leq n$ to conclude
the desired inequality.
\end{proof}

\subsection{Proof of Lemma \ref{lem:II-residual}}

\begin{proof}
By smoothness (i.e., Assumption \ref{assu:smoothness}), we have
\[
\Breg_{\pi_{k}^{i}}(\by_{k}^{i},\bx_{\star})\leq\frac{L_{\pi_{k}^{i}}}{2}\left\Vert \by_{k}^{i}-\bx_{\star}\right\Vert ^{2}\overset{(\ref{eq:main-virtual-y})}{=}\frac{L_{\pi_{k}^{i}}}{2}\left\Vert \by_{k}^{i}-\by_{k}^{1}\right\Vert ^{2}\overset{(\ref{eq:main-virtual-y})}{=}\frac{\eta_{k}^{2}L_{\pi_{k}^{i}}}{2}\left\Vert \sum_{j=1}^{i-1}\nabla f_{\pi_{k}^{j}}(\bx_{\star})\right\Vert ^{2}.
\]
Therefore, we can bound 
\[
\E\left[\sum_{i=1}^{n}\Breg_{\pi_{k}^{i}}(\by_{k}^{i},\bx_{\star})\right]\leq\frac{\eta_{k}^{2}}{2}\E\left[\sum_{i=1}^{n}L_{\pi_{k}^{i}}\left\Vert \sum_{j=1}^{i-1}\nabla f_{\pi_{k}^{j}}(\bx_{\star})\right\Vert ^{2}\right]\leq\frac{\eta_{k}^{2}n^{2}\Lavg\sigma_{\star}^{2}}{6},
\]
where the last step is due to Lemma E.1 of \citet{pmlr-v235-liu24cg}
(the constant here is slightly better, since $\nabla f(\bx_{\star})=\bzero$
in our setting leads to a provable improvement).
\end{proof}

%% file: main-COLT.bbl
\begin{thebibliography}{30}
\providecommand{\natexlab}[1]{#1}
\providecommand{\url}[1]{\texttt{#1}}
\expandafter\ifx\csname urlstyle\endcsname\relax
  \providecommand{\doi}[1]{doi: #1}\else
  \providecommand{\doi}{doi: \begingroup \urlstyle{rm}\Url}\fi

\bibitem[Ahn et~al.(2020)Ahn, Yun, and Sra]{NEURIPS2020_cb8acb1d}
Kwangjun Ahn, Chulhee Yun, and Suvrit Sra.
\newblock Sgd with shuffling: optimal rates without component convexity and large epoch requirements.
\newblock In H.~Larochelle, M.~Ranzato, R.~Hadsell, M.F. Balcan, and H.~Lin, editors, \emph{Advances in Neural Information Processing Systems}, volume~33, pages 17526--17535. Curran Associates, Inc., 2020.
\newblock URL \url{https://proceedings.neurips.cc/paper_files/paper/2020/file/cb8acb1dc9821bf74e6ca9068032d623-Paper.pdf}.

\bibitem[Bengio(2012)]{Bengio2012}
Yoshua Bengio.
\newblock \emph{Practical Recommendations for Gradient-Based Training of Deep Architectures}, pages 437--478.
\newblock Springer Berlin Heidelberg, Berlin, Heidelberg, 2012.
\newblock ISBN 978-3-642-35289-8.
\newblock \doi{10.1007/978-3-642-35289-8_26}.
\newblock URL \url{https://doi.org/10.1007/978-3-642-35289-8_26}.

\bibitem[Bottou(2009)]{bottou2009curiously}
L{\'e}on Bottou.
\newblock Curiously fast convergence of some stochastic gradient descent algorithms.
\newblock In \emph{Proceedings of the symposium on learning and data science, Paris}, volume~8, pages 2624--2633. Citeseer, 2009.

\bibitem[Bottou(2012)]{Bottou2012}
L{\'e}on Bottou.
\newblock \emph{Stochastic Gradient Descent Tricks}, pages 421--436.
\newblock Springer Berlin Heidelberg, Berlin, Heidelberg, 2012.
\newblock ISBN 978-3-642-35289-8.
\newblock \doi{10.1007/978-3-642-35289-8_25}.
\newblock URL \url{https://doi.org/10.1007/978-3-642-35289-8_25}.

\bibitem[Bottou et~al.(2018)Bottou, Curtis, and Nocedal]{doi:10.1137/16M1080173}
L\'{e}on Bottou, Frank~E. Curtis, and Jorge Nocedal.
\newblock Optimization methods for large-scale machine learning.
\newblock \emph{SIAM Review}, 60\penalty0 (2):\penalty0 223--311, 2018.
\newblock \doi{10.1137/16M1080173}.
\newblock URL \url{https://doi.org/10.1137/16M1080173}.

\bibitem[Cai and Diakonikolas(2025)]{ICLR2025_fea9f93f}
Xufeng Cai and Jelena Diakonikolas.
\newblock Last iterate convergence of incremental methods as a model of forgetting.
\newblock In Y.~Yue, A.~Garg, N.~Peng, F.~Sha, and R.~Yu, editors, \emph{International Conference on Learning Representations}, volume 2025, pages 102613--102647, 2025.
\newblock URL \url{https://proceedings.iclr.cc/paper_files/paper/2025/file/fea9f93f4cec99f65a8b4d575fc353a8-Paper-Conference.pdf}.

\bibitem[Cai et~al.(2024)Cai, Lin, and Diakonikolas]{NEURIPS2024_84d39572}
Xufeng Cai, Cheuk~Yin Lin, and Jelena Diakonikolas.
\newblock Tighter convergence bounds for shuffled sgd via primal-dual perspective.
\newblock In A.~Globerson, L.~Mackey, D.~Belgrave, A.~Fan, U.~Paquet, J.~Tomczak, and C.~Zhang, editors, \emph{Advances in Neural Information Processing Systems}, volume~37, pages 72475--72524. Curran Associates, Inc., 2024.
\newblock \doi{10.52202/079017-2310}.
\newblock URL \url{https://proceedings.neurips.cc/paper_files/paper/2024/file/84d395725a9b40cb4a49d84478ac24c7-Paper-Conference.pdf}.

\bibitem[Cha et~al.(2023)Cha, Lee, and Yun]{pmlr-v202-cha23a}
Jaeyoung Cha, Jaewook Lee, and Chulhee Yun.
\newblock Tighter lower bounds for shuffling {SGD}: Random permutations and beyond.
\newblock In Andreas Krause, Emma Brunskill, Kyunghyun Cho, Barbara Engelhardt, Sivan Sabato, and Jonathan Scarlett, editors, \emph{Proceedings of the 40th International Conference on Machine Learning}, volume 202 of \emph{Proceedings of Machine Learning Research}, pages 3855--3912. PMLR, 23--29 Jul 2023.
\newblock URL \url{https://proceedings.mlr.press/v202/cha23a.html}.

\bibitem[Garrigos and Gower(2023)]{garrigos2023handbook}
Guillaume Garrigos and Robert~M Gower.
\newblock Handbook of convergence theorems for (stochastic) gradient methods.
\newblock \emph{arXiv preprint arXiv:2301.11235}, 2023.

\bibitem[G{\"u}rb{\"u}zbalaban et~al.(2021)G{\"u}rb{\"u}zbalaban, Ozdaglar, and Parrilo]{gurbuzbalaban2021random}
Mert G{\"u}rb{\"u}zbalaban, Asu Ozdaglar, and Pablo~A Parrilo.
\newblock Why random reshuffling beats stochastic gradient descent.
\newblock \emph{Mathematical Programming}, 186:\penalty0 49--84, 2021.

\bibitem[Haochen and Sra(2019)]{pmlr-v97-haochen19a}
Jeff Haochen and Suvrit Sra.
\newblock Random shuffling beats {SGD} after finite epochs.
\newblock In Kamalika Chaudhuri and Ruslan Salakhutdinov, editors, \emph{Proceedings of the 36th International Conference on Machine Learning}, volume~97 of \emph{Proceedings of Machine Learning Research}, pages 2624--2633. PMLR, 09--15 Jun 2019.
\newblock URL \url{https://proceedings.mlr.press/v97/haochen19a.html}.

\bibitem[Kibardin(1979)]{kibardin1979decomposition}
Vladimir Kibardin.
\newblock Decomposition into functions in the minimization problem.
\newblock \emph{Automation and Remote Control}, 1979, 01 1979.

\bibitem[Koren et~al.(2022)Koren, Livni, Mansour, and Sherman]{NEURIPS2022_7bc4f74e}
Tomer Koren, Roi Livni, Yishay Mansour, and Uri Sherman.
\newblock Benign underfitting of stochastic gradient descent.
\newblock In S.~Koyejo, S.~Mohamed, A.~Agarwal, D.~Belgrave, K.~Cho, and A.~Oh, editors, \emph{Advances in Neural Information Processing Systems}, volume~35, pages 19605--19617. Curran Associates, Inc., 2022.
\newblock URL \url{https://proceedings.neurips.cc/paper_files/paper/2022/file/7bc4f74e35bcfe8cfe43b0a860786d6a-Paper-Conference.pdf}.

\bibitem[Lan(2020)]{lan2020first}
Guanghui Lan.
\newblock \emph{First-order and stochastic optimization methods for machine learning}.
\newblock Springer, 2020.

\bibitem[Liu and Zhou(2024)]{pmlr-v235-liu24cg}
Zijian Liu and Zhengyuan Zhou.
\newblock On the last-iterate convergence of shuffling gradient methods.
\newblock In Ruslan Salakhutdinov, Zico Kolter, Katherine Heller, Adrian Weller, Nuria Oliver, Jonathan Scarlett, and Felix Berkenkamp, editors, \emph{Proceedings of the 41st International Conference on Machine Learning}, volume 235 of \emph{Proceedings of Machine Learning Research}, pages 32471--32508. PMLR, 21--27 Jul 2024.
\newblock URL \url{https://proceedings.mlr.press/v235/liu24cg.html}.

\bibitem[Liu and Zhou(2025)]{pmlr-v267-liu25ct}
Zijian Liu and Zhengyuan Zhou.
\newblock Improved last-iterate convergence of shuffling gradient methods for nonsmooth convex optimization.
\newblock In Aarti Singh, Maryam Fazel, Daniel Hsu, Simon Lacoste-Julien, Felix Berkenkamp, Tegan Maharaj, Kiri Wagstaff, and Jerry Zhu, editors, \emph{Proceedings of the 42nd International Conference on Machine Learning}, volume 267 of \emph{Proceedings of Machine Learning Research}, pages 40152--40193. PMLR, 13--19 Jul 2025.
\newblock URL \url{https://proceedings.mlr.press/v267/liu25ct.html}.

\bibitem[Mishchenko et~al.(2020)Mishchenko, Khaled, and Richtarik]{NEURIPS2020_c8cc6e90}
Konstantin Mishchenko, Ahmed Khaled, and Peter Richtarik.
\newblock Random reshuffling: Simple analysis with vast improvements.
\newblock In H.~Larochelle, M.~Ranzato, R.~Hadsell, M.F. Balcan, and H.~Lin, editors, \emph{Advances in Neural Information Processing Systems}, volume~33, pages 17309--17320. Curran Associates, Inc., 2020.
\newblock URL \url{https://proceedings.neurips.cc/paper_files/paper/2020/file/c8cc6e90ccbff44c9cee23611711cdc4-Paper.pdf}.

\bibitem[Nagaraj et~al.(2019)Nagaraj, Jain, and Netrapalli]{pmlr-v97-nagaraj19a}
Dheeraj Nagaraj, Prateek Jain, and Praneeth Netrapalli.
\newblock {SGD} without replacement: Sharper rates for general smooth convex functions.
\newblock In Kamalika Chaudhuri and Ruslan Salakhutdinov, editors, \emph{Proceedings of the 36th International Conference on Machine Learning}, volume~97 of \emph{Proceedings of Machine Learning Research}, pages 4703--4711. PMLR, 09--15 Jun 2019.
\newblock URL \url{https://proceedings.mlr.press/v97/nagaraj19a.html}.

\bibitem[Nedic and Bertsekas(2001)]{doi:10.1137/S1052623499362111}
Angelia Nedic and Dimitri~P. Bertsekas.
\newblock Incremental subgradient methods for nondifferentiable optimization.
\newblock \emph{SIAM Journal on Optimization}, 12\penalty0 (1):\penalty0 109--138, 2001.
\newblock \doi{10.1137/S1052623499362111}.
\newblock URL \url{https://doi.org/10.1137/S1052623499362111}.

\bibitem[Nesterov et~al.(2018)]{nesterov2018lectures}
Yurii Nesterov et~al.
\newblock \emph{Lectures on convex optimization}, volume 137.
\newblock Springer, 2018.

\bibitem[Nguyen et~al.(2021)Nguyen, Tran-Dinh, Phan, Nguyen, and van Dijk]{JMLR:v22:20-1238}
Lam~M. Nguyen, Quoc Tran-Dinh, Dzung~T. Phan, Phuong~Ha Nguyen, and Marten van Dijk.
\newblock A unified convergence analysis for shuffling-type gradient methods.
\newblock \emph{Journal of Machine Learning Research}, 22\penalty0 (207):\penalty0 1--44, 2021.
\newblock URL \url{http://jmlr.org/papers/v22/20-1238.html}.

\bibitem[Polyak(1987)]{Solr-KOHA-OAI-TEST:19722}
Boris~T. Polyak.
\newblock \emph{Introduction to optimization}.
\newblock New York, Optimization Software, 1987.

\bibitem[Rajput et~al.(2020)Rajput, Gupta, and Papailiopoulos]{pmlr-v119-rajput20a}
Shashank Rajput, Anant Gupta, and Dimitris Papailiopoulos.
\newblock Closing the convergence gap of {SGD} without replacement.
\newblock In Hal~Daumé III and Aarti Singh, editors, \emph{Proceedings of the 37th International Conference on Machine Learning}, volume 119 of \emph{Proceedings of Machine Learning Research}, pages 7964--7973. PMLR, 13--18 Jul 2020.
\newblock URL \url{https://proceedings.mlr.press/v119/rajput20a.html}.

\bibitem[Rajput et~al.(2022)Rajput, Lee, and Papailiopoulos]{rajput2022permutationbased}
Shashank Rajput, Kangwook Lee, and Dimitris Papailiopoulos.
\newblock Permutation-based {SGD}: Is random optimal?
\newblock In \emph{International Conference on Learning Representations}, 2022.
\newblock URL \url{https://openreview.net/forum?id=YiBa9HKTyXE}.

\bibitem[Robbins and Monro(1951)]{10.1214/aoms/1177729586}
Herbert Robbins and Sutton Monro.
\newblock {A Stochastic Approximation Method}.
\newblock \emph{The Annals of Mathematical Statistics}, 22\penalty0 (3):\penalty0 400 -- 407, 1951.
\newblock \doi{10.1214/aoms/1177729586}.
\newblock URL \url{https://doi.org/10.1214/aoms/1177729586}.

\bibitem[Safran and Shamir(2020)]{pmlr-v125-safran20a}
Itay Safran and Ohad Shamir.
\newblock How good is sgd with random shuffling?
\newblock In Jacob Abernethy and Shivani Agarwal, editors, \emph{Proceedings of Thirty Third Conference on Learning Theory}, volume 125 of \emph{Proceedings of Machine Learning Research}, pages 3250--3284. PMLR, 09--12 Jul 2020.
\newblock URL \url{https://proceedings.mlr.press/v125/safran20a.html}.

\bibitem[Safran and Shamir(2021)]{NEURIPS2021_803ef568}
Itay Safran and Ohad Shamir.
\newblock Random shuffling beats sgd only after many epochs on ill-conditioned problems.
\newblock In M.~Ranzato, A.~Beygelzimer, Y.~Dauphin, P.S. Liang, and J.~Wortman Vaughan, editors, \emph{Advances in Neural Information Processing Systems}, volume~34, pages 15151--15161. Curran Associates, Inc., 2021.
\newblock URL \url{https://proceedings.neurips.cc/paper_files/paper/2021/file/803ef56843860e4a48fc4cdb3065e8ce-Paper.pdf}.

\bibitem[Shamir(2016)]{NIPS2016_c74d97b0}
Ohad Shamir.
\newblock Without-replacement sampling for stochastic gradient methods.
\newblock In D.~Lee, M.~Sugiyama, U.~Luxburg, I.~Guyon, and R.~Garnett, editors, \emph{Advances in Neural Information Processing Systems}, volume~29. Curran Associates, Inc., 2016.
\newblock URL \url{https://proceedings.neurips.cc/paper_files/paper/2016/file/c74d97b01eae257e44aa9d5bade97baf-Paper.pdf}.

\bibitem[Sherman et~al.(2021)Sherman, Koren, and Mansour]{NEURIPS2021_107030ca}
Uri Sherman, Tomer Koren, and Yishay Mansour.
\newblock Optimal rates for random order online optimization.
\newblock In M.~Ranzato, A.~Beygelzimer, Y.~Dauphin, P.S. Liang, and J.~Wortman Vaughan, editors, \emph{Advances in Neural Information Processing Systems}, volume~34, pages 2097--2108. Curran Associates, Inc., 2021.
\newblock URL \url{https://proceedings.neurips.cc/paper_files/paper/2021/file/107030ca685076c0ed5e054e2c3ed940-Paper.pdf}.

\bibitem[Ying et~al.(2019)Ying, Yuan, Vlaski, and Sayed]{8514028}
Bicheng Ying, Kun Yuan, Stefan Vlaski, and Ali~H. Sayed.
\newblock Stochastic learning under random reshuffling with constant step-sizes.
\newblock \emph{IEEE Transactions on Signal Processing}, 67\penalty0 (2):\penalty0 474--489, 2019.
\newblock \doi{10.1109/TSP.2018.2878551}.

\end{thebibliography}
